\newcommand{\avg}[1]{\langle #1 \rangle}
\def\R{{\mathbb R}}
\def\ls{\lesssim}
\newtheorem{thm}{Theorem}[section]
\newtheorem{lem}[thm]{Lemma}%[section]
\newtheorem{prop}[thm]{Proposition}%[section]
\newtheorem{cor}[thm]{Corollary}%[section]
\newtheorem{defn}[thm]{Definition}%[section]
\numberwithin{equation}{section}
\begin{document}

\arraycolsep=1pt

\title{\Large\bf   Commutators of multi-parameter flag singular integrals\\ and applications
}
\author{Xuan Thinh Duong, Ji Li, Yumeng Ou, Jill Pipher and Brett D. Wick}

%\medskip
\date{}
\maketitle

\begin{center}
\begin{minipage}{13.5cm}\small

{\noindent  {\bf Abstract:}\  
We introduce the iterated commutator for the Riesz transforms in the multi-parameter flag setting, and prove the upper bound of this commutator with respect to the symbol $b$ in the flag BMO space. Our methods require the  techniques of semigroups, harmonic functions and multi-parameter flag Littlewood--Paley analysis. We also introduce the {\it big} commutator in this multi-parameter flag setting and prove the upper bound with symbol $b$ in the flag little-bmo space by establishing the ``exponential--logarithmic'' bridge between this flag little bmo space and the Muckenhoupt $A_p$ weights with flag structure. As an application, we establish the div-curl lemmas with respect to the appropriate Hardy spaces in the multi-parameter flag setting.
}

\end{minipage}
\end{center}

%\bigskip
%\tableofcontents
%
%
%\bigskip
\bigskip

{ {\it Keywords}: multiparameter flag setting; flag commutator; Hardy and BMO space; div-curl lemma}

\medskip

{{Mathematics Subject Classification 2010:} {42B30, 42B20, 42B35}}

\section{Introduction and statement of main results}
\setcounter{equation}{0}

The Calder\'{o}n--Zygmund theory of singular integrals has been central to the success and applicability of modern harmonic analysis in the last fifty years.
This theory has had extensive applications to other fields of mathematics such as complex analysis, geometric measure theory and partial differential equations. 
In the setting of Euclidean spaces $\mathbb R^n$, a notable property of 
standard Calder\'{o}n--Zygmund singular integrals, shared with
the Hardy--Littlewood maximal operator, is that these operators commute with
the classical one-parameter family of dilations on $\mathbb{R}^{n}$, $\delta \cdot x=(\delta
x_{1},\ldots,\delta x_{n})$ for $\delta >0$. See for example the monograph \cite{s93}.  

The product Calder\'{o}n--Zygmund theory in harmonic analysis was introduced in the 70s, and studied extensively since then.
The model case is a tensor product of classical singular integral operators; such operators arise in the context of questions about summation of
multiple variable Fourier series.
%The Product Calder\'{o}n--Zygmund theory was introduced and intensively studied, for
Early key work in this field includes that of Chang, C. Fefferman, R. Fefferman, Gundy,  Journ\'{e}, Stein \cite{GS, FeS,
F1, F2, F3, CF1,CF2,CF3, J1, P}. Included in these works are the identification of appropriate notions of  product  ${\rm BMO}$ and
product Hardy space $H^{p}\left( \mathbb{R}^{n}\times \mathbb{R}
^{m}\right) $.  

More recently, the theory of (iterated) commutators has been developed in connection with the
Chang--Fefferman BMO space, including paraproducts and multi-parameter div-curl lemmas; see, for example,
\cite{DO,FL,FS,LPPW,LPPW2,LPPW3,LT}.
In contrast with the classical Euclidean setting, the product
Calder\'{o}n--Zygmund singular integrals, and the \emph{strong} maximal function operator, commute with the
multi-parameter dilations on $\mathbb{R}^{n}$, $\delta \cdot x=(\delta
_{1}x_{1},\ldots,\delta _{n}x_{n})$ for $\delta =(\delta _{1},\ldots,\delta
_{n})\in (0,\infty)^{n}$.

 %However, in these works the underlying multi-parameter structure is explicit.

%When the underlying multi-parameter structure is not explicit, but only \emph{implicit} an appropriate $L^p$ theory, with $1<p<\infty$, has only recently been developed.  To be precise, 

A new type of multi-parameter structure, which lies in between one-parameter and tensor product,
was introduced by Muller, Ricci and Stein
in \cite{MRS} and \cite{MRS2}, where they studied the $L^p$ boundedness of Marcinkiewicz multipliers $m(\mathcal L, iT)$
on Heisenberg group, where $\mathcal L$ is the sub-Laplacian and $T$ is the central invariant vector field, with $m$ being a multiplier of Marcinkiewicz-type. They showed that such Marcinkiewicz multipliers can be characterized by
a convolution operator  $f\ast K$ where $K$ is a so-called {\it flag} convolution kernel.
This multi-parameter flag structure is not explicit, but only \emph{implicit} in the sense that one can not formulate it in terms of an explicit dilation $\delta$ acting on $x$.
Later,  the notion of flag kernels (having singularities on appropriate flag varieties) and the properties of the corresponding singular integrals were then extended to the higher step case in Nagel, Ricci and Stein \cite{NRS} on Euclidean space and their applications
on certain quadratic CR submanifolds
of ${\Bbb{C}}^n$. Recently, Nagel, Ricci, Stein and Wainger \cite{NRSW1, NRSW2}  established the
theory of singular integrals with flag kernels in a more general setting of homogeneous groups.
They proved that, on a homogeneous group, singular integral operators with flag kernels are bounded on  $L^p,1<p<\infty,$ and form an algebra.
(See also \cite{G2, G3} for related work.) Associated to this implicit multi-parameter flag structure,
the Hardy space $H^1_{\mathcal{F}}(\R^n\times\R^m)$ and BMO space ${\rm BMO}_{\mathcal{F}}(\R^n\times\R^m)$  were introduced by Han, Lu and Sawyer \cite{HL,HLS} 
through their creation of a flag type Littlewood--Paley theory. More recently, Han, Lee, and the second and fifth 
authors \cite{HLLW} established a full characterization of $H^1_{\mathcal{F}}(\R^n\times\R^m)$ via appropriate flag type non-tangential, radial maximal functions,
Littlewood--Paley theory via Poisson integrals, the flag type Riesz transforms, as well as flag atomic decompositions.

{{In the multi-parameter setting, the dilation structure $\delta \cdot x=(\delta
_{1}x_{1},\ldots,\delta _{n}x_{n})$, for $\delta :=(\delta _{1},\ldots,\delta
_{n})\in (0,\infty)^{n}$, determines a geometry that is reflected by axes-parallel rectangles of arbitrary side-lengths. Indeed, the strong maximal function is defined as the supremum of averages over such rectangles, and the Chang--Fefferman product BMO space can also be characterized using such rectangles. When it comes to the flag setting, the lack of an explicit dilation structure makes its geometry much more obscure. However, from the study of properties of the flag singular integrals, such as the flag Riesz transforms that will be introduced below, one realizes that the flag geometry can be reflected by axes-parallel rectangles with certain restriction on the side-lengths. For example, the flag rectangles in $\mathbb{R}^n\times \mathbb{R}^m$ are the ones of the form $R=I\times J\subset \mathbb{R}^n\times \mathbb{R}^m$ with $\ell(I)\leq \ell(J)$. Compared to the multi-parameter setting, the restriction $\ell(I)\leq \ell(J)$ gives rise to new difficulties. For instance, a very useful trick in the study of problems in the multi-parameter setting is to take a sequence of rectangles $\{I\times J_i\}$ and let $J_i$ shrink to a point $y_0$ as $i\to \infty$. This can usually effectively reduce the problem to one-parameter. However, in the flag setting, such operation is not allowed any more. Other intrinsic difficulties of the flag setting can be better described from the analytic perspective, which will be discussed below.}}

%We now recall the flag Hardy space on $\mathbb{R}^n\times\mathbb{R}^m$, which was first introduced in \cite{HL} and has been studied intensively recently in \cite{HLLW} where they established the equivalent characterisations via  non-tangential and radial maximal functions as well as via the flag Riesz transforms. Here we just recall the definition via  Littlewood--Paley area functions.

A commutator of a classical Cald\'eron--Zygmund singular integral with a ${\rm BMO}$ function is a bounded operator on $L^p$ with norm equivalent to the ${\rm BMO}$ norm of the symbol (\cite{crw}).
Modern methods of proving the upper bound of these commutators in the multi-parameter product setting rely upon the existence of a wavelet basis for $L^2(\R^n)$, such as the Meyer wavelets or Haar wavelets, see for example \cite{LPPW,DO}.  It turns out that the behavior of the commutator is straightforward to analyze in terms of the wavelet basis.  One method of proof shows that the commutator can be written as a linear combination of paraproducts and simple wavelet analogs of the Calder\'on--Zygmund operator in question.  The other approach uses the wavelet basis to dominate the commutator by a composition of sparse operators.  In the flag setting, we lack a suitable wavelet basis and this approach is not available. Essentially, the wavelet basis  requires the construction of a suitable multi-resolution analysis, which we do not have in this flag setting. Hence, instead of the wavelet basis, we resort to using a method based on heat semi-groups and flag type Littlewood--Paley theory, exploiting the connection between the Reisz transforms and the Laplacian.

We now recall the flag Riesz transforms as studied in \cite{HLLW}. We use $R^{(1)}_j$ to denote the $j$-th Riesz transform on $\R^{n+m}$, $j=1,2,\ldots,n+m$, and we use $R^{(2)}_k$ to denote the $k$-th Riesz transform on $\R^m$, $k=1,2,\ldots,m$.  Namely, we have that for $g^{(1)}\in L^2(\R^{n+m})$,
$$
R^{(1)}_j g^{(1)}(x)={\rm p.v.}\ c_{n+m}\int_{\mathbb{R}^{n+m}} \frac{x_j-y_j}{\left\vert x-y \right\vert^{n+m+1}} g^{(1)}(y)dy,\quad x\in\mathbb R^{n+m}; $$
\text{ and } for $g^{(2)}\in L^2(\R^{m})$,
$$R^{(2)}_k g^{(2)}(z)={\rm p.v.}\ c_m\int_{\mathbb{R}^m} \frac{w_j-z_j}{\left\vert w-z \right\vert^{m+1}} g^{(2)}(w)dw, \quad z\in\mathbb R^{m}.
$$
For $f\in L^2(\mathbb{R}^{n+m})$, we set 
\begin{align}\label{flag Riesz}
R_{j,k}(f)=R^{(1)}_j\ast R^{(2)}_k \ast_2 f,
\end{align}
that is, $R_{j,k}$ is the composition of $R^{(1)}_j$ and $R^{(2)}_k.$ Note that the flag structure appears in $R_{j,k}.$

Given two functions $b, f\in L^2(\mathbb{R}^{n+m})$, 
we first recall the usual definition of commutator
\begin{align}
[b,R_j^{(1)}](f)(x_1,x_2) := b(x_1,x_2) R_j^{(1)} \ast f (x_1,x_2) - R_j^{(1)} \ast (bf)(x_1,x_2).
\end{align}
The commutator can also act only on the second variable:
\begin{align}
[b,R_k^{(2)}]_{2}(f)(x_1,x_2):= b(x_1,x_2) R_k^{(2)} \ast_2 f (x_1,x_2) - R_k^{(2)} \ast_2 (bf)(x_1,x_2).
\end{align}
Iterated commutators arise in the study of commutators of multi-parameter singular integral operators which are 
tensor products. In the flag setting, our iterated commutator takes the following form:
%:
%given two functions $b\in {\rm BMO}_F^1(\mathbb{R}^n\times\mathbb{R}^m)$ and $f\in L^2(\mathbb{R}^{n+m})$, 
%the  iterated commutator in the flag setting of $\R^n\times\R^m$ is defined as 
%\begin{align}
%[[b,R_j],H]_2(f). %\quad\quad {\rm and}\quad \quad [[b,H]_2,R_j](f)
%\end{align}
\begin{defn}\label{def-of-flag-commutator}
Given two functions $b, f\in L^2(\mathbb{R}^{n+m})$, the iterated commutator in the flag setting of $\R^n\times\R^m$ is
\begin{align*}
[[b,R_j^{(1)}],R^{(2)}_k]_2(f) &:= b(x_1,x_2) R_j^{(1)} \ast R^{(2)}_k\ast_2 f(x_1,x_2) - R_j^{(1)} \ast (b\cdot R^{(2)}_k\ast_2 f) (x_1,x_2)\\
&\quad - R^{(2)}_k \ast_2 \big( b\cdot R_j^{(1)} \ast f\big) (x_1,x_2) + R^{(2)}_k\ast_2 R_j^{(1)}\ast (b\cdot f) (x_1,x_2).
\end{align*}
\end{defn}
We point out that another possible definition via $[[b,R^{(2)}_k]_2,R_j^{(1)}](f)$ turns out to be equivalent; see Proposition \ref{prop equi com} in Section 2.

We also introduce the {\it big} commutator in the flag setting as 
follows.
\begin{defn}\label{def-of-big-commutator}
Given two functions $b, f\in L^2(\mathbb{R}^{n+m})$, the big commutator in the flag setting of $\R^n\times\R^m$ is
\begin{align}
[b,R_{j,k}](f)(x):= b(x) R_{j,k}(f)(x) - R_{j,k}(bf)(x).
\end{align}
\end{defn}

The main results, below, of this paper relate iterated and big commutator bounds to flag ${\rm BMO}$ spaces. As the definition of the space ${\rm BMO}_{\mathcal{F}}(\R^n\times \R^m)$ is very technical, we refer the reader to Section \ref{s2}, Definition \ref{def flag BMO} for details.
\begin{thm}\label{thm iterated com}
Suppose $b\in {\rm BMO}_{\mathcal{F}}(\R^n\times\R^m)$ and $1<p<\infty$. Then for every $j=1,\ldots,n+m$, $k=1,\ldots,m$, $f\in L^p(\R^{n+m})$,
\begin{align}
\|[[b,R_j^{(1)}],R^{(2)}_k]_2(f)\|_{L^p(\R^{n+m})}\ls \|b\|_{{\rm BMO}_{\mathcal{F}}(\mathbb{R}^{n}\times \mathbb{R}^{m})}\|f\|_{L^p(\R^{n+m})}.
\end{align}
\end{thm}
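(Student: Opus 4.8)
The plan is to reduce the iterated flag commutator $[[b,R_j^{(1)}],R_k^{(2)}]_2$ to a combination of objects that can be controlled by the flag Littlewood--Paley square function and the flag BMO norm of $b$, bypassing the wavelet machinery that is unavailable here. First I would record the key structural fact that $R_j^{(1)} = \partial_j (-\Delta_{n+m})^{-1/2}$ on $\mathbb{R}^{n+m}$ and $R_k^{(2)} = \partial_k (-\Delta_m)^{-1/2}$ acting on the second variable, and represent each negative fractional power through the heat semigroup via the subordination formula $(-\Delta)^{-1/2} = c\int_0^\infty e^{t\Delta}\,\frac{dt}{\sqrt{t}}$. This expresses $R_{j,k}$ as a ``flag'' convolution built from the two heat semigroups $e^{t_1\Delta_{n+m}}$ and $e^{t_2\Delta_m}$, which is exactly the object for which the flag Littlewood--Paley theory from \cite{HLLW} is designed. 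The commutator structure then produces, after the standard algebraic manipulation, an expression in which $b$ always appears sandwiched between heat/Poisson-type operators at two independent scales $t_1$ (full variable) and $t_2$ (second variable only), with $t_1 \gtrsim t_2$ forced by the flag constraint.

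The main body of the argument is then a paraproduct-type decomposition adapted to the flag setting. I would split, for each of the two scales, the symbol $b$ into a ``low-frequency'' piece (an average of $b$ against a heat kernel at the coarser scale) and a ``high-frequency'' piece (the complementary Littlewood--Paley block), obtaining four pieces analogous to the classical $\Pi_1,\Pi_2,\Pi_3,\Pi_4$ but now with the outer scale tied to $\mathbb{R}^{n+m}$ and the inner scale tied to $\mathbb{R}^m$, and crucially with $t_2 \le t_1$ throughout. For the pieces in which at least one factor of $b$ is hit by a Littlewood--Paley projection, one uses the $L^p$ boundedness of the flag square function (and its vector-valued/$g$-function variants) together with the Carleson-measure characterization of ${\rm BMO}_{\mathcal{F}}$ from Definition \ref{def flag BMO}: the measure $|Q^{[t_1]}_1 Q^{[t_2]}_2 b(x)|^2 \,\frac{dx\,dt_1\,dt_2}{t_1 t_2}$ (restricted to the flag region $t_2\le t_1$) is a flag Carleson measure with mass $\ls \|b\|_{{\rm BMO}_{\mathcal F}}^2$, and a flag-adapted Carleson embedding/duality then pairs this against the square function of $f$. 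For the remaining ``diagonal'' paraproduct piece, where both copies of $b$ sit at the coarse scale, one uses that the difference of the two averages of $b$ telescopes into an integral of Littlewood--Paley blocks of $b$ along the scale, reducing it to the previous case. The kernel regularity estimates needed (that the heat/Poisson derivatives obey the flag ``standard estimates'') are routine consequences of Gaussian bounds and should be quoted rather than rederived.

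The step I expect to be the main obstacle is handling the flag constraint $t_2 \le t_1$ correctly in the Carleson-measure argument. In the genuine product setting one would be free to let the second scale shrink independently, or to split the region into $t_1\le t_2$ and $t_2\le t_1$ symmetrically; here the analysis is intrinsically one-sided, and one must verify that the relevant square-function/Carleson pairings converge and are controlled \emph{only using blocks supported in the flag cone}. Concretely, the difficulty is that when $b$ is projected at the fine scale $t_2$ but not at the coarse scale $t_1$, the ``non-projected'' coarse average of $b$ over the full variable need not be close to $b$ pointwise, so one cannot naively absorb it; the resolution is to exploit that the operator acting on $f$ in that term is itself a Littlewood--Paley block at scale $t_1$, so one integrates by parts in $t_1$ to move a derivative onto the coarse average of $b$, converting it into a genuine block and restoring the flag Carleson structure. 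A secondary technical point is the $L^p$ (rather than $L^2$) theory: one needs the flag square function and its adjoint to be bounded on $L^p$ for all $1<p<\infty$, and one must run the argument through a Fefferman--Stein-type vector-valued inequality or through interpolation with a good $L^\infty\to{\rm BMO}$ endpoint estimate, both of which are available in the flag Littlewood--Paley framework of \cite{HLLW, HLS}. Once these are in place, summing the four paraproduct contributions yields the claimed bound $\|[[b,R_j^{(1)}],R_k^{(2)}]_2 f\|_{L^p}\ls \|b\|_{{\rm BMO}_{\mathcal F}}\|f\|_{L^p}$.
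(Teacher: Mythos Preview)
Your plan is sound and shares the paper's core strategy---replace wavelets by a semigroup Littlewood--Paley theory, control the symbol $b$ via the Carleson-measure characterization of ${\rm BMO}_{\mathcal F}$, and close by a duality argument---but the execution differs in two notable ways. First, the paper works with the \emph{Poisson} semigroup rather than the heat semigroup: it extends $b,f,g$ harmonically to $\mathbb R^{n+1}_+\times\mathbb R^{m+1}_+$ via the flag Poisson kernel $P_{t_1,t_2}=P^{(1)}_{t_1}*_{\mathbb R^m}P^{(2)}_{t_2}$, writes the bilinear form $\langle f,[[b,R^{(1)}_j],R^{(2)}_k]_2 g\rangle$ as $\int t_1\partial_{t_1}^2 t_2\partial_{t_2}^2(\cdots)$, and then expands by Leibniz into nine main terms $\mathcal C_1,\ldots,\mathcal C_9$ (some with up to 25 subterms), rather than organizing into four high/low paraproducts. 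The advantage of the Poisson choice is that harmonicity gives the exact identity $\partial_{t}^2=-\Delta_x$, which, together with the Riesz-transform relations $\partial_{t_1}\widetilde{R^{(1)}_j*f}=-c\,\partial_{(x_1,x_2),j}\widetilde f$ and $\partial_{t_2}\widetilde{R^{(2)}_k*f}=-c\,\partial_{x_{2,k}}\widetilde f$, lets one integrate by parts in the spatial variables and convert \emph{every} term into one carrying a factor $\nabla^{(1)}\nabla^{(2)}B$; this is precisely what feeds into the flag Carleson/BMO pairing (the paper's Lemma~\ref{lemma Tent} and Proposition~\ref{prop BMO estimate}). Second, your worry about the region $t_2\le t_1$ is framed slightly differently in the paper: the integrals run over the full $\mathbb R^{n+1}_+\times\mathbb R^{m+1}_+$, and the flag asymmetry is encoded not as a constraint on $(t_1,t_2)$ but through the convolution structure of $P_{t_1,t_2}$ and the flag cone function $\chi_{t_1,t_2}$; the Carleson lemma (Lemma~\ref{lemma Tent}) is stated for general $F,G$ on the upper half-spaces and the flag geometry enters only via the tents $T(\Omega)$ built from rectangles with $\ell(I)\le\ell(J)$. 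Your heat-semigroup/paraproduct route should work as well, but you would need to supply the analogue of the Riesz identities above and verify the $L^p$ bounds for the resulting flag square functions directly, whereas the paper gets these essentially for free from harmonicity and the results of \cite{HLLW}.
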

Lacking methods related to analyticity (\cite{FS} for the Hilbert transform) or wavelets (\cite{LPPW,LPPW2}, \cite{DO}), we instead obtain this upper bound using the duality argument and the tools of semigroups, harmonic function extensions and techniques from  multi-parameter analysis.

Next, we introduce the little flag BMO space. The flag structure has a geometry which is reflected by the axes-parallel rectangles $R=I\times J\subset\mathbb{R}^{n+m}$ satisfying $\ell(I)\leq \ell(J)$, the collection of which is referred to as \emph{flag rectangles}, denoted by $\mathcal{R}_\mathcal{F}$. One can then define the little flag BMO space and the flag type Muckenhoupt   weights $A_{{\mathcal F},p}$  with respect to $\mathcal{R}_\mathcal{F}$. 
\begin{defn}\label{def-of-hardy-by-han}
A locally integrable function $b$ is in \emph{little flag BMO space}, denoted by $\text{bmo}_{\mathcal{F}}(\R^{n}\times\R^{m})$, if
\begin{equation}\label{bmodef}
\|b\|_{\text{bmo}_{\mathcal{F}}(\R^{n}\times\R^{m})}:=\sup_{R\in\mathcal{R}_{\mathcal{F}}}{1\over |R|}\int_R|b(x,y)-\langle b\rangle_R|\,dxdy<\infty,
\end{equation}
where $\langle b\rangle_R = {1\over |R|}\int_R b(x_1,x_2)\,dx_1dx_2$.
\end{defn}

\begin{thm}\label{thm big com}
Suppose $T_\mathcal{F}$ is a flag singular integral operator on $\mathbb{R}^n\times\mathbb{R}^m$, $b\in {\rm bmo}_{\mathcal{F}}(\R^n\times\R^m)$ and $1<p<\infty$. Then for %every $j=1,\ldots,n+m$, $k=1,\ldots,m$, 
$f\in L^p(\R^{n+m})$,
\begin{align}
\|[b,T_\mathcal{F}](f)\|_{L^p(\R^{n+m})}\ls \|b\|_{ {\rm bmo}_{\mathcal{F}}(\R^n\times\R^m)  } \|f\|_{L^p(\R^{n+m})}.
\end{align}
\end{thm}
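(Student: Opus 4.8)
The plan is to run the Coifman--Rochberg--Weiss ``Cauchy integral'' argument in the flag setting, with a flag weighted theory and an exponential--logarithmic bridge for ${\rm bmo}_{\mathcal F}$ playing the role of their one-parameter ingredients. Fix $p\in(1,\infty)$ and, by homogeneity, normalize $\|b\|_{{\rm bmo}_{\mathcal F}(\mathbb{R}^n\times\mathbb{R}^m)}=1$. Consider the $L^p(\mathbb{R}^{n+m})$-valued function $F(z):=e^{zb}\,T_{\mathcal F}(e^{-zb}f)$, which is holomorphic on a small disc about the origin (first for $f$ bounded with compact support, the general case following by a truncation/density argument once the a priori bound is available), and observe that $F'(0)=[b,T_{\mathcal F}](f)$. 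Cauchy's formula on $|z|=r$ gives
\begin{align*}
[b,T_{\mathcal F}](f)=\frac{1}{2\pi i}\oint_{|z|=r}\frac{e^{zb}\,T_{\mathcal F}(e^{-zb}f)}{z^2}\,dz,
\end{align*}
hence, by Minkowski's integral inequality,
\begin{align*}
\|[b,T_{\mathcal F}](f)\|_{L^p(\mathbb{R}^{n+m})}\ls \frac1r\,\sup_{|z|=r}\big\|e^{zb}\,T_{\mathcal F}(e^{-zb}f)\big\|_{L^p(\mathbb{R}^{n+m})}.
\end{align*}

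For fixed $z$ with $|z|=r$, setting $s:=p\,\mathrm{Re}(z)$ (so $|s|\le pr$) and noting $|e^{-zb}f|^p e^{sb}=|f|^p$, we have
\begin{align*}
\big\|e^{zb}\,T_{\mathcal F}(e^{-zb}f)\big\|_{L^p}^p=\int_{\mathbb{R}^{n+m}}\big|T_{\mathcal F}(e^{-zb}f)\big|^p\,e^{sb}\,dx,
\end{align*}
so everything reduces to the $L^p(w)$-boundedness of $T_{\mathcal F}$ for $w=e^{sb}$: if $\|T_{\mathcal F}\|_{L^p(w)\to L^p(w)}$ is controlled in terms of $[w]_{A_{\mathcal F,p}}$, the right side above is $\ls\int|e^{-zb}f|^p e^{sb}=\|f\|_{L^p}^p$. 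Two ingredients are thus needed. First, a weighted $L^p$ theory for flag singular integrals: a flag SIO is bounded on $L^p(w)$ with norm depending only on $p$, the dimensions and $[w]_{A_{\mathcal F,p}}$, where $A_{\mathcal F,p}$ is the Muckenhoupt class tested over the flag rectangles $\mathcal{R}_{\mathcal F}$ (obtained by combining the flag Littlewood--Paley/square-function machinery of \cite{HLLW} with a Rubio de Francia type extrapolation over $\mathcal{R}_{\mathcal F}$). Second, and this is the heart of the matter, the \emph{exponential--logarithmic bridge}: there is $\delta=\delta(n,m,p)>0$ so that $\|b\|_{{\rm bmo}_{\mathcal F}}\le 1$ forces $e^{sb}\in A_{\mathcal F,p}$ with $[e^{sb}]_{A_{\mathcal F,p}}\ls 1$ uniformly for all real $|s|\le\delta$. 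Granting these, choose $r:=\delta/p$, so that $|s|\le\delta$ on $|z|=r$; then $\|[b,T_{\mathcal F}](f)\|_{L^p}\ls (p/\delta)\,\|f\|_{L^p}$, and undoing the normalization restores the factor $\|b\|_{{\rm bmo}_{\mathcal F}}$.

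To prove the bridge, the key step is a John--Nirenberg inequality over flag rectangles: $|\{x\in R:\ |b(x)-\langle b\rangle_R|>\lambda\}|\ls e^{-c\lambda/\|b\|_{{\rm bmo}_{\mathcal F}}}\,|R|$ for every $R\in\mathcal{R}_{\mathcal F}$. The obstacle is that $\mathcal{R}_{\mathcal F}$ is not stable under the operations driving the usual product John--Nirenberg iteration — one cannot let the $\mathbb{R}^m$-side of a flag rectangle shrink below the $\mathbb{R}^n$-side — so that argument does not transfer directly. We get around this by first showing that $b\in{\rm bmo}_{\mathcal F}$ is equivalent to: (i) $b(x_1,\cdot)\in{\rm BMO}(\mathbb{R}^m)$ uniformly in $x_1$; and (ii) for every cube $J\subset\mathbb{R}^m$, the $J$-average $\tilde b_J(x_1):=\langle b(x_1,\cdot)\rangle_J$ belongs to ${\rm BMO}(\mathbb{R}^n)$ at scales $\le\ell(J)$, uniformly in $J$ — the nontrivial implication coming from inserting $\tilde b_J$ and applying the triangle inequality on a flag rectangle $R=I\times J$. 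Applying the one-parameter John--Nirenberg inequality to $b(x_1,\cdot)$ on $J$ and to $\tilde b_J$ on $I$ (legitimate because $\ell(I)\le\ell(J)$), together with $\langle b\rangle_R=\langle\tilde b_J\rangle_I$ and the splitting $b-\langle b\rangle_R=(b-\tilde b_J)+(\tilde b_J-\langle\tilde b_J\rangle_I)$, gives $\langle e^{\delta(b-\langle b\rangle_R)}\rangle_R\ls 1$, and the same with $\delta$ replaced by $-\delta/(p-1)$, for $\delta$ small; multiplying these two estimates produces $[e^{\delta b}]_{A_{\mathcal F,p}}\ls 1$. The genuine difficulty of the proof is therefore concentrated in these two structural facts — the flag-weighted boundedness of $T_{\mathcal F}$ and the flag John--Nirenberg inequality — the Cauchy integral step itself being routine.
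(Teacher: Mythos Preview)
Your overall strategy coincides with the paper's: the Coifman--Rochberg--Weiss Cauchy integral trick reduces the commutator bound to (a) the $A_{\mathcal F,p}$-weighted $L^p$ boundedness of flag singular integrals, and (b) an exponential--logarithmic bridge showing $e^{sb}\in A_{\mathcal F,p}$ uniformly for small real $s$ whenever $b\in{\rm bmo}_{\mathcal F}$. For (a) the paper simply quotes the weighted estimate of Han--Lin--Wu rather than deriving it from flag Littlewood--Paley theory plus extrapolation as you suggest.

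The genuine difference lies in the proof of (b). The paper first establishes the structural identity ${\rm bmo}_{\mathcal F}(\mathbb{R}^{n+m})={\rm BMO}(\mathbb{R}^{n+m})\cap{\rm BMO}^{(2)}(\mathbb{R}^m)$ and invokes Wu's companion result $A_{\mathcal F,p}=A_p(\mathbb{R}^{n+m})\cap A_p^{(2)}$, thereby reducing the bridge to two separate applications of Hyt\"onen's one-parameter estimate $[e^{{\rm Re}(bz)}w]_{A_p}\le C[w]_{A_p}$. Your route instead proves a flag John--Nirenberg inequality directly, via the fiberwise splitting $b-\langle b\rangle_R=(b-\tilde b_J)+(\tilde b_J-\langle\tilde b_J\rangle_I)$ and one-parameter John--Nirenberg applied to each piece; your characterization (i)--(ii) replaces the paper's joint ${\rm BMO}(\mathbb{R}^{n+m})$ condition by a restricted-scale ${\rm BMO}(\mathbb{R}^n)$ condition on the $J$-averages $\tilde b_J$ (which is legitimate since the Calder\'on--Zygmund stopping in the John--Nirenberg proof on $I$ only uses subcubes of $I$, all of which have side length $\le\ell(J)$). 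Both arguments are correct: yours is more self-contained in that it avoids Wu's weight-intersection theorem and Hyt\"onen's lemma, while the paper's is shorter once those external results are granted and is packaged as a general higher-order, two-weight bootstrapping theorem.
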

In the above, the flag singular integral $T_\mathcal{F}$ can be taken as the Riesz transform $R_{j,k}$. The class of flag singular integral operators $T_\mathcal{F}$ naturally generalize the Riesz transforms $R_{j,k}$ and are assumed to be associated to kernels having a standard flag structure. We refer the reader to Definition \ref{FlagSIO} in Section 4 for its precise definition. To obtain this upper bound, we study the little flag BMO space ${\rm bmo}_{\mathcal{F}}(\R^n\times\R^m)$
and find the connection with the John--Nirenberg BMO space on $\R^{n+m}$ and on $\R^m$. We also establish the bridge between functions in ${\rm bmo}_{\mathcal{F}}(\R^n\times\R^m)$ and
weights in $A_{{\mathcal F},p}$. These structures lead to the upper bound for $[b,R_{j,k}](f)$.

As application, the commutator estimates obtained above imply certain versions of div-curl lemmas, which seem to be first of their kind in the flag setting. Roughly speaking, a div-curl lemma says that if vector fields $E$ and $B$ initially in $L^2$ have some cancellation (e.g. divergence or curl zero) then one can expect their dot product $E\cdot B$ to belong to a better space of functions instead of just $L^1$ (as provided for by Cauchy-Schwarz). The cancellation conditions allow one to deduce some type of cancellation, e.g. $\int E\cdot B =0$, suggesting that the function should belong to a suitable Hardy space since it is integrable and has mean zero. The algebraic structure of $E\cdot B$ coupled with the duality between Hardy spaces and BMO spaces then points to the use of the commutator theorem to arrive at the membership of $E\cdot B$ in the Hardy space; different commutator results suggest different div-curl lemmas that can be explored. In the classical one-parameter setting, the div-curl lemma says that given two vector fields, one with divergence zero and the other with curl zero, their dot product belongs to a Hardy space \cite{CLMS}. Later on, Lacey, Petermichl, and the fourth and the fifth authors proved multiple versions of div-curl lemmas in the multi-parameter setting \cite{LPPW3}, which are expected since the multi-parameter setting offers several different interpretations of the Hardy and BMO spaces. Thus, it is natural that our Theorems \ref{thm iterated com} and \ref{thm big com} lead to two versions of flag type div-curl lemmas. 

First, consider vector fields on $\R^{n}\times\R^m$ that take values in $\mathcal M_{{n+m},m}$ and are associated with the flag structure (see Section 5 for the precise definitions and details). We establish the div-curl lemma in the flag setting with respect to the flag Hardy space below, which is a consequence of Theorem \ref{thm iterated com}. 
\begin{thm}\label{thm divcurl1}
Let $1<p,q<\infty$ with ${1\over p}+{1\over q}=1$. Suppose that 
$E,B$ are vector fields on $\R^{n}\times\R^m$ taking the values in $\mathcal M_{{n+m},m}$, associated  with the flag structure. Moreover, suppose
$E=E^{(1)}\ast_2 E^{(2)}\in L^p_{\mathcal{F}}(\R^{n}\times\R^m; \mathcal M_{{n+m},m})$ and 
$B=B^{(1)}\ast_2 B^{(2)}\in L^q_{\mathcal{F}}(\R^{n}\times\R^m; \mathcal M_{{n+m},m})$ satisfy that
$$ \operatorname{div}_{(x,y)} E^{(1)}_{j}(x,y)=0\quad{\rm and}\quad \operatorname{curl}_{(x,y)} B^{(1)}_{j}(x,y)=0,\quad   \forall k $$
and
$$ \operatorname{div}_{y} E^{(2)}_{k}(x,y)=0\quad{\rm and}\quad \operatorname{curl}_{y} B^{(2)}_{k}(x,y)=0  ,\quad \forall x\in\R^n,\ \forall j.$$
Then $E\cdot B$ belongs to the flag Hardy space $H^1_{\mathcal{F}}(\R^n\times\R^m)$ with
\begin{align}
\|E\cdot B\|_{H^1_{\mathcal{F}}(\R^n\times\R^m)}\ls \|E\|_{L^p(\R^{n}\times\R^m; \mathcal M_{{n+m},m})}
\|B\|_{L^q(\R^{n}\times\R^m; \mathcal M_{{n+m},m})}.
\end{align}
\end{thm}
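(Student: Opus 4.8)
The plan is to deduce the theorem from Theorem~\ref{thm iterated com} via the duality argument for div-curl lemmas (\cite{CLMS,LPPW3}), adapted to the flag setting. Since $\tfrac1p+\tfrac1q=1$, H\"older's inequality already gives $E\cdot B\in L^1(\R^{n+m})$; to upgrade this to membership in $H^1_{\mathcal{F}}(\R^n\times\R^m)$ with the asserted norm control it suffices, by the $H^1_{\mathcal{F}}$--${\rm BMO}_{\mathcal{F}}$ duality together with a routine density argument, to prove
\[
\Big|\int_{\R^{n+m}}(E\cdot B)(x,y)\,b(x,y)\,dx\,dy\Big|\ \ls\ \|b\|_{{\rm BMO}_{\mathcal{F}}(\R^n\times\R^m)}\,\|E\|_{L^p}\,\|B\|_{L^q}
\]
for every $b$ in a dense subclass of ${\rm BMO}_{\mathcal{F}}(\R^n\times\R^m)$, e.g.\ the compactly supported smooth functions, for which all of the manipulations below are unambiguous.

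First I would recast the hypotheses, in each parameter separately, as Riesz-transform identities. In the first parameter (on $\R^{n+m}$), $\operatorname{curl}_{(x,y)}B^{(1)}=0$ produces a potential so that each component equals $R^{(1)}_j$ applied to a common function $\phi^{(1)}=-\sum_j R^{(1)}_j B^{(1)}_j$, while $\operatorname{div}_{(x,y)}E^{(1)}=0$ is exactly the identity $\sum_j R^{(1)}_j E^{(1)}_j=0$; the analogous statements hold in the second parameter (on $\R^m$) with the transforms $R^{(2)}_k$ and a potential $\phi^{(2)}$. Since $R^{(1)}_j$, $R^{(2)}_k$ and the partial convolution $\ast_2$ are all convolution operators, they commute with one another, so that, setting $\Psi:=\phi^{(1)}\ast_2\phi^{(2)}$, one obtains $B^{(1)}_j\ast_2 B^{(2)}_k=R_{j,k}\Psi$ and dually $\Psi=\sum_{j,k}R_{j,k}\big(B^{(1)}_j\ast_2 B^{(2)}_k\big)$; by the $L^q$-boundedness of the flag Riesz transforms the latter gives $\|\Psi\|_{L^q}\ls\|B\|_{L^q}$.

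Next I would substitute into the pairing. Writing $\int(E\cdot B)\,b=\sum_{j,k}\langle R_{j,k}\Psi,\ (E^{(1)}_j\ast_2 E^{(2)}_k)\,b\rangle$ and moving $R_{j,k}$ onto the second factor using the skew-adjointness of $R^{(1)}_j$ and of $R^{(2)}_k\ast_2$ (so that $R_{j,k}$ is formally self-adjoint), one expands $R_{j,k}\big((E^{(1)}_j\ast_2 E^{(2)}_k)\,b\big)$ around the iterated commutator of Definition~\ref{def-of-flag-commutator}. Besides the term $\langle\Psi,\,[[b,R^{(1)}_j],R^{(2)}_k]_2(E^{(1)}_j\ast_2 E^{(2)}_k)\rangle$, this produces exactly three remainders, of the shapes $b\,R_{j,k}(E^{(1)}_j\ast_2 E^{(2)}_k)$, $R^{(1)}_j\big(b\,R^{(2)}_k\ast_2(E^{(1)}_j\ast_2 E^{(2)}_k)\big)$ and $R^{(2)}_k\ast_2\big(b\,R^{(1)}_j(E^{(1)}_j\ast_2 E^{(2)}_k)\big)$; on summing over $j$ and $k$ and commuting the Riesz transforms past $\ast_2$, each of these sums factors through $\sum_j R^{(1)}_j E^{(1)}_j=0$ or $\sum_k R^{(2)}_k E^{(2)}_k=0$, hence vanishes. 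What remains is
\[
\int_{\R^{n+m}}(E\cdot B)\,b\ =\ \sum_{j,k}\big\langle\Psi,\ [[b,R^{(1)}_j],R^{(2)}_k]_2\big(E^{(1)}_j\ast_2 E^{(2)}_k\big)\big\rangle,
\]
and then H\"older's inequality, Theorem~\ref{thm iterated com}, and the bound on $\|\Psi\|_{L^q}$ yield $\big|\int(E\cdot B)\,b\big|\ls\|\Psi\|_{L^q}\,\|b\|_{{\rm BMO}_{\mathcal{F}}}\sum_{j,k}\|E^{(1)}_j\ast_2 E^{(2)}_k\|_{L^p}\ls\|b\|_{{\rm BMO}_{\mathcal{F}}}\,\|E\|_{L^p}\,\|B\|_{L^q}$, as desired.

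I expect the main obstacle to be the third step: executing the algebra within the flag framework --- tracking the $\mathcal{M}_{n+m,m}$-valued fields and the partial convolutions $\ast_2$, verifying that it is precisely the flag iterated commutator of Definition~\ref{def-of-flag-commutator} (and not another combination) that appears, and checking that \emph{every} remainder term cancels, which uses the divergence-free hypothesis in \emph{both} parameters (and, for the potential representation, the curl-free hypothesis in both). A secondary technical point is to justify rigorously the adjoint/integration-by-parts moves and the reduction via duality, done by first working with $b$, $E$, $B$ in suitable dense classes and passing to the limit; the equivalence of the two orderings of the iterated commutator (Proposition~\ref{prop equi com}) makes either route through the algebra available.
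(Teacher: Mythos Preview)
Your proposal is correct and follows essentially the same route as the paper: both use the curl-free hypotheses to write $B_{j,k}=R_{j,k}\Psi$ for a single potential $\Psi=\phi^{(1)}\ast_2\phi^{(2)}$, use the divergence-free hypotheses (in each parameter) to kill the three cross terms, recognize the remaining pairing as $\sum_{j,k}\langle [[b,R^{(1)}_j],R^{(2)}_k]_2(E_{j,k}),\Psi\rangle$, and conclude via Theorem~\ref{thm iterated com} and $H^1_{\mathcal{F}}$--${\rm BMO}_{\mathcal{F}}$ duality. The only cosmetic differences are that the paper adds the three zero terms to $E\cdot B$ \emph{before} pairing with $b$ and then ``unravels'' via adjoints, whereas you move $R_{j,k}$ across first and then check the remainders vanish, and that you bound $\|\Psi\|_{L^q}$ via the identity $\Psi=\sum_{j,k}R_{j,k}B_{j,k}$ and $L^q$-boundedness of the flag Riesz transforms, while the paper goes through the $L^q_{\mathcal{F}}$ factorization norm; both are fine.
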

We also prove another version of the div-curl lemma in the flag setting, which is with respect to the Hardy spaces on $\mathbb R^{n+m}$ and on $\mathbb R^m$, respectively. This
version relies on the intermediate result in the proof of Theorem \ref{thm big com}, namely, the structure of the flag little bmo space. 
\begin{thm}\label{thm divcurl2}
Let $1<p,q<\infty$ with ${1\over p}+{1\over q}=1$. Suppose that 
$E,B$ are vector fields on $\R^{n}\times\R^m$ taking the values in $\R^{n+m}$. Moreover, suppose
$E\in L^p(\R^{n}\times\R^m; \R^{n+m})$ and 
$B\in L^q(\R^{n}\times\R^m; \R^{n+m})$ satisfy that
$$ \operatorname{div}_{(x,y)} E(x,y)=0\quad{\rm and}\quad \operatorname{curl}_{(x,y)} B(x,y)=0 $$
and
$$ \operatorname{div}_{y} E(x,y)=0\quad{\rm and}\quad \operatorname{curl}_{y} B(x,y)=0  ,\quad \forall x\in\R^n.$$
Then we have
\begin{align}
\|E\cdot B\|_{H^1(\R^{n+m})}\ls \|E\|_{L^p(\R^{n}\times\R^m; \R^{n+m})}
\|B\|_{L^q(\R^{n}\times\R^m; \R^{n+m})}.
\end{align}
and
\begin{align}
\int_{\R^m}\|E(\cdot,y)\cdot_2 B(\cdot,y)\|_{H^1(\R^{m})}\,dy \ls \|E\|_{L^p(\R^{n}\times\R^m; \R^{n+m})}
\|B\|_{L^q(\R^{n}\times\R^m; \R^{n+m})},
\end{align}
where $$ E(x,y)\cdot_2 B(x,y) := \sum_{k=1}^m E_{n+k}(x,y) B_{k}(x,y). $$
\end{thm}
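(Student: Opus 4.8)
\noindent\emph{Proof strategy.} The plan is to deduce both assertions by duality, in each case rewriting the pointwise product as a finite sum of one-parameter Coifman--Rochberg--Weiss commutators \cite{crw} acting on $L^p$. For the first assertion, since $E\cdot B\in L^1(\R^{n+m})$ by H\"older's inequality, it suffices, by $H^1$--${\rm BMO}$ duality on $\R^{n+m}$, to bound $\bigl|\int_{\R^{n+m}}(E\cdot B)\,b\bigr|$ by a constant times $\|b\|_{{\rm BMO}(\R^{n+m})}\,\|E\|_{L^p}\,\|B\|_{L^q}$ for every $b$ in a dense subclass of ${\rm BMO}(\R^{n+m})$; this part uses only $\operatorname{div}_{(x,y)}E=0$, $\operatorname{curl}_{(x,y)}B=0$ and is essentially the classical div-curl lemma \cite{CLMS} in dimension $n+m$. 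For the second assertion I would argue \emph{slicewise}: by the structural description of the little flag BMO space obtained along the way to Theorem~\ref{thm big com} --- which identifies the fiber behaviour of ${\rm bmo}_{\mathcal{F}}(\R^n\times\R^m)$ with uniform membership in the one-parameter space ${\rm BMO}(\R^m)$ on almost every slice $\{x\}\times\R^m$ --- the natural predual of this fiber-uniform space is the mixed Hardy space $\int_{\R^n}\|\,\cdot\,\|_{H^1(\R^m)}\,dx$, so that the second estimate follows once one controls $\esssup_{x}\bigl|\int_{\R^m} (E(x,\cdot)\cdot_2 B(x,\cdot))(y)\,b(x,y)\,dy\bigr|$ uniformly over symbols with $\esssup_x\|b(x,\cdot)\|_{{\rm BMO}(\R^m)}\le 1$. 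Recognizing that the correct test class here is this fiber-uniform ${\rm BMO}(\R^m)$, rather than full ${\rm BMO}(\R^{n+m})$, is exactly where the structure of ${\rm bmo}_{\mathcal{F}}$ enters.

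The common algebraic core is a potential representation followed by a commutator identity. Writing out the Fourier multipliers of the Riesz transforms, $\operatorname{curl}_{(x,y)}B=0$ becomes $R^{(1)}_\ell B_j=R^{(1)}_j B_\ell$ for all $\ell,j$, so using $\sum_\ell (R^{(1)}_\ell)^2=-I$ one gets $B_j=R^{(1)}_j g$ with $g:=-\sum_{\ell} R^{(1)}_\ell B_\ell$ and $\|g\|_{L^q(\R^{n+m})}\lesssim\|B\|_{L^q(\R^{n+m})}$, while $\operatorname{div}_{(x,y)}E=0$ becomes $\sum_j R^{(1)}_j E_j=0$. Then, for $b$ in the Schwartz class and using $(R^{(1)}_j)^\ast=-R^{(1)}_j$ together with $[b,R^{(1)}_j](E_j)=bR^{(1)}_jE_j-R^{(1)}_j(bE_j)$,
\begin{equation*}
\int_{\R^{n+m}}(E\cdot B)\,b=\sum_j\int (bE_j)\,(R^{(1)}_j g)=-\sum_j\int\bigl(R^{(1)}_j(bE_j)\bigr)g=\sum_j\int\bigl([b,R^{(1)}_j](E_j)\bigr)g,
\end{equation*}
the last step because $\int b\bigl(\sum_j R^{(1)}_j E_j\bigr)g=0$. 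The Coifman--Rochberg--Weiss estimate on $L^p(\R^{n+m})$ and H\"older's inequality then finish the first assertion (and $b\equiv1$ recovers the heuristic cancellation $\int E\cdot B=0$).

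For the second assertion I would run the same computation on each fiber with the Riesz transforms $R^{(2)}_1,\dots,R^{(2)}_m$ on $\R^m$: the hypotheses $\operatorname{curl}_yB(x,\cdot)=0$ and $\operatorname{div}_yE(x,\cdot)=0$ give $B_k(x,\cdot)=R^{(2)}_k g(x,\cdot)$ with $g(x,\cdot):=-\sum_k R^{(2)}_k B_k(x,\cdot)$ and $\sum_k R^{(2)}_k E_{n+k}(x,\cdot)=0$, whence
\begin{equation*}
\int_{\R^m}\bigl(E(x,\cdot)\cdot_2 B(x,\cdot)\bigr)(y)\,b(x,y)\,dy=\sum_k\int_{\R^m}\bigl([b(x,\cdot),R^{(2)}_k](E_{n+k}(x,\cdot))\bigr)(y)\,g(x,y)\,dy.
\end{equation*}
Applying the Coifman--Rochberg--Weiss estimate on $L^p(\R^m)$ with constant independent of $x$, then H\"older in $y$, then H\"older in $x$ (and the slicewise $L^q(\R^m)$-boundedness of $\sum_k R^{(2)}_k$ integrated in $x$) yields
\begin{equation*}
\int_{\R^n}\|E(x,\cdot)\cdot_2 B(x,\cdot)\|_{H^1(\R^m)}\,dx\lesssim\sum_k\|E_{n+k}\|_{L^p(\R^{n+m})}\|g\|_{L^q(\R^{n+m})}\lesssim\|E\|_{L^p(\R^{n+m})}\|B\|_{L^q(\R^{n+m})}.
\end{equation*}

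Several routine points would be handled as in the one-parameter theory: density of the Schwartz class in ${\rm BMO}$ for the duality pairings; the distributional justification of $B_j=R^{(1)}_jg$ for merely $L^q$ data (the scalar potential is defined only modulo polynomials, but the identities $R^{(1)}_\ell B_j=R^{(1)}_j B_\ell$ are unambiguous on the Fourier side); and the measurability and integrability in $x$ of $x\mapsto\|E(x,\cdot)\cdot_2 B(x,\cdot)\|_{H^1(\R^m)}$, via Fubini and the atomic/square-function descriptions of $H^1(\R^m)$. The step I expect to be the main obstacle is the second assertion: one must check that the one-parameter commutator argument on the slices produces bounds uniform in $x$, and --- more essentially --- one must know that the correct dual of the mixed Hardy norm $\int_{\R^n}\|\,\cdot\,\|_{H^1(\R^m)}\,dx$ is precisely the fiber-uniform ${\rm BMO}(\R^m)$ space appearing in the structural analysis of ${\rm bmo}_{\mathcal{F}}(\R^n\times\R^m)$ carried out in the proof of Theorem~\ref{thm big com}. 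It is this identification that makes the commutator approach go through in the flag-fibered direction, and it cannot be bypassed.
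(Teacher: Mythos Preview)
Your proposal is correct and takes essentially the same approach as the paper: reduce each assertion by duality to one-parameter Coifman--Rochberg--Weiss commutator bounds, using the curl-free condition to represent $B$ via Riesz transforms of a potential and the div-free condition to eliminate the non-commutator terms. The only cosmetic difference is that the paper packages the reduction via the Leray projection $\mathcal{P}$ (so the commutators appearing are $[b, R^{(1)}_j R^{(1)}_k]$) rather than your direct representation $B_j=R^{(1)}_j g$, and for the second assertion tests against $b\in{\rm bmo}_{\mathcal{F}}$ and invokes Lemma~\ref{bmointersect} rather than testing directly against the fiber-uniform class ${\rm BMO}^{(2)}(\R^m)$.
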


It is known that the div-curl lemma in the classical setting has many applications in PDE and compensated compactness \cite{CLMS}. Similarly, we expect that the flag type div-curl lemmas described above would have interesting implications in these directions as well. For instance, following the ideas in \cite{CLMS}, one can study weak convergence problems in the flag Hardy space. And it would be interesting to know whether one can use the flag type regularity (implied by our div-curl lemmas) of certain nonlinear quantities to obtain improved regularity results for certain nonlinear PDE.

This paper is organised as follows. 
In Section 2 we provide necessary preliminaries with respect to the flag structures. 
In Section 3 we study the flag iterated commutators as in Definition \ref{def-of-flag-commutator}
and prove Theorem \ref{thm iterated com}. In Section 4 we give a complete treatment of the 
flag little bmo spaces and flag type Muckenhoupt $A_p$ weights, toward the proof of Theorem \ref{thm big com}.
In the last section, we apply the boundedness of flag commutators
from Theorems \ref{thm iterated com} and \ref{thm big com} to establish the flag 
div-curl results, Theorems  \ref{thm divcurl1} and \ref{thm divcurl2}.

\section{Preliminaries in the flag setting}\label{s2}
\setcounter{equation}{0}

Recall the classical Poisson kernel on $\mathbb R^n$:
$$ P(x) := {c_n\over (1+|x|^2)^{n+1\over2}}. $$
And we define
$$ P_t(x) := {1\over t^n} P({x\over t}). $$
For $f\in L^1(\mathbb R^n)$, let $F(x,t) := P_t*f(x)$.
Then we have the following standard pointwise estimates for the Poisson integral, see in particular Stein (\cite{s93}).
\begin{prop}Suppose $f\in L^1(\mathbb R^n)$. Then
\begin{align}
\sup_{(x,t)\in\mathbb R^{n+1}_+} t^{n+k} |\nabla^k_{x,t} F(x,t)| \leq C \|f\|_{L^1(\mathbb R^n)}.
\end{align}
\end{prop}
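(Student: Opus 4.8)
The plan is to prove the pointwise gradient estimate for the Poisson integral by differentiating under the integral sign and exploiting the scaling of the Poisson kernel together with the size estimates on its derivatives. First I would record the elementary fact that for each multi-index order $k$ there is a constant $C_{n,k}$ so that
\begin{align*}
|\nabla^k_{x,t} P_t(x)| \le \frac{C_{n,k}}{(t+|x|)^{n+k}} \le \frac{C_{n,k}}{t^{n+k}}, \qquad (x,t)\in\mathbb R^{n+1}_+,
\end{align*}
which follows by induction on $k$ from the explicit form $P_t(x)=c_n t\,(t^2+|x|^2)^{-(n+1)/2}$: each $x$- or $t$-derivative either lowers a power of $(t^2+|x|^2)$ in the denominator by bringing down a factor of the form $x_i$ or $t$ in the numerator, or acts on the leading $t$, and in all cases the homogeneity degree $-(n+k)$ in the variables $(x,t)$ is preserved while the numerator is controlled by $(t^2+|x|^2)^{1/2}$ to the appropriate power. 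Equivalently one writes $\nabla^k_{x,t}P_t(x) = t^{-(n+k)}(\nabla^k Q)(x/t)$ for a fixed Schwartz-type function $Q$ depending on which derivatives are taken, and uses that $\nabla^k Q \in L^\infty(\mathbb R^n)$.

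Next I would justify differentiating under the integral sign: since $P(x,t) = P_t * f(x) = \int_{\mathbb R^n} P_t(x-y) f(y)\,dy$ with $f\in L^1(\mathbb R^n)$, and for $(x,t)$ ranging in a compact subset of $\mathbb R^{n+1}_+$ the kernel $(x,t)\mapsto \nabla^k_{x,t} P_t(x-y)$ is bounded uniformly in $y$ by an integrable (indeed constant times $\|f\|_{L^1}$ after integration) majorant, the dominated convergence theorem permits moving $\nabla^k_{x,t}$ inside the integral. Then
\begin{align*}
|\nabla^k_{x,t} F(x,t)| = \left| \int_{\mathbb R^n} \big(\nabla^k_{x,t} P_t\big)(x-y)\, f(y)\,dy \right| \le \sup_{z\in\mathbb R^n}|\nabla^k_{x,t}P_t(z)| \int_{\mathbb R^n}|f(y)|\,dy \le \frac{C}{t^{n+k}}\,\|f\|_{L^1(\mathbb R^n)},
\end{align*}
using the size bound from the first step. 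Multiplying through by $t^{n+k}$ and taking the supremum over $(x,t)\in\mathbb R^{n+1}_+$ gives the claim with $C = C_{n,k}$.

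The only genuine point requiring care — and the step I would expect to be the "main obstacle", though it is entirely routine — is the inductive verification of the derivative bound $|\nabla^k_{x,t}P_t(x)| \lesssim (t^2+|x|^2)^{-(n+k)/2}$, i.e. checking that no derivative can produce a worse homogeneity than the naive count predicts. This is handled by observing that $P(x,t) := c_n t\,(t^2+|x|^2)^{-(n+1)/2}$ extends to a function on $\mathbb R^{n+1}_+$ that is positively homogeneous of degree $-n$ and smooth away from the origin, hence any $k$-th order derivative is homogeneous of degree $-n-k$ and smooth on the unit sphere intersected with $\{t>0\}$, which is precompact in $\{t>0\}$ only after we also note $P$ and its derivatives decay; more simply, one checks directly by Leibniz that $\partial^\alpha_x \partial^j_t P(x,t)$ is a finite sum of terms $c\, x^\beta t^\ell (t^2+|x|^2)^{-(n+1+2r)/2}$ with $|\beta|+\ell - (n+1+2r) = -n-|\alpha|-j$ and $|\beta|+\ell \le n+1+2r$, so each such term is bounded by $(t^2+|x|^2)^{-(n+|\alpha|+j)/2}$. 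Since the whole argument is a standard application of scaling plus differentiation under the integral, I do not anticipate any real difficulty; the statement is essentially Stein's estimate quoted verbatim, and the proof is a few lines once the kernel bound is in hand.
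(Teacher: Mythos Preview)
Your argument is correct and is the standard one: differentiate under the integral, use the homogeneity of $P_t(x)=t^{-n}P(x/t)$ to get $|\nabla^k_{x,t}P_t(x)|\le C_{n,k}t^{-n-k}$, and integrate against $|f|$. The paper does not give its own proof of this proposition; it simply records the estimate as a standard fact and refers the reader to Stein \cite{s93}, so there is nothing further to compare.
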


We now recall the flag Poisson kernel given by
\begin{eqnarray*}
 P(x,y)=P^{(1)}\ast_{\mathbb R^m}
P^{(2)}(x,y)=\int_{\mathbb{R}^m}P^{(1)}(x,y-z)P^{(2)}(z)dz
\end{eqnarray*}
where
\begin{eqnarray*}
P^{(1)}(x,y)=\frac{\displaystyle c_{n+m}} {\displaystyle
(1+|x|^2+|y|^2)^{(n+m+1)/2} }\ \ {\rm and}\ \
P^{(2)}(z)=\frac{\displaystyle c_{m}} {\displaystyle
(1+|z|^2)^{(m+1)/2} }\ \ \ \
\end{eqnarray*}
are the classical Poisson kernels on $\mathbb{R}^{n+m}$ and $\mathbb{R}^m$,
respectively. Then we have
$$ P_{t_1,t_2}(x,y)=P^{(1)}_{t_1}\ast_{\mathbb R^m}
P^{(2)}_{t_2}(x,y).$$

We define the Lusin area function with respect to
$u = P_{t_1,t_2}\ast f$ as follows.
\begin{defn}\label{def-of-S-function-Su}
For $f\in L^1(\mathbb{R}^n\times\mathbb{R}^m)$ and $u(x_1,x_2,t_1,t_2)=P_{t_1,t_2}\ast f(x_1,x_2),$
$S_{\mathcal{F}}(u)$, the Lusin area integral of $u(x_1,x_2,t_1,t_2)$ is
defined by
\begin{eqnarray}\label{S-function-Su}
&&S_{\mathcal{F}}(u)(x_1,x_2)\\
&&=\bigg\{\int_{\mathbb{R}_{+}^{n+1}}\int_{\mathbb{R}_{+}^{m+1}}\chi_{t,s}(x_1-w_1,x_2-w_2)
|t_1\nabla^{(1)}t_2\nabla^{(2)}u(w_1,w_2,t_1,t_2)|^2 {dw_1dt_1 \over t_1^{n+m+1}}{dw_2dt_2
\over t_2^{m+1}} \bigg\}^{{1\over 2}},\nonumber
\end{eqnarray}
where 
$\nabla^{(1)}=\big(\partial_{t_1},\partial_{w_{1,1}}\cdots\partial_{w_{1,n}},\partial_{w_{2,1}}\cdots\partial_{w_{2,m}}\big)$ is the standard gradient on 
$\mathbb R^{n+m+1}$,
and
$\nabla^{(2)}=\big(\partial_{t_2},\partial_{w_{2,1}}\cdots\partial_{w_{2,m}}\big)$ is the standard gradient on $\mathbb R^{m+1}$, and
 \begin{align}\label{chi function}
 \chi_{t_1,t_2}(x_1,x_2):=\chi_{t_1}^{(1)}\ast_{\mathbb R^m} \chi_{t_2}^{(2)}(x_1,x_2),
 \end{align}
$\chi_{t_1}^{(1)}(x_1,x_2):={t_1}^{-(n+m)}\chi^{(1)}({x_1\over t_1},{x_2\over t_1})$,
$\chi_{t_2}^{(2)}(z):={t_2}^{-m}\chi^{(2)}({z\over t_2})$, $\chi^{(1)}(x,y)$
and $\chi^{(2)}(z)$ are the indicator function of the unit balls of
$\mathbb{R}^{n+m}$ and $\mathbb{R}^m$, respectively.
\end{defn}

\begin{defn}\label{def flag Hardy}
The flag Hardy space $H_{\mathcal{F}}^1(\mathbb{R}^n\times\mathbb{R}^m)$
is defined to be the collection of  $f\in
L^1(\mathbb{R}^n\times\mathbb{R}^m) $ such that $ S_{\mathcal{F}}(u)\in
L^1(\mathbb{R}^n\times\mathbb{R}^m) $. The norm of $H_{\mathcal{F}}^1(\mathbb{R}^n\times\mathbb{R}^m)$  is
defined by
\begin{eqnarray}\label{Hp norm}
\|f\|_{H_{\mathcal{F}}^1(\mathbb{R}^n\times\mathbb{R}^m)}=\|S_{\mathcal{F}}(u)\|_{L^1(\mathbb{R}^n\times\mathbb{R}^m)}.
\end{eqnarray}
\end{defn}

We now recall the definition of the flag BMO space.
%Let $\psi^{(1)}\in\mathcal{ S}(\mathbb{ R}^{n+m})$ with the support contained in an annulus, ${\rm supp}\, \widehat{\psi^{(1)}}\subset\Big\{\xi:\frac{1}{2}\le |\xi|\le 2\Big\}$ and
%\begin{eqnarray}\label{calderon condition 1}
%\int^\infty_0 |\widehat{\psi^{(1)}}(t\xi)|^2\frac{dt}{t}=1\qquad\text{for all}\ \xi\in
%\Bbb R^n\times\Bbb R^m\setminus\{(0,0)\}.
%\end{eqnarray}
%Let $\psi^{(2)}\in\mathcal{ S}(\mathbb{ R}^{m})$ with ${\rm supp}\,\widehat{\psi^{(2)}}\subset\Big\{\eta:\frac{1}{2}\le |\eta|\le 2\Big\}$ and
%\begin{eqnarray}\label{calderon condition 2}
%\int^\infty_0|\widehat{\psi^{(2)}}(s\eta)|^2\frac{ds}{s}=1\qquad\text{for all}\ \eta\in\Bbb R^m\setminus\{0\}.
%\end{eqnarray}
%We set
%\begin{eqnarray}\label{psi jk}
%\psi_{t,s}(x,y)=\psi^{(1)}_t\ast_{_{\mathbb R^m}}\psi^{(2)}_s(x,y):=\int_{\mathbb R^m}\psi^{(1)}_t(x,y-z)\psi^{(2)}_s(z)dz,
%\end{eqnarray}
%where $\psi^{(1)}_t(x,y)=t^{-(n+m)}\psi^{(1)}(\frac{x}{t},\frac{y}{t})$ and $\psi^{(2)}_s(z)=s^{-m}\psi^{(2)}(\frac{z}{s})$.
\begin{defn}\label{def flag BMO}
The flag BMO space ${\rm BMO}_{\mathcal{F}}(\mathbb{R}^n\times\mathbb{R}^m)$
is defined to be the collection of  $b\in
L_{loc}^1(\mathbb{R}^n\times\mathbb{R}^m) $ such that 
\begin{align}\label{BMO norm}
\|b\|_{{\rm BMO}_{\mathcal{F}}(\mathbb{R}^n\times\mathbb{R}^m)}:=\sup_{\Omega}\bigg( {1\over |\Omega|} 
\int_{T(\Omega)} |t_1\nabla^{(1)}t_2\nabla^{(2)}u(w_1,w_2,t_1,t_2)|^2{dw_1dt_1 dw_2dt_2\over t_1t_2}\bigg)^{1\over2}<\infty,
\end{align}
where the supremum is taken over all open sets in $\mathbb{R}^n\times\mathbb{R}^m$  with finite measures, and
$T(\Omega)=\cup_{R\subset \Omega} T( R)$
with the rectangle $R=I\times J$, $\ell(I)\leq\ell(I)$ and $T(R)= I\times ({\ell(I)\over2},\ell(I)]\times J\times ({\ell(J)\over2},\ell(J)]$. 
\end{defn}

\begin{prop}\label{prop equi com}
Given two functions $b, f\in L^2(\mathbb{R}^{n+m})$, we have
\begin{align}\label{equi com}
[[b,R_j^{(1)}],R^{(2)}_k]_2(f)= [[b,R^{(2)}_k]_2,R_j^{(1)}](f).
\end{align}
\end{prop}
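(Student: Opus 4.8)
The plan is to show that both iterated commutators, when fully expanded, reduce to the same linear combination of four composite operators, so that the identity is really just a bookkeeping exercise in rearranging the four terms. First I would write out $[[b,R^{(2)}_k]_2,R_j^{(1)}](f)$ explicitly from the definitions. By definition, $[b,R^{(2)}_k]_2(g)(x_1,x_2) = b(x_1,x_2)\, R^{(2)}_k\ast_2 g(x_1,x_2) - R^{(2)}_k\ast_2(b g)(x_1,x_2)$, which is an operator in $g$; call it $C(g)$. Then $[[b,R^{(2)}_k]_2,R_j^{(1)}](f) = [C,R_j^{(1)}](f) = C(R_j^{(1)}\ast f) - R_j^{(1)}\ast C(f)$, and I would substitute the formula for $C$ into both terms.

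Carrying this out, $C(R_j^{(1)}\ast f) = b\cdot R^{(2)}_k\ast_2(R_j^{(1)}\ast f) - R^{(2)}_k\ast_2\big(b\cdot R_j^{(1)}\ast f\big)$, and $R_j^{(1)}\ast C(f) = R_j^{(1)}\ast\big(b\cdot R^{(2)}_k\ast_2 f\big) - R_j^{(1)}\ast R^{(2)}_k\ast_2(bf)$. Subtracting gives four terms:
\begin{align*}
[[b,R^{(2)}_k]_2,R_j^{(1)}](f) &= b\cdot R^{(2)}_k\ast_2 R_j^{(1)}\ast f - R^{(2)}_k\ast_2\big(b\cdot R_j^{(1)}\ast f\big)\\
&\quad - R_j^{(1)}\ast\big(b\cdot R^{(2)}_k\ast_2 f\big) + R_j^{(1)}\ast R^{(2)}_k\ast_2(bf).
\end{align*}
Comparing with Definition \ref{def-of-flag-commutator}, the four terms match exactly up to reordering: the first term agrees since $R^{(2)}_k\ast_2 R_j^{(1)}\ast f = R_j^{(1)}\ast R^{(2)}_k\ast_2 f$, the second and third terms are literally present in Definition \ref{def-of-flag-commutator}, and the last term agrees for the same reason as the first. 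Hence the two expressions coincide.

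The only genuine point requiring justification is the commutation $R_j^{(1)}\ast R^{(2)}_k\ast_2 (\cdot) = R^{(2)}_k\ast_2 R_j^{(1)}\ast(\cdot)$, i.e. that convolution in all $n+m$ variables with the kernel of $R_j^{(1)}$ commutes with convolution in the last $m$ variables with the kernel of $R^{(2)}_k$. This follows from Fubini/associativity of convolution (valid on $L^2$, or by density from the Schwartz class where everything is absolutely convergent, together with the $L^2$-boundedness of the Riesz transforms so that all intermediate functions lie in $L^2$), and one can also see it on the Fourier transform side, where both sides act as multiplication by $\big(-i\xi_j/|\xi|\big)\big(-i\eta_k/|\eta|\big)$ with $\xi\in\R^{n+m}$, $\eta\in\R^m$ the last $m$ coordinates of $\xi$. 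I expect this Fubini-type justification to be the ``main obstacle'' only in the bookkeeping sense; there is no real analytic difficulty, since all the operators involved are bounded on $L^2(\R^{n+m})$ and $b, f, bf$ may be taken in $L^2$ (indeed in a dense subclass where the manipulations are pointwise legitimate) before invoking density.
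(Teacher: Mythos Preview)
Your proof is correct and follows essentially the same approach as the paper: expand both iterated commutators into four terms and observe that they coincide once one knows $R_j^{(1)}\ast R^{(2)}_k\ast_2 = R^{(2)}_k\ast_2 R_j^{(1)}\ast$. The only cosmetic difference is that the paper verifies this commutation by an explicit change-of-variables computation in the convolution integrals, whereas you invoke Fubini/Fourier multipliers; both are equivalent justifications.
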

\begin{proof}
By definition, we see that
\begin{align*}
[[b,R_j^{(1)}],R^{(2)}_k]_2(f)(x_1,x_2) &= [b,R_j^{(1)}] R^{(2)}_k\ast_2 f(x_1,x_2)  - R^{(2)}_k\ast_2 ( [b,R_j^{(1)}](f) )(x_1,x_2) \\
&= b(x_1,x_2) R_j^{(1)} \ast R^{(2)}_k\ast_2 f(x_1,x_2) - R_j^{(1)} \ast (b\cdot R^{(2)}_k\ast_2 f) (x_1,x_2)\\
&\quad - R^{(2)}_k \ast_2 \big( b\cdot R_j^{(1)} \ast f - R_j^{(1)}\ast (b\cdot f) \big)(x_1,x_2)\\
&= b(x_1,x_2) R_j^{(1)} \ast R^{(2)}_k\ast_2 f(x_1,x_2) - R_j^{(1)} \ast (b\cdot R^{(2)}_k\ast_2 f) (x_1,x_2)\\
&\quad - R^{(2)}_k \ast_2 \big( b\cdot R_j^{(1)} \ast f\big) (x_1,x_2) + R^{(2)}_k\ast_2 R_j^{(1)}\ast (b\cdot f) (x_1,x_2).
\end{align*}
And we also have
\begin{align*}
 [[b,R^{(2)}_k]_2,R_j^{(1)}](f)(x_1,x_2) &= [b,R^{(2)}_k]_2 R_j^{(1)}\ast f(x_1,x_2)  - R_j^{(1)}\ast ( [b,R^{(2)}_k]_2(f) )(x_1,x_2) \\
&= b(x_1,x_2) R^{(2)}_k\ast_2 R_j \ast  f(x_1,x_2) -R^{(2)}_k\ast_2 (b\cdot  R_j ^{(1)}\ast f)(x_1,x_2)\\
&\quad -  R_j^{(1)} \ast \big( b\cdot R^{(2)}_k \ast_2 f - R_k^{(2)} \ast_2 (b\cdot f) \big)(x_1,x_2)\\
&=  b(x_1,x_2) R^{(2)}_k\ast_2 R_j^{(1)} \ast  f(x_1,x_2) -R^{(2)}_k\ast_2 (b\cdot  R_j^{(1)} \ast f)(x_1,x_2)\\
&\quad - R_j^{(1)}\ast  \big( b\cdot R^{(2)}_k \ast_2 f\big) (x_1,x_2) +   R_j^{(1)}\ast R^{(2)}_k\ast_2 (b\cdot f) (x_1,x_2).
\end{align*}
It is direct to see that, by changing of variables, 
\begin{align*}
  R^{(2)}_k\ast_2 R_j^{(1)} \ast  f(x_1,x_2)&= \int  R^{(2)}_k(x_2-z) R_j^{(1)} (x_1-y_1,z-y_2)   f(y_1,y_2)\ dzdy_1dy_2\\
  &= \int  R^{(2)}_k (\tilde z- y_2) R_j^{(1)} (x_1-y_1,x_2-\tilde z)   f(y_1,y_2)\ d\tilde zdy_1dy_2\\
&= \int  R_j^{(1)} (x_1-y_1,x_2-\tilde z)  R^{(2)}_k (\tilde z- y_2)  f(y_1,y_2)\ d\tilde zdy_1dy_2\\
&=  R_j^{(1)} \ast R^{(2)}_k\ast_2 f(x_1,x_2),
\end{align*}
which implies that \eqref{equi com} holds.
\end{proof}

\section{Upper bound of the iterated commutator $[[b,R^{(1)}_i], R^{(2)}_j]_2$}\label{s3}

In this section, we prove Theorem \ref{thm iterated com}, i.e., the upper bound of the iterated commutator $[[b,R^{(1)}_i], R^{(2)}_j]_2$.
As we pointed out earlier, in the flag setting, there is lack of a suitable wavelet basis or Haar basis and hence the approaches in \cite{LPPW,DO} are not available. We establish a fundamental duality argument (Lemma \ref{lemma Tent}) with respect to general flag type area integrals and flag Carleson measures, and then apply the technique of harmonic expansion to obtain the full versions of flag type Carleson measure inequalities (Proposition \ref{prop BMO estimate}), which plays the role of ``paraproducts''. Then, by considering the bilinear form associated with the  iterated commutator $[[b,R^{(1)}_i], R^{(2)}_j]_2$ and by integration by part, we can decompose the bilinear form into a summation of different versions of ``paraproducts''. Then the upper bound of the iterated commutator $[[b,R^{(1)}_i], R^{(2)}_j]_2$
follows from applying Proposition \ref{prop BMO estimate} to each ``paraproducts''.

\subsection{Extension via flag Poisson operator}

For any $f\in L^1(\mathbb{R}^n\times\mathbb{R}^m)$, we
define the flag Poisson integral of $f$ by
\begin{align}\label{poisson integral}
F(x_1,x_2,t_1,t_2):=P_{t_1,t_2}\ast f(x_1,y_2),
\end{align}
where 
\begin{align}\label{flag poisson}
P_{t_1,t_2}(x_1,x_2)=P^{(1)}_{t_1}\ast_{\mathbb R^m}P^{(2)}_{t_2}(x_1,x_2).
\end{align}

Since $P(x_1,x_2)\in L^1(\mathbb{R}^n\times\mathbb{R}^m)$, it easy to see
that $F(x_1,x_2,t_1,t_2)$ is well-defined. Moreover, for any
fixed $t_1$ and $t_2$, $P_{t_1,t_2}\ast f(x_1,x_2)$ is a bounded $C^{\infty}$
function and the
function $F(x_1,x_2,t_1,t_2)$ is harmonic
in $(x_1,x_2,t_1)$ and $(x_2,t_2)$, respectively.  $F(x_1,x_2,t_1,t_2)$ is the flag harmonic extension of $f$ to $\mathbb R^{n+1}_+\times \mathbb R^{m+1}_+$. More precisely,
\begin{align}\label{extension}
\left\{                 
\begin{aligned}
& \Delta_{\mathbb R^{n+m+1} } F(x_1,x_2,t_1,t_2) = (\partial_{t_1}^2 +\Delta_{x_1,x_2})  F(x_1,x_2,t_1,t_2)  =0 \quad {\rm in}\ \mathbb R^{n+m+1}_+; \\
& \Delta_{\mathbb R^{m+1} } F(x_1,x_2,t_1,t_2) = (\partial_{t_2}^2 +\Delta_{x_2})  F(x_1,x_2,t_1,t_2)  =0 \quad {\rm in}\ \mathbb R^{m+1}_+;\\
& \lim_{t_1\to0} \partial_{t_1} F(x_1,x_2,t_1,t_2) = -(\Delta_{x_1,x_2})^{1\over2} P^{(2)}\ast_{\mathbb R^m}f(x_1,x_2) \quad {\rm on}\ \mathbb R^{n+m};\\
& \lim_{t_2\to0} \partial_{t_2} F(x_1,x_2,t_1,t_2) = -(\Delta_{x_2})^{1\over2} P^{(1)}\ast f(x_1,x_2) \quad {\rm on}\ \mathbb R^{n+m};\\
& \lim_{t_1\to0}  F(x_1,x_2,t_1,t_2) = P^{(2)}\ast_{\mathbb R^m}f(x_1,x_2) \quad {\rm on}\ \mathbb R^{n+m};\\
& \lim_{t_2\to0}  F(x_1,x_2,t_1,t_2) = P^{(1)}\ast f(x_1,x_2) \quad {\rm on}\ \mathbb R^{n+m};\\
& \lim_{t_1\to0,\ t_2\to0}  F(x_1,x_2,t_1,t_2) = f(x_1,x_2) \quad {\rm on}\ \mathbb R^{n+m};\\
& \lim_{|(x_1,x_2,t_1)|\to\infty}  F(x_1,x_2,t_1,t_2) = 0;\\
& \lim_{|(x_2,t_2)|\to\infty}  F(x_1,x_2,t_1,t_2) = 0.
\end{aligned}
\right.
\end{align}

We then have the following lemma providing a connection between the boundary values $f$ and the flag harmonic extension $F$.  This follows from the decay of the flag harmonic extensions of $f$ and repeated applications of integration by parts in the variables $t_1$ and $t_2$.
\begin{lem}
For $f\in L^1(\mathbb{R}^n\times\mathbb{R}^m)$, let $F$ be the same as in \eqref{poisson integral}. Then we have
\begin{align}\label{extension integration}
\int_{\mathbb R^n\times\mathbb R^m} f(x_1,x_2)dx_1dx_2 = \int_{\mathbb R^{n+1}_+\times\mathbb R^{m+1}_+} t_1 \partial_{t_1}^2t_2 \partial_{t_2}^2 F(x_1,x_2,t_1,t_2)dx_1dx_2dt_1dt_2. 
\end{align}
\end{lem}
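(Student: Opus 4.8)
The plan is to establish \eqref{extension integration} by repeated integration by parts, reducing the four-fold integral on the right one variable at a time to recover the boundary value $f$ on the left. Write $F = F(x_1,x_2,t_1,t_2)$ for the flag Poisson integral. The starting point is the elementary one-dimensional identity
\[
\int_0^\infty t\,\partial_t^2 G(t)\,dt = -\int_0^\infty \partial_t G(t)\,dt = G(0) - \lim_{t\to\infty} G(t),
\]
valid whenever $G$ and its derivatives decay sufficiently at $t=\infty$ and the boundary terms $t\,\partial_t G(t)$ vanish at both endpoints. The first integration by parts in $t$ gives $\int_0^\infty t\,\partial_t^2 G\,dt = [t\,\partial_t G]_0^\infty - \int_0^\infty \partial_t G\,dt$; the second uses that the antiderivative is just $G$ itself. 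So the scheme is: apply this twice in the $t_1$-variable (with $t_2,x_1,x_2$ frozen), then twice in the $t_2$-variable, using Fubini to justify interchanging the order of integration at each stage.

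Concretely, first I would fix $t_2>0$ and apply the identity in $t_1$ to $G(t_1) = F(x_1,x_2,t_1,t_2)$. The boundary term at $t_1=0$ vanishes because, by the third line of \eqref{extension}, $\partial_{t_1}F$ has a finite limit as $t_1\to 0$, so $t_1\,\partial_{t_1}F\to 0$; the boundary term at $t_1=\infty$ and the decay needed to drop $\lim_{t_1\to\infty} F$ follow from the pointwise Poisson estimates (the Proposition stated just before the flag Poisson kernel, applied to the inner extension $P^{(1)}_{t_1}\ast(P^{(2)}_{t_2}\ast_{\mathbb R^m} f)$, which is an $L^1(\mathbb R^{n+m})$ function for each fixed $t_2$). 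This yields
\[
\int_0^\infty t_1\,\partial_{t_1}^2 F(x_1,x_2,t_1,t_2)\,dt_1 = \lim_{t_1\to 0} F(x_1,x_2,t_1,t_2) = P^{(1)}\ast\!\big(P^{(2)}_{t_2}\ast_{\mathbb R^m} f\big)(x_1,x_2),
\]
using the fifth line of \eqref{extension}; call this $H(x_1,x_2,t_2)$. Next I would integrate $H$ over $(x_1,x_2)$ — since $\int_{\mathbb R^{n+m}} P^{(1)} = 1$ this collapses to $\int_{\mathbb R^m}(P^{(2)}_{t_2}\ast_{\mathbb R^m}\!\int_{\mathbb R^n} f(\cdot,\cdot)\,dx_1)(x_2)\,dx_2$, but it is cleaner to keep $(x_1,x_2)$ and do the $t_2$ integration next. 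Applying the one-dimensional identity in $t_2$ to $H$: the boundary term at $t_2=0$ vanishes because $\partial_{t_2}H$ has a finite limit (fourth line of \eqref{extension}, after applying the operator $P^{(1)}\ast$ and noting it commutes with $\partial_{t_2}$), and the decay at $t_2=\infty$ again comes from the Poisson estimates applied to $P^{(2)}_{t_2}\ast_{\mathbb R^m}(P^{(1)}\ast f)$. This produces $\lim_{t_2\to 0} H = P^{(1)}\ast P^{(2)}\ast_{\mathbb R^m} f = f$ on $\mathbb R^{n+m}$, by the seventh line of \eqref{extension}. A final integration over $x_1,x_2$ gives $\int_{\mathbb R^{n+m}} f$, which is \eqref{extension integration}.

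The main obstacle is the careful justification of the boundary terms and of Fubini: one must check that $t_1\,\partial_{t_1} F \to 0$ as $t_1\to 0$ uniformly enough (and similarly for $t_2$), that $t_1\,\partial_{t_1} F$ and $F$ decay as $t_1\to\infty$ so those boundary terms vanish, and that the various iterated integrals are absolutely convergent so the order of integration may be swapped at each step. All of this is supplied by the pointwise gradient bounds $t_1^{n+m+1}|\nabla_{x_1,x_2,t_1}^{k}F|\le C\|\cdot\|_{L^1}$ (the stated Proposition, applied at each stage to the relevant $L^1$ function $P^{(2)}_{t_2}\ast_{\mathbb R^m} f$ or $P^{(1)}\ast f$) together with the boundary behaviour recorded in \eqref{extension}; the bookkeeping is routine but must be done in the right order — innermost $t_1$ first, then $t_2$, then the two spatial integrations — so that each intermediate function to which the estimates are applied is genuinely in $L^1$ of the appropriate Euclidean space.
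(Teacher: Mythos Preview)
Your proposal is correct and follows essentially the same approach as the paper: repeated integration by parts in $t_1$ and $t_2$, using the decay of the flag Poisson extension at infinity together with the boundary values recorded in \eqref{extension}. The only difference is the order in which the two $t$-integrations are collapsed (you do $t_1$ first, then $t_2$; the paper does $t_2$ first, then $t_1$), which is immaterial.
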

\begin{proof}
We start from the right-hand side of \eqref{extension integration}. We write
\begin{align*}
& \int_{\mathbb R^{n+1}_+\times\mathbb R^{m+1}_+} t_1 \partial_{t_1}^2t_2 \partial_{t_2}^2 F(x_1,x_2,t_1,t_2)dx_1dx_2dt_1dt_2 \\
&=    \int_{\mathbb R^{m+1}_+}   t_2 \partial_{t_2}^2P^{(2)}_{t_2}  \ast_{\mathbb R^m}  \bigg(\int_{\mathbb R^{n+1}_+}    t_1 \partial_{t_1}^2 P^{(1)}_{t_1} * f(x_1,x_2) dx_1dt_1  \bigg) dx_2dt_2\\
&=    \int_{\mathbb R^{m}}    \bigg(\int_{\mathbb R^{n+1}_+}    t_1 \partial_{t_1}^2 P^{(1)}_{t_1} * f(x_1,x_2) dx_1dt_1  \bigg) dx_2,
\end{align*}
where the last equality follows from  decay of the flag harmonic extensions of $f$ and using the integration by part in the variables $t_2$.
To continue, we write the right-hand side of the last equality above as 
\begin{align*}
&     \int_{\mathbb R^{n+m+1}_+}    t_1 \partial_{t_1}^2 P^{(1)}_{t_1} * f(x_1,x_2) dx_1 dx_2dt_1
= \int_{\mathbb R^{n+m}}     f(x_1,x_2) dx_1 dx_2,
\end{align*}
which yields \eqref{extension integration}. Again, the last equality follows from  decay of the flag harmonic extensions of $f$ and using the integration by part in the variables $t_1$.
\end{proof}

\subsection{Flag area functions and estimates}

We also have a more general version of the area function.
\begin{defn}\label{def-of-S-function-SG}
For a function $G(x_1,x_2,t_1,t_2)$ defined on  $\mathbb{R}_{+}^{n+1}\times \mathbb{R}_{+}^{m+1}$, the general flag type Lusin area integral of $G$ is %{\color{red}{(should it be $F$ or $\mathcal{F}$ here?)}}
defined by
\begin{eqnarray}\label{S-function-SG}
&&S_{\mathcal{F},L}(G)(x_1,x_2)\\
&&:=\bigg\{\int_{\mathbb{R}_{+}^{n+1}}\int_{\mathbb{R}_{+}^{m+1}}\chi_{t,s}(x_1-w_1,x_2-w_2)
|G(w_1,w_2,t_1,t_2)|^2 {dw_1dt_1 \over t_1^{n+m+1}}{dw_2dt_2
\over t_2^{m+1}} \bigg\}^{{1\over 2}}.\nonumber
\end{eqnarray}
\end{defn}

\begin{lem}\label{lemma Tent}
Suppose $F(x_1,x_2,t_1,t_2)$ and $G(x_1,x_2,t_1,t_2)$ are defined on  $\mathbb{R}_{+}^{n+1}\times \mathbb{R}_{+}^{m+1}$.  Then the following estimate holds:
\begin{align}\label{Tent}
&\int_{\mathbb{R}_{+}^{n+1}}\int_{\mathbb{R}_{+}^{m+1}} F(x_1,x_2,t_1,t_2) G(x_1,x_2,t_1,t_2)\, dx_1dx_2dt_1dt_2\\
&\leq C \sup_{\Omega\subset\mathbb R^n\times\mathbb R^m} \bigg( {1\over |\Omega|} \int_{T(\Omega)} t_1\,t_2\, |F(y_1,y_2,t_1,t_2)|^2\, dy_1dy_2dt_1dt_2 \bigg)^{1/2} \nonumber\\
&\quad\times\! \int_{\mathbb{R}^{n}}\int_{\mathbb{R}^{m}} \bigg( \int_{\mathbb{R}_{+}^{n+1}}\int_{\mathbb{R}_{+}^{m+1}} \chi_{t_1,t_2}(x_1-y_1,x_2-y_2)  |G(y_1,y_2,t_1,t_2)|^2\, {dy_1dy_2dt_1dt_2\over t_1^{n+m+1}t_2^{m+1}} \bigg)^{1/2} \, dx_1dx_2.\nonumber
\end{align}
\end{lem}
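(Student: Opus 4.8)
The plan is to reduce \eqref{Tent} to the classical tent-space duality of Coifman--Meyer--Stein, applied in two stages according to the flag structure. First I would introduce, for the outer (second-parameter) integration, the ``stopping'' open sets in $\mathbb R^m$ and the standard decomposition of $\mathbb R^{m+1}_+$ into tent regions; the key point is that the flag constraint $\ell(I)\le\ell(J)$ built into the definition of $T(\Omega)$ is exactly what allows the one-parameter tent-space estimate to be iterated cleanly. Concretely, I would fix the variables $(y_1,t_1)$ and first run the duality pairing between the function $(y_2,t_2)\mapsto t_1t_2\,F$ and $(y_2,t_2)\mapsto G/(t_1t_2)$ on $\mathbb R^{m+1}_+$, obtaining on the right the $S_{\mathcal F,L}$-type square function in the $y_2$ variable together with the sup of the Carleson-type average of $|F|^2$; then I would integrate in $(y_1,t_1)$ and repeat the same argument in the first parameter.

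The key steps, in order, are: (1) rewrite the left side as $\int F\cdot G = \int (t_1t_2 F)\cdot(G/(t_1t_2))\,dy_1dy_2dt_1dt_2$ so that the ``square-function'' factor $G/(t_1 t_2)$ matches the weights $t_1^{-(n+m+1)}t_2^{-(m+1)}$ appearing in $S_{\mathcal F,L}$ after one uses that $\chi_{t_1,t_2}$ has mass comparable to $t_1^{n+m}t_2^{m}$; (2) apply the one-parameter Carleson embedding / tent-space $L^1$--$L^\infty$ duality in the $(y_2,t_2)$ variables, whose output is pointwise in $(y_1,t_1,x_2)$ a bound by $\big(\sup_{\omega}\frac1{|\omega|}\int_{\widehat\omega} t_2|F|^2\big)^{1/2}$ times the $y_2$-square function of $G$; (3) observe that the supremum produced in (2), when one now takes the further supremum over $(y_1,t_1)$-tents, is dominated by the flag Carleson sup $\sup_\Omega\frac1{|\Omega|}\int_{T(\Omega)}t_1t_2|F|^2$ appearing on the right of \eqref{Tent} — here is where the flag geometry $T(\Omega)=\bigcup_{R\subset\Omega}T(R)$ with $\ell(I)\le\ell(J)$ is used; (4) integrate the resulting inequality in $(y_1,t_1,x_2)$ and apply the one-parameter tent-space duality a second time, now in the $(y_1,t_1)$ variables with $x_2$ as a frozen parameter, collecting the full flag square function $S_{\mathcal F,L}(G)$ and integrating in $x_1$; (5) absorb all constants and assemble \eqref{Tent}.

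I expect the main obstacle to be step (3): making rigorous the claim that the interior supremum over second-parameter tents, after being ``attached'' to first-parameter tents, is controlled by a single supremum over the flag tent regions $T(\Omega)$. In the true product (bi-parameter) setting this is the standard but delicate passage that requires the Journé-type covering lemma or an argument handling arbitrary open sets rather than rectangles; in the flag setting the side-length restriction $\ell(I)\le\ell(J)$ both simplifies some of this (the rectangles are not of fully arbitrary shape) and complicates the bookkeeping (one cannot shrink $J$ to a point to reduce to one parameter, as noted in the introduction). I would handle it by working directly with the definition $T(\Omega)=\bigcup_{R\subset\Omega}T(R)$, decomposing a general open set $\Omega$ into maximal flag rectangles and using that the dilated regions $T(R)$ have bounded overlap, so that the Carleson averages over $\Omega$ control the iterated tent averages up to a dimensional constant. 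The remaining steps (1), (2), (4), (5) are routine once (3) is in place, being nothing more than Fubini together with two applications of the classical one-parameter tent-space pairing inequality.
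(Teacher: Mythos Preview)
Your proposed iteration of one-parameter tent-space dualities has a genuine gap at the junction of steps (2)--(4). After applying the one-parameter $T^1$--$T^\infty$ pairing in $(y_2,t_2)$ with $(y_1,t_1)$ frozen, what you obtain is (schematically)
\[
\int_{\mathbb R^{m+1}_+} FG\,dy_2\,dt_2 \le C\,\mathrm{Car}_2(y_1,t_1)^{1/2}\cdot\int_{\mathbb R^m} S_2(G)(y_1,t_1,x_2)\,dx_2,
\]
where $\mathrm{Car}_2(y_1,t_1)=\sup_{J}|J|^{-1}\int_{\widehat J}t_2\,|F(y_1,\cdot,t_1,\cdot)|^2$. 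Both factors on the right depend on $(y_1,t_1)$, and their product is \emph{not} of the form $F'\cdot G'$ to which a second tent-space duality can be applied. If instead you pull the Carleson factor out by taking $\sup_{(y_1,t_1)}\mathrm{Car}_2(y_1,t_1)$, this pointwise supremum has no reason to be controlled by the flag Carleson quantity $\sup_\Omega|\Omega|^{-1}\int_{T(\Omega)}t_1t_2|F|^2$: the former is a supremum over \emph{points} $(y_1,t_1)$ together with intervals $J\subset\mathbb R^m$, while the latter averages over first-parameter tents as well before taking the supremum. Your step (3) as written bounds an iterated \emph{rectangular} Carleson supremum by the open-set one (the easy direction), but what the iteration actually produces after step (2) is not even the rectangular supremum --- it is a pointwise-in-$(y_1,t_1)$ object that has not yet been averaged over any first-parameter tent, and once you integrate in $(y_1,t_1)$ you have lost the multiplicative structure needed for a second duality.

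The paper avoids this obstruction by not iterating at all: it runs a single global decomposition driven by the level sets $\Omega_k=\{S_{\mathcal F,L}(G)>2^k\}$ of the \emph{full} flag square function, partitions $\mathbb R^{n+1}_+\times\mathbb R^{m+1}_+$ into the Carleson boxes $T(R)$ with $R$ ranging over flag rectangles selected by a stopping rule relative to $\Omega_k$ and $\Omega_{k+1}$, applies Cauchy--Schwarz on each box, and then invokes the open-set flag Carleson supremum directly together with the maximal-function bound $|\widetilde\Omega_k|\lesssim|\Omega_k|$ to sum the resulting series $\sum_k 2^k|\Omega_k|\lesssim\|S_{\mathcal F,L}(G)\|_{L^1}$. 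This is the Chang--Fefferman atomic mechanism transported to the flag geometry; the supremum over open sets $\Omega$ enters exactly once, already in its correct form, and nowhere does one attempt to rebuild it from iterated one-parameter data.
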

\begin{proof}
Suppose both factors on the right-hand side above are finite, since otherwise there is nothing to prove.
We also note that the second factor is actually $\|S_{\mathcal F}(G)\|_{L^1(\mathbb R^n\times\mathbb R^m)}$.

We now let
$$ \Omega_k :=\{ (x_1,x_2)\in \mathbb{R}^n\times\mathbb{R}^m:\  S_{\mathcal{F},L}(G)(x_1,x_2) > 2^k\} $$
and define
$$ B_k:= \{ R=I_1\times I_2:\ |(I_1\times I_2)\cap \Omega_k|> {1\over 2}|I_1\times I_2|,\ |(I_1\times I_2)\cap \Omega_{k+1}|\leq {1\over 2}|I_1\times I_2| \},$$
where $I_1$ and $I_2$ are dyadic cubes in $\mathbb R^n$ and $\mathbb R^m$ with side-lengths $\ell(I)$ and $\ell(J)$ satisfying $\ell(I)\leq \ell(J)$. 
%$2^{-i_1}$ and $2^{-i_1}+2^{-i_2}$, respectively.
Moreover, we define
$$ \Omega_k=\bigcup_{R\in B_k} R $$
and
$$ \widetilde\Omega_k=\Big\{(x_1,x_2)\in \mathbb R^n\times\mathbb R^m:\ M_{flag}(\chi_{\Omega_k})(x_1,x_2)>{1\over 2}\Big\}. $$
Next, we have 
\begin{align*}
&\int_{\mathbb{R}_{+}^{n+1}}\int_{\mathbb{R}_{+}^{m+1}} F(x_1,x_2,t_1,t_2) G(x_1,x_2,t_1,t_2)\, dx_1dx_2dt_1dt_2\\
&=\sum_k \sum_{R\in B_k}\int_{T(R)} \sqrt{t_1t_2}F(x_1,x_2,t_1,t_2) \, {G(x_1,x_2,t_1,t_2)\over \sqrt{t_1t_2}}dx_1dx_2dt_1dt_2\\
&\leq\sum_k \bigg(\sum_{R\in B_k}\int_{T(R)} t_1t_2|F(x_1,x_2,t_1,t_2)|^2 dx_1dx_2dt_1dt_2\bigg)^{1/2}\\ 
&\quad\quad\quad\times \bigg(\sum_{R\in B_k}\int_{T(R)} |G(x_1,x_2,t_1,t_2)|^2\, {dx_1dx_2dt_1dt_2\over t_1t_2}\bigg)^{1/2}\\
&=\sum_k \bigg({1\over |\Omega_k|}\sum_{R\in B_k}\int_{T(R)} t_1t_2|F(x_1,x_2,t_1,t_2)|^2 dx_1dx_2dt_1dt_2\bigg)^{1/2}\\ 
&\quad\quad\quad\times \bigg(|\Omega_k|\sum_{R\in B_k}\int_{T(R)} |G(x_1,x_2,t_1,t_2)|^2\, {dx_1dx_2dt_1dt_2\over t_1t_2}\bigg)^{1/2}\\
&\leq\sum_k \bigg({1\over |\Omega_k|}\int_{T(\Omega_k)} t_1t_2|F(x_1,x_2,t_1,t_2)|^2 dx_1dx_2dt_1dt_2\bigg)^{1/2}\\ 
&\quad\quad\quad\times \bigg(|\widetilde\Omega_k|\sum_{R\in B_k}\int_{T(R)} |G(x_1,x_2,t_1,t_2)|^2\, {dx_1dx_2dt_1dt_2\over t_1t_2}\bigg)^{1/2}\\
&\leq \sup_{\Omega\subset\mathbb R^n\times\mathbb R^m} \bigg({1\over |\Omega|}\int_{T(\Omega)} t_1t_2|F(x_1,x_2,t_1,t_2)|^2 dx_1dx_2dt_1dt_2\bigg)^{1/2}\\ 
&\quad\quad\quad\times \sum_k \bigg(|\widetilde\Omega_k|\sum_{R\in B_k}\int_{T(R)} |G(x_1,x_2,t_1,t_2)|^2\, {dx_1dx_2dt_1dt_2\over t_1t_2}\bigg)^{1/2}.
 \end{align*}

As for the second factor in the last inequality above, note that
\begin{align*}
&  2^{2k} |\widetilde\Omega_k\backslash \Omega_k|\\
&\geq\int_{\widetilde\Omega_k\backslash \Omega_k} S_{\mathcal{F},L}(G)(x_1,x_2)^2  \, dx_1dx_2\\
&=\int_{\widetilde\Omega_k\backslash \Omega_k}  \int_{\mathbb{R}_{+}^{n+1}}\int_{\mathbb{R}_{+}^{m+1}} \chi_{t_1,t_2}(x_1-y_1,x_2-y_2)  |G(y_1,y_2,t_1,t_2)|^2\, {dy_1dy_2dt_1dt_2\over t_1^{n+m+1}t_2^{m+1}}  \, dx_1dx_2\\
&= \int_{\mathbb{R}_{+}^{n+1}}\int_{\mathbb{R}_{+}^{m+1}} \int_{\widetilde\Omega_k\backslash \Omega_k} \chi_{t_1,t_2}(x_1-y_1,x_2-y_2) \, dx_1dx_2\, |G(y_1,y_2,t_1,t_2)|^2\, {dy_1dy_2dt_1dt_2\over t_1^{n+m+1}t_2^{m+1}} \\
&\approx \int_{\mathbb{R}_{+}^{n+1}}\int_{\mathbb{R}_{+}^{m+1}}  |G(y_1,y_2,t_1,t_2)|^2\, {dy_1dy_2dt_1dt_2\over t_1t_2} \\
&\geq \sum_{R\in B_k}\int_{T(R)} |G(x_1,x_2,t_1,t_2)|^2\, {dx_1dx_2dt_1dt_2\over t_1t_2}.
\end{align*}

Thus, we have
\begin{align*}
&\int_{\mathbb{R}_{+}^{n+1}}\int_{\mathbb{R}_{+}^{m+1}} F(x_1,x_2,t_1,t_2) G(x_1,x_2,t_1,t_2)\, dx_1dx_2dt_1dt_2\\
&\leq \sup_{\Omega\subset\mathbb R^n\times\mathbb R^m} \bigg({1\over |\Omega|}\int_{T(\Omega)} t_1t_2|F(x_1,x_2,t_1,t_2)|^2 dx_1dx_2dt_1dt_2\bigg)^{1/2}\\ 
&\quad\quad\quad\times \sum_k \bigg(|\widetilde\Omega_k|2^{2k} |\widetilde\Omega_k\backslash \Omega_k|\bigg)^{1/2}\\
&\leq \sup_{\Omega\subset\mathbb R^n\times\mathbb R^m} \bigg({1\over |\Omega|}\int_{T(\Omega)} |t_1t_2F(x_1,x_2,t_1,t_2)|^2 {dx_1dx_2dt_1dt_2\over t_1t_2}\bigg)^{1/2}\times \sum_k |\Omega_k|2^{k}\\
&\leq \sup_{\Omega\subset\mathbb R^n\times\mathbb R^m} \bigg({1\over |\Omega|}\int_{T(\Omega)} |t_1t_2F(x_1,x_2,t_1,t_2)|^2 {dx_1dx_2dt_1dt_2\over t_1t_2}\bigg)^{1/2}\times \|S_{\mathcal{F},L}(G)\|_{L^1(\mathbb R^n\times\mathbb R^m)},
 \end{align*}
which gives \eqref{Tent}.  This completes the proof of the Lemma \ref{lemma Tent}.
\end{proof}

From Lemma \ref{lemma Tent} above  and the definition of ${\rm BMO}_{\mathcal{F}}(\mathbb{R}^n\times \mathbb{R}^m)$, we can obtain the following Corollary  immediately.
\begin{cor}\label{cor Tent}
Suppose  $G(x_1,x_2,t_1,t_2)$ is defined on  $\mathbb{R}_{+}^{n+1}\times \mathbb{R}_{+}^{m+1}$, and 
$F(x_1,x_2,t_1,t_2):= P_{t_1,t_2}\ast f(x_1,x_2)$, where $f\in {\rm BMO}_{\mathcal{F}}(\mathbb{R}^{n}\times \mathbb{R}^{m})$.  Then we have:
\begin{align}\label{BMO Tent}
&\int_{\mathbb{R}_{+}^{n+1}}\int_{\mathbb{R}_{+}^{m+1}} |\nabla^{(1)}\nabla^{(2)}F(x_1,x_2,t_1,t_2)|\,| G(x_1,x_2,t_1,t_2)|\, dx_1dx_2dt_1dt_2\\
&\leq C \|f\|_{{\rm BMO}_{\mathcal{F}}(\mathbb{R}^{n}\times \mathbb{R}^{m})} \|S_{\mathcal{F},L}(G)\|_{L^1(\mathbb{R}^{n}\times \mathbb{R}^{m})}.\nonumber
\end{align}
\end{cor}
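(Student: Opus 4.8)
The plan is to derive Corollary~\ref{cor Tent} as a direct consequence of Lemma~\ref{lemma Tent} applied with $F$ replaced by $\nabla^{(1)}\nabla^{(2)}F$ (more precisely, by each scalar component of the mixed gradient), combined with the characterization of the flag BMO norm in Definition~\ref{def flag BMO}. First I would observe that the right-hand side of \eqref{Tent}, as already noted in the proof of Lemma~\ref{lemma Tent}, is exactly a supremum of a flag Carleson-type quantity times $\|S_{\mathcal{F},L}(G)\|_{L^1}$, so the only thing to check is that the Carleson factor associated to $\nabla^{(1)}\nabla^{(2)}F$ is controlled by $\|f\|_{{\rm BMO}_{\mathcal{F}}}$.

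The key step is to match the two Carleson quantities. In Lemma~\ref{lemma Tent} the Carleson factor has the form
\[
\sup_{\Omega}\Bigl({1\over|\Omega|}\int_{T(\Omega)} t_1\,t_2\,|\widetilde F(y_1,y_2,t_1,t_2)|^2\,dy_1dy_2dt_1dt_2\Bigr)^{1/2},
\]
with $\widetilde F = \nabla^{(1)}\nabla^{(2)}F$, whereas Definition~\ref{def flag BMO} writes $\|f\|_{{\rm BMO}_{\mathcal{F}}}$ as
\[
\sup_{\Omega}\Bigl({1\over|\Omega|}\int_{T(\Omega)} |t_1\nabla^{(1)}t_2\nabla^{(2)}u(w_1,w_2,t_1,t_2)|^2\,{dw_1dt_1\,dw_2dt_2\over t_1 t_2}\Bigr)^{1/2}.
\]
Since $u = P_{t_1,t_2}\ast f = F$, the integrand $|t_1\nabla^{(1)}t_2\nabla^{(2)}F|^2\,(t_1t_2)^{-1} = t_1 t_2\,|\nabla^{(1)}\nabla^{(2)}F|^2$, so the two expressions are literally identical. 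Thus the Carleson factor equals $\|f\|_{{\rm BMO}_{\mathcal{F}}(\mathbb R^n\times\mathbb R^m)}$, and plugging $\widetilde F$ into \eqref{Tent} together with the triangle inequality over the finitely many components of $\nabla^{(1)}\nabla^{(2)}$ yields \eqref{BMO Tent} with an absolute constant $C$.

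A minor technical point I would address is the absolute-value placement: \eqref{Tent} is stated for a signed bilinear integral $\int FG$, but \eqref{BMO Tent} has $\int|\nabla^{(1)}\nabla^{(2)}F|\,|G|$. This is handled by applying \eqref{Tent} componentwise with $F$ taken to be $\pm\partial^{(1)}_a\partial^{(2)}_b F$ (choosing signs so each term is nonnegative) and $G$ replaced by $|G|$, noting that $S_{\mathcal{F},L}(|G|) = S_{\mathcal{F},L}(G)$ since the area integral only sees $|G|^2$; then summing over the index pairs $(a,b)$ and absorbing the number of components into $C$. I do not expect any real obstacle here — the corollary is essentially a re-reading of Lemma~\ref{lemma Tent} once one recognizes that the flag BMO norm is precisely the flag Carleson norm of the mixed Poisson gradient; the only care needed is bookkeeping of the $t_1t_2$ weights and the vector-valued gradient.
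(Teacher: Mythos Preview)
Your proposal is correct and matches the paper's approach exactly: the paper states the corollary follows ``immediately'' from Lemma~\ref{lemma Tent} together with Definition~\ref{def flag BMO}, and your write-up simply unpacks this by identifying the Carleson factor in \eqref{Tent} (applied to $\nabla^{(1)}\nabla^{(2)}F$) with the flag BMO norm and handling the absolute values and vector components by routine bookkeeping. There is nothing more to add.
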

Moreover, based on Lemma \ref{lemma Tent}, we can also establish the following estimates.
\begin{prop}\label{prop BMO estimate}
Suppose $F(x_1,x_2,t_1,t_2)=P_{t_1,t_2}*f(x_1,x_2)$, $G(x_1,x_2,t_1,t_2)=P_{t_1,t_2}*g(x_1,x_2)$, and $B(x_1,x_2,t_1,t_2)=P_{t_1,t_2}*b(x_1,x_2)$. Then we have
\begin{enumerate}
\item \begin{align}\label{BMO 1}
& \int_{\mathbb R^{n+1}_+\times\mathbb R^{m+1}_+} t_1 t_2  | \nabla^{(1)} \nabla^{(2)} B(x_1,x_2,t_1,t_2) |\ |\nabla_{x_1,x_2} \nabla_{x_2}   \nabla^{(1)} \nabla^{(2)} G(x_1,x_2,t_1,t_2) |\\
&\quad\times | \nabla^{(1)} \nabla^{(2)} F(x_1,x_2,t_1,t_2) |  dx_1dx_2dt_1dt_2 \nonumber\\
&\leq C \|b\|_{{\rm BMO}_{\mathcal{F}}(\mathbb{R}^{n}\times \mathbb{R}^{m})}\| (-\Delta_{x_1,x_2})^{1\over2}(-\Delta_{x_2})^{1\over2} g \|_{L^p(\mathbb{R}^{n}\times \mathbb{R}^{m})}\| (-\Delta_{x_1,x_2})^{1\over2}(-\Delta_{x_2})^{1\over2} f \|_{L^{p'}(\mathbb{R}^{n}\times \mathbb{R}^{m})};\nonumber
\end{align}
\item \begin{align}\label{BMO 2}
& \int_{\mathbb R^{n+1}_+\times\mathbb R^{m+1}_+} t_1 t_2  | \nabla^{(1)} \nabla^{(2)} B(x_1,x_2,t_1,t_2) |\ |\nabla_{x_1,x_2} \nabla_{x_2}   \nabla^{(1)} \nabla^{(2)} G(x_1,x_2,t_1,t_2) |\\
&\quad\times | \nabla^{(1)}  F(x_1,x_2,t_1,t_2) |  dx_1dx_2dt_1dt_2 \nonumber\\
&\leq C \|b\|_{{\rm BMO}_{\mathcal{F}}(\mathbb{R}^{n}\times \mathbb{R}^{m})}\| (-\Delta_{x_1,x_2})^{1\over2}(-\Delta_{x_2})^{1\over2} g \|_{L^p(\mathbb{R}^{n}\times \mathbb{R}^{m})}\| (-\Delta_{x_1,x_2})^{1\over2} f \|_{L^{p'}(\mathbb{R}^{n}\times \mathbb{R}^{m})};\nonumber
\end{align}
\item \begin{align}\label{BMO 3}
& \int_{\mathbb R^{n+1}_+\times\mathbb R^{m+1}_+} t_1 t_2  | \nabla^{(1)} \nabla^{(2)} B(x_1,x_2,t_1,t_2) |\ |\nabla_{x_1,x_2} \nabla_{x_2}   \nabla^{(1)} \nabla^{(2)} G(x_1,x_2,t_1,t_2) |\\
&\quad\times |  \nabla^{(2)} F(x_1,x_2,t_1,t_2) |  dx_1dx_2dt_1dt_2 \nonumber\\
&\leq C \|b\|_{{\rm BMO}_{\mathcal{F}}(\mathbb{R}^{n}\times \mathbb{R}^{m})}\| (-\Delta_{x_1,x_2})^{1\over2}(-\Delta_{x_2})^{1\over2} g \|_{L^p(\mathbb{R}^{n}\times \mathbb{R}^{m})}\| (-\Delta_{x_2})^{1\over2} f \|_{L^{p'}(\mathbb{R}^{n}\times \mathbb{R}^{m})};\nonumber
\end{align}
\item 
\begin{align}\label{BMO 4}
& \int_{\mathbb R^{n+1}_+\times\mathbb R^{m+1}_+} t_1 t_2  | \nabla^{(1)} \nabla^{(2)} B(x_1,x_2,t_1,t_2) |\ |\nabla_{x_1,x_2}    \nabla^{(1)} \nabla^{(2)} G(x_1,x_2,t_1,t_2) |\\
&\quad\times |  F(x_1,x_2,t_1,t_2) |  dx_1dx_2dt_1dt_2 \nonumber\\
&\leq C \|b\|_{{\rm BMO}_{\mathcal{F}}(\mathbb{R}^{n}\times \mathbb{R}^{m})}\| (-\Delta_{x_1,x_2})^{1\over2} g \|_{L^p(\mathbb{R}^{n}\times \mathbb{R}^{m})}\|  f \|_{L^{p'}(\mathbb{R}^{n}\times \mathbb{R}^{m})};\nonumber
\end{align}

\item \begin{align}\label{BMO 5}
& \int_{\mathbb R^{n+1}_+\times\mathbb R^{m+1}_+} t_1 t_2  | \nabla^{(1)} \nabla^{(2)} B(x_1,x_2,t_1,t_2) |\ |\nabla_{x_1,x_2}    \nabla^{(1)} \nabla^{(2)} G(x_1,x_2,t_1,t_2) |\\
&\quad\times | \nabla^{(1)} \nabla^{(2)} F(x_1,x_2,t_1,t_2) |  dx_1dx_2dt_1dt_2 \nonumber\\
&\leq C \|b\|_{{\rm BMO}_{\mathcal{F}}(\mathbb{R}^{n}\times \mathbb{R}^{m})}\| (-\Delta_{x_1,x_2})^{1\over2}g \|_{L^p(\mathbb{R}^{n}\times \mathbb{R}^{m})}\| (-\Delta_{x_1,x_2})^{1\over2}(-\Delta_{x_2})^{1\over2} f \|_{L^{p'}(\mathbb{R}^{n}\times \mathbb{R}^{m})};\nonumber
\end{align}
\item \begin{align}\label{BMO 6}
& \int_{\mathbb R^{n+1}_+\times\mathbb R^{m+1}_+} t_1 t_2  | \nabla^{(1)} \nabla^{(2)} B(x_1,x_2,t_1,t_2) |\ |\nabla_{x_1,x_2}    \nabla^{(1)} \nabla^{(2)} G(x_1,x_2,t_1,t_2) |\\
&\quad\times | \nabla^{(1)}  F(x_1,x_2,t_1,t_2) |  dx_1dx_2dt_1dt_2 \nonumber\\
&\leq C \|b\|_{{\rm BMO}_{\mathcal{F}}(\mathbb{R}^{n}\times \mathbb{R}^{m})}\| (-\Delta_{x_1,x_2})^{1\over2} g \|_{L^p(\mathbb{R}^{n}\times \mathbb{R}^{m})}\| (-\Delta_{x_1,x_2})^{1\over2} f \|_{L^{p'}(\mathbb{R}^{n}\times \mathbb{R}^{m})};\nonumber
\end{align}
\item \begin{align}\label{BMO 7}
& \int_{\mathbb R^{n+1}_+\times\mathbb R^{m+1}_+} t_1 t_2  | \nabla^{(1)} \nabla^{(2)} B(x_1,x_2,t_1,t_2) |\ |\nabla_{x_1,x_2}    \nabla^{(1)} \nabla^{(2)} G(x_1,x_2,t_1,t_2) |\\
&\quad\times |  \nabla^{(2)} F(x_1,x_2,t_1,t_2) |  dx_1dx_2dt_1dt_2 \nonumber\\
&\leq C \|b\|_{{\rm BMO}_{\mathcal{F}}(\mathbb{R}^{n}\times \mathbb{R}^{m})}\| (-\Delta_{x_1,x_2})^{1\over2} g \|_{L^p(\mathbb{R}^{n}\times \mathbb{R}^{m})}\| (-\Delta_{x_2})^{1\over2} f \|_{L^{p'}(\mathbb{R}^{n}\times \mathbb{R}^{m})};\nonumber
\end{align}
\item 
\begin{align}\label{BMO 8}
& \int_{\mathbb R^{n+1}_+\times\mathbb R^{m+1}_+} t_1 t_2  | \nabla^{(1)} \nabla^{(2)} B(x_1,x_2,t_1,t_2) |\ |\nabla_{x_1,x_2}    \nabla^{(1)} \nabla^{(2)} G(x_1,x_2,t_1,t_2) |\\
&\quad\times |  F(x_1,x_2,t_1,t_2) |  dx_1dx_2dt_1dt_2 \nonumber\\
&\leq C \|b\|_{{\rm BMO}_{\mathcal{F}}(\mathbb{R}^{n}\times \mathbb{R}^{m})}\| (-\Delta_{x_1,x_2})^{1\over2}g \|_{L^p(\mathbb{R}^{n}\times \mathbb{R}^{m})}\|  f \|_{L^{p'}(\mathbb{R}^{n}\times \mathbb{R}^{m})};\nonumber
\end{align}

\item \begin{align}\label{BMO 9}
& \int_{\mathbb R^{n+1}_+\times\mathbb R^{m+1}_+} t_1 t_2  | \nabla^{(1)} \nabla^{(2)} B(x_1,x_2,t_1,t_2) |\ |\nabla_{x_2}    \nabla^{(1)} \nabla^{(2)} G(x_1,x_2,t_1,t_2) |\\
&\quad\times | \nabla^{(1)} \nabla^{(2)} F(x_1,x_2,t_1,t_2) |  dx_1dx_2dt_1dt_2 \nonumber\\
&\leq C \|b\|_{{\rm BMO}_{\mathcal{F}}(\mathbb{R}^{n}\times \mathbb{R}^{m})}\| (-\Delta_{x_2})^{1\over2}g \|_{L^p(\mathbb{R}^{n}\times \mathbb{R}^{m})}\| (-\Delta_{x_1,x_2})^{1\over2}(-\Delta_{x_2})^{1\over2} f \|_{L^{p'}(\mathbb{R}^{n}\times \mathbb{R}^{m})};\nonumber
\end{align}
\item \begin{align}\label{BMO 10}
& \int_{\mathbb R^{n+1}_+\times\mathbb R^{m+1}_+} t_1 t_2  | \nabla^{(1)} \nabla^{(2)} B(x_1,x_2,t_1,t_2) |\ |\nabla_{x_2}    \nabla^{(1)} \nabla^{(2)} G(x_1,x_2,t_1,t_2) |\\
&\quad\times | \nabla^{(1)}  F(x_1,x_2,t_1,t_2) |  dx_1dx_2dt_1dt_2 \nonumber\\
&\leq C \|b\|_{{\rm BMO}_{\mathcal{F}}(\mathbb{R}^{n}\times \mathbb{R}^{m})}\| (-\Delta_{x_2})^{1\over2} g \|_{L^p(\mathbb{R}^{n}\times \mathbb{R}^{m})}\| (-\Delta_{x_1,x_2})^{1\over2} f \|_{L^{p'}(\mathbb{R}^{n}\times \mathbb{R}^{m})};\nonumber
\end{align}
\item \begin{align}\label{BMO 11}
& \int_{\mathbb R^{n+1}_+\times\mathbb R^{m+1}_+} t_1 t_2  | \nabla^{(1)} \nabla^{(2)} B(x_1,x_2,t_1,t_2) |\ |\nabla_{x_2}    \nabla^{(1)} \nabla^{(2)} G(x_1,x_2,t_1,t_2) |\\
&\quad\times |  \nabla^{(2)} F(x_1,x_2,t_1,t_2) |  dx_1dx_2dt_1dt_2 \nonumber\\
&\leq C \|b\|_{{\rm BMO}_{\mathcal{F}}(\mathbb{R}^{n}\times \mathbb{R}^{m})}\| (-\Delta_{x_2})^{1\over2} g \|_{L^p(\mathbb{R}^{n}\times \mathbb{R}^{m})}\| (-\Delta_{x_2})^{1\over2} f \|_{L^{p'}(\mathbb{R}^{n}\times \mathbb{R}^{m})};\nonumber
\end{align}
\item 
\begin{align}\label{BMO 12}
& \int_{\mathbb R^{n+1}_+\times\mathbb R^{m+1}_+} t_1 t_2  | \nabla^{(1)} \nabla^{(2)} B(x_1,x_2,t_1,t_2) |\ |\nabla_{x_2}    \nabla^{(1)} \nabla^{(2)} G(x_1,x_2,t_1,t_2) |\\
&\quad\times |  F(x_1,x_2,t_1,t_2) |  dx_1dx_2dt_1dt_2 \nonumber\\
&\leq C \|b\|_{{\rm BMO}_{\mathcal{F}}(\mathbb{R}^{n}\times \mathbb{R}^{m})}\| (-\Delta_{x_2})^{1\over2}g \|_{L^p(\mathbb{R}^{n}\times \mathbb{R}^{m})}\|  f \|_{L^{p'}(\mathbb{R}^{n}\times \mathbb{R}^{m})};\nonumber
\end{align}

\item \begin{align}\label{BMO 13}
& \int_{\mathbb R^{n+1}_+\times\mathbb R^{m+1}_+} t_1 t_2  | \nabla^{(1)} \nabla^{(2)} B(x_1,x_2,t_1,t_2) |\ |    \nabla^{(1)} \nabla^{(2)} G(x_1,x_2,t_1,t_2) |\\
&\quad\times | \nabla^{(1)} \nabla^{(2)} F(x_1,x_2,t_1,t_2) |  dx_1dx_2dt_1dt_2 \nonumber\\
&\leq C \|b\|_{{\rm BMO}_{\mathcal{F}}(\mathbb{R}^{n}\times \mathbb{R}^{m})}\| g \|_{L^p(\mathbb{R}^{n}\times \mathbb{R}^{m})}\| (-\Delta_{x_1,x_2})^{1\over2}(-\Delta_{x_2})^{1\over2} f \|_{L^{p'}(\mathbb{R}^{n}\times \mathbb{R}^{m})};\nonumber
\end{align}
\item \begin{align}\label{BMO 14}
& \int_{\mathbb R^{n+1}_+\times\mathbb R^{m+1}_+} t_1 t_2  | \nabla^{(1)} \nabla^{(2)} B(x_1,x_2,t_1,t_2) |\ |    \nabla^{(1)} \nabla^{(2)} G(x_1,x_2,t_1,t_2) |\\
&\quad\times | \nabla^{(1)}  F(x_1,x_2,t_1,t_2) |  dx_1dx_2dt_1dt_2 \nonumber\\
&\leq C \|b\|_{{\rm BMO}_{\mathcal{F}}(\mathbb{R}^{n}\times \mathbb{R}^{m})}\| g \|_{L^p(\mathbb{R}^{n}\times \mathbb{R}^{m})}\| (-\Delta_{x_1,x_2})^{1\over2} f \|_{L^{p'}(\mathbb{R}^{n}\times \mathbb{R}^{m})};\nonumber
\end{align}
\item \begin{align}\label{BMO 15}
& \int_{\mathbb R^{n+1}_+\times\mathbb R^{m+1}_+} t_1 t_2  | \nabla^{(1)} \nabla^{(2)} B(x_1,x_2,t_1,t_2) |\ |    \nabla^{(1)} \nabla^{(2)} G(x_1,x_2,t_1,t_2) |\\
&\quad\times |  \nabla^{(2)} F(x_1,x_2,t_1,t_2) |  dx_1dx_2dt_1dt_2 \nonumber\\
&\leq C \|b\|_{{\rm BMO}_{\mathcal{F}}(\mathbb{R}^{n}\times \mathbb{R}^{m})}\|  g \|_{L^p(\mathbb{R}^{n}\times \mathbb{R}^{m})}\| (-\Delta_{x_2})^{1\over2} f \|_{L^{p'}(\mathbb{R}^{n}\times \mathbb{R}^{m})};\nonumber
\end{align}
\item 
\begin{align}\label{BMO 16}
& \int_{\mathbb R^{n+1}_+\times\mathbb R^{m+1}_+} t_1 t_2  | \nabla^{(1)} \nabla^{(2)} B(x_1,x_2,t_1,t_2) |\ |    \nabla^{(1)} \nabla^{(2)} G(x_1,x_2,t_1,t_2) |\\
&\quad\times |  F(x_1,x_2,t_1,t_2) |  dx_1dx_2dt_1dt_2 \nonumber\\
&\leq C \|b\|_{{\rm BMO}_{\mathcal{F}}(\mathbb{R}^{n}\times \mathbb{R}^{m})}\| g \|_{L^p(\mathbb{R}^{n}\times \mathbb{R}^{m})}\|  f \|_{L^{p'}(\mathbb{R}^{n}\times \mathbb{R}^{m})};\nonumber
\end{align}
\end{enumerate}

\end{prop}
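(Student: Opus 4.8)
The plan is to derive all sixteen estimates \eqref{BMO 1}--\eqref{BMO 16} uniformly from the single tent-space duality of Lemma~\ref{lemma Tent}, by choosing the two factors $F$ and $G$ in that lemma appropriately in each case. In every one of the sixteen integrals, the integrand is a product of three factors: one involving $B=P_{t_1,t_2}\ast b$ (always appearing as $t_1t_2\,|\nabla^{(1)}\nabla^{(2)}B|$), one involving $G=P_{t_1,t_2}\ast g$ with some number of extra spatial derivatives, and one involving $F=P_{t_1,t_2}\ast f$ with some number of derivatives. The factor $t_1t_2\,|\nabla^{(1)}\nabla^{(2)}B|^2\,\frac{dx\,dt}{t_1t_2}$ is exactly the flag Carleson measure controlled by $\|b\|_{{\rm BMO}_{\mathcal F}}$ via Definition~\ref{def flag BMO}. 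So the strategy in each item is: group the $B$-factor together with one ``copy'' of the oscillation of $f$ (or of $g$, whichever is consistent with the stated exponents) to form the object playing the role of the Carleson-measure side in Lemma~\ref{lemma Tent}, and group the remaining derivative factors of $F$ and $G$ into the object whose area function must lie in $L^1$; then bound that $L^1$ norm by a product of $L^p$ and $L^{p'}$ norms of the indicated fractional Laplacians of $g$ and $f$ via Hölder's inequality and the Littlewood--Paley/$g$-function characterization of $L^p$.

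Concretely, I would first treat the ``generic'' case \eqref{BMO 1}. Write the left side as
\begin{align*}
\int\!\!\int \Big(t_1 t_2\,|\nabla^{(1)}\nabla^{(2)}B|\,|\nabla^{(1)}\nabla^{(2)}F|\Big)\,\Big(|\nabla_{x_1,x_2}\nabla_{x_2}\nabla^{(1)}\nabla^{(2)}G|\Big)\,dx_1dx_2dt_1dt_2,
\end{align*}
and apply Lemma~\ref{lemma Tent} with the first grouped factor in the role of ``$F$'' there and the second in the role of ``$G$''. The supremum term then becomes
$\sup_\Omega\big(|\Omega|^{-1}\int_{T(\Omega)} t_1t_2\,|\nabla^{(1)}\nabla^{(2)}B|^2\,|\nabla^{(1)}\nabla^{(2)}F|^2\,t_1t_2\,dx\,dt\big)^{1/2}$, which by the standard pointwise bound $|t_1t_2\nabla^{(1)}\nabla^{(2)}F(x,t)|\lesssim \|(-\Delta_{x_1,x_2})^{1/2}(-\Delta_{x_2})^{1/2}f\|_{L^\infty}$-type estimate is not quite right; instead one pulls the $F$-factor out in $L^\infty$ only after replacing it by $(-\Delta_{x_1,x_2})^{1/2}(-\Delta_{x_2})^{1/2}f$ evaluated through its own Poisson extension, i.e. one uses $t_1t_2\nabla^{(1)}\nabla^{(2)}F = $ the flag Poisson extension of (a vector whose size is comparable to) $(-\Delta_{x_1,x_2})^{1/2}(-\Delta_{x_2})^{1/2}f$, and a pointwise majorization of this extension by the flag Hardy--Littlewood maximal function of $(-\Delta_{x_1,x_2})^{1/2}(-\Delta_{x_2})^{1/2}f$. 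This converts the supremum factor into $\|b\|_{{\rm BMO}_{\mathcal F}}\,\|(-\Delta_{x_1,x_2})^{1/2}(-\Delta_{x_2})^{1/2}f\|_{L^{p'}}$ after a further Hölder step pairing it against the $L^1$-of-area factor, which in turn is controlled by $\|(-\Delta_{x_1,x_2})^{1/2}(-\Delta_{x_2})^{1/2}g\|_{L^p}$ by the flag Littlewood--Paley square function estimate. The remaining fifteen items follow the identical template: the only bookkeeping is counting how many of the derivatives hitting $F$ (resp. $G$) are full gradients $\nabla^{(i)}$ versus purely-spatial $\nabla_{x_1,x_2}$ or $\nabla_{x_2}$, since each spatial derivative $\partial_{x}$ on the Poisson extension corresponds to one power of $(-\Delta)^{1/2}$ in the appropriate variable, while each $t_i\partial_{t_i}$ is a harmless Littlewood--Paley factor; matching these against the fractional Laplacians in the stated right-hand sides is a routine exponent count, and the $L^\infty$ extraction of the least-differentiated factor is always the one that carries no $(-\Delta)^{1/2}$ in the case of $|F|$ in items \eqref{BMO 4}, \eqref{BMO 8}, \eqref{BMO 12}, \eqref{BMO 16}.

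The main obstacle I anticipate is the interchange between ``pull out one factor in $L^\infty$'' and ``keep the Carleson structure intact''. Specifically, in Lemma~\ref{lemma Tent} the Carleson side must be a \emph{single} function squared; when we want the product $|\nabla^{(1)}\nabla^{(2)}B|\cdot(\text{some derivative of }F)$ to be that function, we need the $F$-derivative factor to be essentially bounded with the right norm, and this requires the auxiliary pointwise estimate that $t_1t_2|\nabla^{(1)}\nabla^{(2)}F(x,t)|$ (and its lower-order analogues) is dominated by $M_{flag}$ applied to the corresponding fractional Laplacian of $f$, uniformly in $(x,t)\in T(\Omega)$ — here one must be careful that the flag Poisson semigroup genuinely reproduces $(-\Delta_{x_1,x_2})^{1/2}(-\Delta_{x_2})^{1/2}f$ under $t_1\partial_{t_1}t_2\partial_{t_2}$ applied to the two-parameter Poisson kernel, which is exactly the content of the boundary relations in \eqref{extension}, and that the resulting maximal function is bounded on $L^{p'}$. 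Once this pointwise/maximal ingredient is in hand, each of the sixteen inequalities is a mechanical combination of Lemma~\ref{lemma Tent}, Hölder, and the $L^p$-boundedness of the flag square function and flag maximal function; I would state the pointwise lemma once and then dispatch the sixteen cases together, noting only the differing derivative counts.
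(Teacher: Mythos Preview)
Your ingredients are correct --- Lemma~\ref{lemma Tent} (via Corollary~\ref{cor Tent}), the pointwise majorization of Poisson-extension derivatives by iterated maximal functions, H\"older, and the $L^p$ bounds for the flag square function and flag Riesz transforms --- but the grouping you propose for the application of Lemma~\ref{lemma Tent} does not work. You put $|\nabla^{(1)}\nabla^{(2)}B|\cdot|\nabla^{(1)}\nabla^{(2)}F|$ together on the Carleson side; as you yourself observe, this forces the $F$-factor to be estimated in $L^\infty$ uniformly over $T(\Omega)$, and no subsequent ``H\"older step pairing it against the $L^1$-of-area factor'' can recover an $L^{p'}$ norm of $(-\Delta_{x_1,x_2})^{1/2}(-\Delta_{x_2})^{1/2}f$ from a supremum.

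The paper's resolution is to keep $|\nabla^{(1)}\nabla^{(2)}B|$ \emph{alone} on the Carleson side of Corollary~\ref{cor Tent}, so that side yields exactly $\|b\|_{{\rm BMO}_{\mathcal F}}$, and to put the full product of the $G$- and $F$-derivative factors (times $t_1t_2$) on the area-function side. The maximal-function pointwise lemma you describe is then invoked \emph{inside} $S_{\mathcal F,L}$: for fixed $(x_1,x_2)$ the inner integral defining $S_{\mathcal F,L}$ runs only over points $(y_1,y_2,t_1,t_2)$ with $\chi_{t_1,t_2}(x_1-y_1,x_2-y_2)\neq0$, and on that flag cone $|\nabla^{(1)}\nabla^{(2)}F|$ is dominated by $M_1\big(M_2((-\Delta_{x_1,x_2})^{1/2}(-\Delta_{x_2})^{1/2}f)\big)(x_1,x_2)$ (plus the spatial-gradient variant). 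This constant-in-$(y,t)$ bound pulls outside the inner integral, leaving $S_{\mathcal F,L}\big(t_1t_2\nabla_{x_1,x_2}\nabla_{x_2}\nabla^{(1)}\nabla^{(2)}G\big)(x_1,x_2)$ times a maximal function of the $f$-derivative. H\"older on $\mathbb R^{n+m}$ now separates these: the $G$-piece is $S_{\mathcal F}$ applied to $\nabla_{x_1,x_2}\nabla_{x_2}(-\Delta_{x_1,x_2})^{-1/2}(-\Delta_{x_2})^{-1/2}\big((-\Delta_{x_1,x_2})^{1/2}(-\Delta_{x_2})^{1/2}g\big)$, hence controlled in $L^p$ by the flag square function and flag Riesz transform bounds, while the $f$-piece is bounded in $L^{p'}$ by the maximal inequality. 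Once you reorder the argument this way, your plan for the remaining fifteen items (bookkeeping of derivative counts) is exactly right.
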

\begin{proof}
To begin with, we first point out that for $f\in C^\infty_{0}(\R^{n+m})$, $F(x_1,x_2,t_1,t_2)=P_{t_1,t_2}*f(x_1,x_2)$
\begin{align*} 
 &\sup_{(y_1,y_2,t_1,t_2):\  \chi_{t_1,t_2}(x_1-y_1,x_2-y_2)\not=0} |F(y_1,y_2,t_1,t_2)|  \\
 &\leq \sup_{(y_1,y_2,t_1,t_2):\  |x_1-y_1|<t_1+t_2, |x_2-y_2|<t_2} |P_{t_1,t_2}*f(y_1,y_2)|\\
 &\leq M_1(M_2(f(\cdot_1, \cdot))(\cdot_2))(x_1,x_2),
\end{align*}
where $M_1$ and $M_2$ are the Hardy-Littlewood maximal functions on $\R^{n+m}$ and $\R^m$, respectively.

Next, based on the estimate above and from the property of the Poisson semigroup, we have
\begin{align*} 
 &\sup_{(y_1,y_2,t_1,t_2):\  \chi_{t_1,t_2}(x_1-y_1,x_2-y_2)\not=0} |\partial_{t_1}\partial_{t_2}F(y_1,y_2,t_1,t_2)|  \\
 &\leq \sup_{(y_1,y_2,t_1,t_2):\  |x_1-y_1|<t_1+t_2, |x_2-y_2|<t_2} \Big|P_{t_1,t_2}*\Big(  (-\Delta_{(1)})^{1\over2}(-\Delta_{(2)})^{1\over2} f\Big)(y_1,y_2)\Big|\\
 &\leq M_1\Bigg(M_2\bigg(\Big(  (-\Delta_{x_1,x_2})^{1\over2}(-\Delta_{x_2})^{1\over2} f\Big)(\cdot_1, \cdot)\bigg)(\cdot_2)\Bigg)(x_1,x_2).
\end{align*}
Also, we have 
\begin{align*} 
 &\sup_{(y_1,y_2,t_1,t_2):\  \chi_{t_1,t_2}(x_1-y_1,x_2-y_2)\not=0} |\nabla_{y_1,y_2}\nabla_{y_2}F(y_1,y_2,t_1,t_2)|  \\
 &\leq \sup_{(y_1,y_2,t_1,t_2):\  |x_1-y_1|<t_1+t_2, |x_2-y_2|<t_2} \Big|P_{t_1,t_2}* \Big(\nabla_{\cdot_1,\cdot_2}\nabla_{\cdot_2}f\Big)(y_1,y_2)\Big|\\
 &\leq M_1\Bigg(M_2\bigg( \Big(\nabla_{\cdot_1,\cdot_2}\nabla_{\cdot_2}f\Big)(\cdot_1, \cdot)\bigg)(\cdot_2)\Bigg)(x_1,x_2).
\end{align*}
Then, we first consider \eqref{BMO 1}.
Based on the estimates above and Corollary \ref{cor Tent},  we have
\begin{align*}
& \int_{\mathbb R^{n+1}_+\times\mathbb R^{m+1}_+} t_1 t_2  | \nabla^{(1)} \nabla^{(2)} B(x_1,x_2,t_1,t_2) |\ |\nabla_{x_1,x_2} \nabla_{x_2}   \nabla^{(1)} \nabla^{(2)} G(x_1,x_2,t_1,t_2) |\\
&\quad\times | \nabla^{(1)} \nabla^{(2)} F(x_1,x_2,t_1,t_2) |  dx_1dx_2dt_1dt_2 \nonumber\\
&\leq C \|b\|_{{\rm BMO}_{\mathcal{F}}(\mathbb{R}^{n}\times \mathbb{R}^{m})}\int_{\R^n\times\R^m} S_{\mathcal{F},L}\big( t_1t_2 \nabla_{x_1,x_2} \nabla_{x_2}   \nabla^{(1)} \nabla^{(2)} G\big) (x_1,x_2)\\
&\quad\quad\quad\times\left(M_1\Bigg(M_2\bigg(\Big(  (-\Delta_{x_1,x_2})^{1\over2}(-\Delta_{x_2})^{1\over2} f\Big)(\cdot_1, \cdot)\bigg)(\cdot_2)\Bigg)(x_1,x_2)\right. \\
&\quad\quad\quad \quad\quad\quad\quad\quad\quad\left.+  M_1\Bigg(M_2\bigg( \Big(\nabla_{\cdot_1,\cdot_2}\nabla_{\cdot_2}f\Big)(\cdot_1, \cdot)\bigg)(\cdot_2)\Bigg)(x_1,x_2) \right)dx_1dx_2   \\
&\leq C \|b\|_{{\rm BMO}_{\mathcal{F}}(\mathbb{R}^{n}\times \mathbb{R}^{m})}\\
&\quad\times\int_{\R^n\times\R^m} S_{\mathcal{F}}\big(  \nabla_{x_1,x_2} \nabla_{x_2}  (-\Delta_{x_1,x_2})^{-{1\over2}}(-\Delta_{x_2})^{-{1\over2}}  (-\Delta_{x_1,x_2})^{1\over2}(-\Delta_{x_2})^{1\over2}G\big) (x_1,x_2)\\
&\quad\quad\times\left(M_1\Bigg(M_2\bigg(\Big(  (-\Delta_{x_1,x_2})^{1\over2}(-\Delta_{x_2})^{1\over2} f\Big)(\cdot_1, \cdot)\bigg)(\cdot_2)\Bigg)(x_1,x_2)\right. \\
&\quad\quad \quad\left.+  M_1\Bigg(M_2\bigg( \Big(\nabla_{\cdot_1,\cdot_2}\nabla_{\cdot_2} (-\Delta_{\cdot_1,\cdot_2})^{-{1\over2}}(-\Delta_{\cdot_2})^{-{1\over2}}  (-\Delta_{\cdot_1,\cdot_2})^{1\over2}(-\Delta_{\cdot_2})^{1\over2}f\Big)(\cdot_1, \cdot)\bigg)(\cdot_2)\Bigg)(x_1,x_2) \right)\\
&\hskip14cmdx_1dx_2   \\
&\leq C \|b\|_{{\rm BMO}_{\mathcal{F}}(\mathbb{R}^{n}\times \mathbb{R}^{m})}\| (-\Delta_{x_1,x_2})^{1\over2}(-\Delta_{x_2})^{1\over2} g \|_{L^p(\mathbb{R}^{n}\times \mathbb{R}^{m})}\| (-\Delta_{x_1,x_2})^{1\over2}(-\Delta_{x_2})^{1\over2} f \|_{L^{p'}(\mathbb{R}^{n}\times \mathbb{R}^{m})},\nonumber
\end{align*} 
where in the second inequality the area function $S_\mathcal{F}$ is defined as in Definition \ref{S-function-Su}, and the last inequality follows from H\"older's inequality and boundedness of the maximal functions as well as the boundedness of the flag Riesz transforms. Hence we see that \eqref{BMO 1} holds.

By using similar estimate as above, we can obtain the estimates in \eqref{BMO 2}--\eqref{BMO 16}. We omit the details here since they are straightforward.
\end{proof}

\subsection{Upper bound for iterated commutators}

\begin{thm}
For every  $b\in {\rm BMO}_{\mathcal{F}}(\R^n\times\R^m)$, $g\in C_c^\infty(\mathbb R^n\times \mathbb R^m)$ and for any $i=1,2,\ldots,m+n$, $j=1,\ldots,n$, there exits a positive constant $C$ depending only on $p,n$ and $m$ such that
\begin{align}
\big\|\big[\big[b,R^{(1)}_i\big],R^{(2)}_j\big]_2(g)\big\|_{L^p(\mathbb R^n\times \mathbb R^m)} \leq C\|b\|_{{\rm BMO}_{\mathcal{F}}(\mathbb R^n\times \mathbb R^m)} \|g\|_{L^p(\mathbb R^n\times \mathbb R^m)}.
\end{align}
\end{thm}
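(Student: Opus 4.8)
The plan is to argue by $L^p$--$L^{p'}$ duality and to reduce the commutator bilinear form to a finite sum of the flag ``paraproduct'' integrals controlled by Proposition \ref{prop BMO estimate}. Fix $g\in C_c^\infty(\mathbb R^n\times\mathbb R^m)$; it suffices to bound $\big|\langle [[b,R_i^{(1)}],R_j^{(2)}]_2(g),h\rangle\big|$ by $C\|b\|_{{\rm BMO}_{\mathcal{F}}(\mathbb R^n\times\mathbb R^m)}\|g\|_{L^p}\|h\|_{L^{p'}}$ for every $h\in C_c^\infty(\mathbb R^n\times\mathbb R^m)$. Because the Riesz kernels are odd, $R_i^{(1)}$ is skew-adjoint on $\mathbb R^{n+m}$ and $R_j^{(2)}\ast_2$ is skew-adjoint on $\mathbb R^{n+m}$ in the second variable; expanding the four terms in Definition \ref{def-of-flag-commutator} and moving every Riesz transform off the $b$-factor gives
\begin{align*}
\langle [[b,R_i^{(1)}],R_j^{(2)}]_2(g),h\rangle=\int_{\mathbb R^{n+m}}b(x)\,\Theta_{i,j}(g,h)(x)\,dx,
\end{align*}
where $\Theta_{i,j}(g,h)=(R_i^{(1)}R_j^{(2)}g)\,h+(R_j^{(2)}g)(R_i^{(1)}h)+(R_i^{(1)}g)(R_j^{(2)}h)+g\,(R_i^{(1)}R_j^{(2)}h)$, a quantity symmetric in $g$ and $h$ --- the analytic shadow of Proposition \ref{prop equi com}.

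Next I would realise the pairing $\int b\,\Theta_{i,j}(g,h)$ on the flag cone. Write $B=P_{t_1,t_2}\ast b$, and for each factor $u\in\{R_i^{(1)}R_j^{(2)}g,\,R_j^{(2)}g,\,R_i^{(1)}g,\,g\}$ and $v\in\{h,\,R_i^{(1)}h,\,R_j^{(2)}h,\,R_i^{(1)}R_j^{(2)}h\}$ appearing in $\Theta_{i,j}$ let $U,V$ be their flag harmonic extensions \eqref{poisson integral}; each of $B,U,V$ is harmonic in $(x_1,x_2,t_1)$ and in $(x_2,t_2)$. By the polarized flag Littlewood--Paley identity (a consequence of Plancherel's theorem) one has, for $\psi\in L^2(\mathbb R^{n+m})$,
\begin{align*}
\int_{\mathbb R^{n+m}}b\,\psi\,dx=c\int_{\mathbb R^{n+1}_+\times\mathbb R^{m+1}_+}\big(t_1\partial_{t_1}t_2\partial_{t_2}B\big)\,\big(t_1\partial_{t_1}t_2\partial_{t_2}(P_{t_1,t_2}\ast\psi)\big)\,\frac{dx_1dx_2dt_1dt_2}{t_1t_2}.
\end{align*}
Taking $\psi=\Theta_{i,j}(g,h)$ and expanding each $t_1\partial_{t_1}t_2\partial_{t_2}\big(P_{t_1,t_2}\ast(uv)\big)$ by a flag paraproduct splitting --- by the Leibniz rule and integration by parts in $t_1,t_2$ and the spatial variables, writing it, up to controllable remainders, as a sum of products of derivatives of $U$ and $V$ in which the factor not catching a given $t_i$-derivative is an average dominated by a Hardy--Littlewood maximal function (exactly as in the proof of Proposition \ref{prop BMO estimate}) --- produces a finite sum of integrals of the form $\int t_1t_2\,|\nabla^{(1)}\nabla^{(2)}B|\;|(\cdots)G_\ast|\;|(\cdots)F_\ast|\,dx_1dx_2dt_1dt_2$, where the $(\cdots)$ range over the sixteen derivative patterns of Proposition \ref{prop BMO estimate} and $G_\ast,F_\ast$ are flag Poisson extensions of Riesz transforms of $g$ and $h$; here the Cauchy--Riemann relations linking the Poisson extension of a function to that of its Riesz transform are used to cast the $U,V$-derivatives into exactly those patterns. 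All the integrations by parts are legitimate because $g,h\in C_c^\infty$ and the pointwise Poisson-gradient bounds of Section \ref{s2} supply the necessary decay.

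Applying Proposition \ref{prop BMO estimate} termwise bounds each of these finitely many integrals by $C\|b\|_{{\rm BMO}_{\mathcal{F}}}$ times a fractional-Laplacian norm of a Riesz transform of $g$ in $L^p$ and the same for $h$ in $L^{p'}$. Every such factor has the form $(-\Delta_{x_1,x_2})^{1/2}(-\Delta_{x_2})^{1/2}$ --- or a sub-power of it --- composed with $R_i^{(1)}$, $R_j^{(2)}$ or $R_i^{(1)}R_j^{(2)}$; since that composition is a bounded flag Fourier multiplier on $L^p$ and on $L^{p'}$, the factor collapses to $\|g\|_{L^p}$ (respectively $\|h\|_{L^{p'}}$). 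Summing the finitely many terms and invoking duality completes the proof, and the general case of Theorem \ref{thm iterated com} then follows by density.

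I expect the bookkeeping of the Leibniz/paraproduct expansion --- together with the verification that \emph{every} surviving term carries the full mixed derivative $\nabla^{(1)}\nabla^{(2)}B$ on the $b$-factor --- to be the main obstacle. Since $B=P_{t_1,t_2}\ast b$ is unbounded, no term in which $B$ fails to be differentiated in both parameters can be estimated, so one must reconstruct $\Theta_{i,j}(g,h)$ and pair it against $b$ through the \emph{symmetric} Littlewood--Paley identity (rather than reconstructing $b\,\Theta_{i,j}(g,h)$ directly, which would reintroduce undifferentiated $B$-terms), and the Cauchy--Riemann identities must be deployed carefully to bring the remaining $U,V$-derivatives into precisely the shapes ($\nabla_{x_1,x_2}\nabla_{x_2}\nabla^{(1)}\nabla^{(2)}G$, $\nabla^{(1)}\nabla^{(2)}F$, and so on) for which Proposition \ref{prop BMO estimate} is stated. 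A secondary difficulty, handled by working throughout with $C_c^\infty$ data and the Poisson-gradient estimates of Section \ref{s2}, is the rigorous justification of the reconstruction identity and of all the multi-parameter integrations by parts.
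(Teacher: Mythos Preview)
Your overall strategy --- duality, rewriting the bilinear form as $\int b\,\Theta_{i,j}(g,h)$, lifting to the flag cone, and reducing to the templates of Proposition~\ref{prop BMO estimate} --- is exactly the paper's. The divergence is in \emph{how} you lift to the cone, and there is a real gap there.

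You invoke a polarized Littlewood--Paley identity
\[
\int b\,\psi = c\int (t_1\partial_{t_1}t_2\partial_{t_2}B)\,\big(t_1\partial_{t_1}t_2\partial_{t_2}(P_{t_1,t_2}\ast\psi)\big)\,\frac{dx\,dt}{t_1t_2},
\]
which does force the full mixed derivative onto $B$ from the outset. But then $\psi=\Theta_{i,j}(g,h)$ is a \emph{product}, and $P_{t_1,t_2}\ast(uv)$ is \emph{not} $(P_{t_1,t_2}\ast u)(P_{t_1,t_2}\ast v)$: there is no Leibniz rule available on $t_1\partial_{t_1}t_2\partial_{t_2}\big(P_{t_1,t_2}\ast(uv)\big)$. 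Your ``flag paraproduct splitting'' of this quantity into products of derivatives of $U$ and $V$ is the heart of the matter and is not developed anywhere in the paper; carrying it out rigorously would amount to building a flag Bony decomposition, which is substantial additional work and not what Proposition~\ref{prop BMO estimate} or its proof provides.

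The paper sidesteps this entirely. It uses instead the second-order formula \eqref{extension integration},
\[
\int_{\mathbb R^{n+m}}\phi\,dx = \int_{\mathbb R^{n+1}_+\times\mathbb R^{m+1}_+} t_1\partial_{t_1}^2\, t_2\partial_{t_2}^2\, \Phi\,dx\,dt,
\]
which holds for \emph{any} smooth extension $\Phi$ of $\phi$ with the right decay, not only the Poisson one. Taking $\phi$ to be the boundary integrand $f\cdot b\cdot R_i^{(1)}R_j^{(2)}g+\cdots$ and $\Phi$ to be the \emph{product of the individual Poisson extensions} $F\cdot B\cdot\widetilde{R_i^{(1)}R_j^{(2)}g}+\cdots$, the Leibniz rule now applies literally, yielding the nine groups $\mathcal C_1,\dots,\mathcal C_9$. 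The price is precisely the obstacle you flag: several of these ($\mathcal C_5$--$\mathcal C_9$) do not initially carry both derivatives on $B$. The paper restores them not via a reproducing formula but via harmonicity --- writing $\partial_{t_1}^2 B=-\Delta_{x_1,x_2}B$, $\partial_{t_2}^2 B=-\Delta_{x_2}B$ and integrating by parts in the spatial variables to move derivatives back onto $B$ --- together with the Cauchy--Riemann identities you mention (e.g.\ $\partial_{t_1}\widetilde{R_i^{(1)}\ast f}=-c\,\partial_{(x_1,x_2),i}F$), which convert $t$-derivatives of Riesz-transformed factors into spatial derivatives of the untransformed ones and produce the cancellations among the four commutator terms.

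In short: replace your polarized identity by \eqref{extension integration} and take the product of extensions rather than the extension of the product; then Leibniz is honest, and the remaining work is the redistribution of derivatives via harmonicity and spatial integration by parts as carried out in $\mathcal C_1$--$\mathcal C_9$.
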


\begin{proof}
Recall that
\begin{align*}
\big[\big[b,R^{(1)}_i\big],R^{(2)}_j\big]_2(g)(x_1,x_2) 
&= b(x_1,x_2) R^{(1)}_i \ast R^{(2)}_j\ast_2 g(x_1,x_2) - R^{(1)}_i \ast (b\cdot R^{(2)}_j\ast_2 g) (x_1,x_2)\\
&\quad - R^{(2)}_j \ast_2 \big( b\cdot R^{(1)}_i \ast g\big) (x_1,x_2) + R^{(2)}_j\ast_2 R^{(1)}_i\ast (b\cdot g) (x_1,x_2).
\end{align*}
Hence, for every $f\in C_c^\infty(\mathbb R^n\times \mathbb R^m)$, we have
\begin{align*}
\big\langle f, \big[\big[b,R^{(1)}_i\big],R^{(2)}_j\big]_2(g) \big\rangle
&=\big\langle f \cdot b, R^{(1)}_i \ast R^{(2)}_j\ast_2 g   \big\rangle+  \big\langle R^{(1)}_i \ast f, b\cdot R^{(2)}_j\ast_2 g  \big\rangle\\
&\quad + \big\langle R^{(2)}_j \ast_2 f,   b\cdot R^{(1)}_i \ast g  \big\rangle+  \big\langle R^{(2)}_j\ast_2 R^{(1)}_i\ast f, b\cdot g  \big\rangle.
\end{align*}

Denote by $B, F, G$ the flag harmonic extension of the functions $b,f,g$, respectively, as defined in \eqref{poisson integral}. And for each fixed $i,j$, denote by
$ \widetilde{R^{(1)}_i \ast f} $,  $\widetilde{R^{(2)}_j\ast_2 f} $ and $ \widetilde{R^{(1)}_i \ast R^{(2)}_j\ast_2 f}$  the flag harmonic extension of 
$R^{(1)}_i \ast f$,  $R^{(2)}_j\ast_2 f$ and $R^{(1)}_i \ast R^{(2)}_j\ast_2 f$.

Then we write
\begin{align}\label{inner product}
&\big\langle f, \big[\big[b,R^{(1)}_i\big],R^{(2)}_j\big]_2(g) \big\rangle\\
&=  \int_{\mathbb R^{n+1}_+\times\mathbb R^{m+1}_+} t_1 \partial_{t_1}^2t_2 \partial_{t_2}^2 \bigg(
  F \cdot B \cdot \widetilde{R^{(1)}_i \ast R^{(2)}_j\ast_2 g}   +  \widetilde{ R^{(1)}_i \ast f}\, \cdot B\cdot \widetilde{R^{(2)}_j\ast_2 g} \nonumber \\
&\quad +  \widetilde{R^{(2)}_j \ast_2 f}\, \cdot  B\cdot \widetilde{R^{(1)}_i \ast g} + \widetilde{ R^{(2)}_j\ast_2 R^{(1)}_i\ast f}\, \cdot B\cdot G\bigg)   dx_1dx_2dt_1dt_2. \nonumber
\end{align}
We now claim that the right-hand side of \eqref{inner product}
is bounded by
\begin{align}\label{inner product bound}
C\|b\|_{{\rm BMO}_{\mathcal{F}}(\mathbb R^n\times \mathbb R^m)} \|g\|_{L^p(\mathbb R^n\times \mathbb R^m)}
\|f\|_{L^{p'}(\mathbb R^n\times \mathbb R^m)}.
\end{align}

To see this, we compute the derivatives $t_1 \partial_{t_1}^2t_2 \partial_{t_2}^2$
for the integrand in the right-hand side of \eqref{inner product}. Then we have the following terms:
\begin{align}\label{C1}
\mathcal {C}_1=& \int_{\mathbb R^{n+1}_+\times\mathbb R^{m+1}_+}\bigg(
  t_1 \partial_{t_1}^2t_2 \partial_{t_2}^2 B \cdot F  \cdot \widetilde{R^{(1)}_i \ast R^{(2)}_j\ast_2 g}   + t_1 \partial_{t_1}^2t_2 \partial_{t_2}^2 B\cdot \widetilde{ R^{(1)}_i \ast f}\, \cdot \widetilde{R^{(2)}_j\ast_2 g} \nonumber \\
&\ + t_1 \partial_{t_1}^2t_2 \partial_{t_2}^2 B\cdot  \widetilde{R^{(2)}_j \ast_2 f}\, \cdot \widetilde{R^{(1)}_i \ast g} +t_1 \partial_{t_1}^2t_2 \partial_{t_2}^2 B\cdot  \widetilde{ R^{(2)}_j\ast_2 R^{(1)}_i\ast f}\, \cdot G\bigg) dx_1dx_2dt_1dt_2;
\end{align}
\begin{align}\label{C2}
\mathcal {C}_2=&\int_{\mathbb R^{n+1}_+\times\mathbb R^{m+1}_+}
  t_1 \partial_{t_1}^2t_2 \partial_{t_2} B \cdot  \partial_{t_2}\Big( F  \cdot \widetilde{R^{(1)}_i \ast R^{(2)}_j\ast_2 g} \Big)  + t_1 \partial_{t_1}^2t_2 \partial_{t_2} B\cdot  \partial_{t_2}\Big( \widetilde{ R^{(1)}_i \ast f}\, \cdot \widetilde{R^{(2)}_j\ast_2 g}\Big) \nonumber \\
&\ + t_1 \partial_{t_1}^2t_2 \partial_{t_2} B\cdot   \partial_{t_2}\Big( \widetilde{R^{(2)}_j \ast_2 f}\, \cdot \widetilde{R^{(1)}_i \ast g}\Big)\nonumber\\
&\quad+t_1 \partial_{t_1}^2t_2 \partial_{t_2} B\cdot  \partial_{t_2}\Big( \widetilde{ R^{(2)}_j\ast_2 R^{(1)}_i\ast f}\, \cdot G\Big) dx_1dx_2dt_1dt_2;
\end{align}
\begin{align}\label{C3}
\mathcal {C}_3=&\int_{\mathbb R^{n+1}_+\times\mathbb R^{m+1}_+}
  t_1 \partial_{t_1}t_2 \partial_{t_2}^2 B \cdot  \partial_{t_1}\Big( F  \cdot \widetilde{R^{(1)}_i \ast R^{(2)}_j\ast_2 g} \Big)  + t_1 \partial_{t_1}t_2 \partial_{t_2}^2 B\cdot  \partial_{t_1}\Big( \widetilde{ R^{(1)}_i \ast f}\, \cdot \widetilde{R^{(2)}_j\ast_2 g}\Big) \nonumber \\
&\ + t_1 \partial_{t_1}t_2 \partial_{t_2}^2 B\cdot   \partial_{t_1}\Big( \widetilde{R^{(2)}_j \ast_2 f}\, \cdot \widetilde{R^{(1)}_i \ast g}\Big) \nonumber\\
&\quad+t_1 \partial_{t_1}t_2 \partial_{t_2}^2 B\cdot  \partial_{t_1}\Big( \widetilde{ R^{(2)}_j\ast_2 R^{(1)}_i\ast f}\, \cdot G\Big) dx_1dx_2dt_1dt_2;
\end{align}
\begin{align}\label{C4}
\mathcal {C}_4=&\int_{\mathbb R^{n+1}_+\times\mathbb R^{m+1}_+}
  t_1 \partial_{t_1}t_2 \partial_{t_2} B \cdot  \partial_{t_1}\partial_{t_2}\Big( F  \cdot \widetilde{R^{(1)}_i \ast R^{(2)}_j\ast_2 g} \Big)  \nonumber\\
&\quad+ t_1 \partial_{t_1}t_2 \partial_{t_2} B\cdot  \partial_{t_1}\partial_{t_2}\Big( \widetilde{ R^{(1)}_i \ast f}\, \cdot \widetilde{R^{(2)}_j\ast_2 g}\Big)  + t_1 \partial_{t_1}t_2 \partial_{t_2} B\cdot   \partial_{t_1}\partial_{t_2}\Big( \widetilde{R^{(2)}_j \ast_2 f}\, \cdot \widetilde{R^{(1)}_i \ast g}\Big) \nonumber\\
&\quad+t_1 \partial_{t_1}t_2 \partial_{t_2} B\cdot  \partial_{t_1}\partial_{t_2}\Big( \widetilde{ R^{(2)}_j\ast_2 R^{(1)}_i\ast f}\, \cdot G\Big) dx_1dx_2dt_1dt_2;
\end{align}
\begin{align}\label{C5}
\mathcal {C}_5=&\int_{\mathbb R^{n+1}_+\times\mathbb R^{m+1}_+}
  t_1 \partial_{t_1}t_2 B \cdot  \partial_{t_1}\partial_{t_2}^2\Big( F  \cdot \widetilde{R^{(1)}_i \ast R^{(2)}_j\ast_2 g} \Big)  + t_1 \partial_{t_1}t_2  B\cdot  \partial_{t_1}\partial_{t_2}^2\Big( \widetilde{ R^{(1)}_i \ast f}\, \cdot \widetilde{R^{(2)}_j\ast_2 g}\Big) \nonumber \\
&\ + t_1 \partial_{t_1}t_2  B\cdot   \partial_{t_1}\partial_{t_2}^2\Big( \widetilde{R^{(2)}_j \ast_2 f}\, \cdot \widetilde{R^{(1)}_i \ast g}\Big) \nonumber\\
&\quad+t_1 \partial_{t_1}t_2  B\cdot  \partial_{t_1}\partial_{t_2}^2\Big( \widetilde{ R^{(2)}_j\ast_2 R^{(1)}_i\ast f}\, \cdot G\Big) dx_1dx_2dt_1dt_2;
\end{align}
\begin{align}\label{C6}
\mathcal {C}_6=&\int_{\mathbb R^{n+1}_+\times\mathbb R^{m+1}_+}
  t_1 t_2 \partial_{t_2} B \cdot  \partial_{t_1}^2\partial_{t_2}\Big( F  \cdot \widetilde{R^{(1)}_i \ast R^{(2)}_j\ast_2 g} \Big)  + t_1t_2 \partial_{t_2} B\cdot  \partial_{t_1}^2\partial_{t_2}\Big( \widetilde{ R^{(1)}_i \ast f}\, \cdot \widetilde{R^{(2)}_j\ast_2 g}\Big) \nonumber \\
&\ + t_1 t_2 \partial_{t_2} B\cdot   \partial_{t_1}^2\partial_{t_2}\Big( \widetilde{R^{(2)}_j \ast_2 f}\, \cdot \widetilde{R^{(1)}_i \ast g}\Big) \nonumber\\
&\quad+t_1 t_2 \partial_{t_2} B\cdot  \partial_{t_1}^2\partial_{t_2}\Big( \widetilde{ R^{(2)}_j\ast_2 R^{(1)}_i\ast f}\, \cdot G\Big) dx_1dx_2dt_1dt_2;
\end{align}
\begin{align}\label{C7}
\mathcal {C}_7=&\int_{\mathbb R^{n+1}_+\times\mathbb R^{m+1}_+}
  t_1 t_2 \partial_{t_2}^2 B \cdot  \partial_{t_1}^2\Big( F  \cdot \widetilde{R^{(1)}_i \ast R^{(2)}_j\ast_2 g} \Big)  + t_1t_2 \partial_{t_2}^2 B\cdot  \partial_{t_1}^2\Big( \widetilde{ R^{(1)}_i \ast f}\, \cdot \widetilde{R^{(2)}_j\ast_2 g}\Big) \nonumber \\
&\ + t_1 t_2 \partial_{t_2}^2 B\cdot   \partial_{t_1}^2\Big( \widetilde{R^{(2)}_j \ast_2 f}\, \cdot \widetilde{R^{(1)}_i \ast g}\Big) +t_1 t_2 \partial_{t_2}^2 B\cdot  \partial_{t_1}^2\Big( \widetilde{ R^{(2)}_j\ast_2 R^{(1)}_i\ast f}\, \cdot G\Big) dx_1dx_2dt_1dt_2;
\end{align}
\begin{align}\label{C8}
\mathcal {C}_8=&\int_{\mathbb R^{n+1}_+\times\mathbb R^{m+1}_+}
  t_1 t_2 \partial_{t_1}^2 B \cdot  \partial_{t_2}^2\Big( F  \cdot \widetilde{R^{(1)}_i \ast R^{(2)}_j\ast_2 g} \Big)  + t_1t_2 \partial_{t_1}^2 B\cdot  \partial_{t_2}^2\Big( \widetilde{ R^{(1)}_i \ast f}\, \cdot \widetilde{R^{(2)}_j\ast_2 g}\Big) \nonumber \\
&\ + t_1 t_2 \partial_{t_1}^2 B\cdot   \partial_{t_2}^2\Big( \widetilde{R^{(2)}_j \ast_2 f}\, \cdot \widetilde{R^{(1)}_i \ast g}\Big) +t_1 t_2 \partial_{t_1}^2 B\cdot  \partial_{t_2}^2\Big( \widetilde{ R^{(2)}_j\ast_2 R^{(1)}_i\ast f}\, \cdot G\Big) dx_1dx_2dt_1dt_2;
\end{align}
\begin{align}\label{C9}
\mathcal {C}_9=&\int_{\mathbb R^{n+1}_+\times\mathbb R^{m+1}_+}
  t_1 t_2 B \cdot  \partial_{t_1}^2 \partial_{t_2}^2\Big( F  \cdot \widetilde{R^{(1)}_i \ast R^{(2)}_j\ast_2 g} \Big)  + t_1t_2  B\cdot  \partial_{t_1}^2\partial_{t_2}^2\Big( \widetilde{ R^{(1)}_i \ast f}\, \cdot \widetilde{R^{(2)}_j\ast_2 g}\Big) \nonumber \\
&\ + t_1 t_2  B\cdot   \partial_{t_1}^2\partial_{t_2}^2\Big( \widetilde{R^{(2)}_j \ast_2 f}\, \cdot \widetilde{R^{(1)}_i \ast g}\Big) +t_1 t_2  B\cdot  \partial_{t_1}^2\partial_{t_2}^2\Big( \widetilde{ R^{(2)}_j\ast_2 R^{(1)}_i\ast f}\, \cdot G\Big)dx_1dx_2dt_1dt_2.
\end{align}

We first consider $\mathcal C_1$. Note that
$  \partial_{t_2}^2 B = -\Delta_{x_2} B = -\nabla_{x_2}\cdot \nabla_{x_2} B $ and that
$  \partial_{t_1}^2 B = -\Delta_{x_1,x_2} B = -\nabla_{x_1,x_2}\cdot \nabla_{x_1,x_2} B $.
So, integration by parts gives
\begin{align*}
 \mathcal {C}_1
&=\int_{\mathbb R^{n+1}_+\times\mathbb R^{m+1}_+}
  t_1 t_2 \nabla_{x_1,x_2} \nabla_{x_2} B \cdot \nabla_{x_1,x_2} \nabla_{x_2} \Big(F  \cdot \widetilde{R^{(1)}_i \ast R^{(2)}_j\ast_2 g} \Big) \\
  & \quad + t_1 t_2 \nabla_{x_1,x_2} \nabla_{x_2} B\cdot \nabla_{x_1,x_2} \nabla_{x_2} \Big(\widetilde{ R^{(1)}_i \ast f}\, \cdot \widetilde{R^{(2)}_j\ast_2 g}\Big) \nonumber \\
&\quad + t_1 t_2 \nabla_{x_1,x_2} \nabla_{x_2} B\cdot \nabla_{x_1,x_2} \nabla_{x_2}\Big( \widetilde{R^{(2)}_j \ast_2 f}\, \cdot \widetilde{R^{(1)}_i \ast g}\Big)\\
&\quad +t_1 t_2 \nabla_{x_1,x_2} \nabla_{x_2} B\cdot \nabla_{x_1,x_2} \nabla_{x_2}\Big( \widetilde{ R^{(2)}_j\ast_2 R^{(1)}_i\ast f}\, \cdot G\Big) \ dx_1dx_2dt_1dt_2\\
&=: \mathcal {C}_{1,1}+\mathcal {C}_{1,2}+\mathcal {C}_{1,3}+\mathcal {C}_{1,4}.
\end{align*}
For the first term, it is clear that 
\begin{align*}
 \mathcal {C}_{1,1}
&=\int_{\mathbb R^{n+1}_+\times\mathbb R^{m+1}_+}
  t_1 t_2 \nabla_{x_1,x_2} \nabla_{x_2} B \cdot \nabla_{x_1,x_2} \nabla_{x_2} F  \cdot \widetilde{R^{(1)}_i \ast R^{(2)}_j\ast_2 g}  
 \ dx_1dx_2dt_1dt_2\\
&=\int_{\mathbb R^{n+1}_+\times\mathbb R^{m+1}_+}
  t_1 t_2 \nabla_{x_1,x_2} \nabla_{x_2} B \cdot \nabla_{x_1,x_2}  F  \cdot \nabla_{x_2} \widetilde{R^{(1)}_i \ast R^{(2)}_j\ast_2 g}  
 \ dx_1dx_2dt_1dt_2\\
&=\int_{\mathbb R^{n+1}_+\times\mathbb R^{m+1}_+}
  t_1 t_2 \nabla_{x_1,x_2} \nabla_{x_2} B \cdot  \nabla_{x_2} F  \cdot  \nabla_{x_1,x_2}\widetilde{R^{(1)}_i \ast R^{(2)}_j\ast_2 g} 
 \ dx_1dx_2dt_1dt_2\\
&=\int_{\mathbb R^{n+1}_+\times\mathbb R^{m+1}_+}
  t_1 t_2 \nabla_{x_1,x_2} \nabla_{x_2} B \cdot  F  \cdot \nabla_{x_1,x_2} \nabla_{x_2} \widetilde{R^{(1)}_i \ast R^{(2)}_j\ast_2 g}  
 \ dx_1dx_2dt_1dt_2\\
&=: \mathcal {C}_{1,1,1}+\mathcal {C}_{1,1,2}+\mathcal {C}_{1,1,3}+\mathcal {C}_{1,1,4}.
\end{align*}
It is direct that $\mathcal {C}_{1,1,1}$ and $\mathcal {C}_{1,1,4}$ can be handled by using 
\eqref{BMO 2}, and $\mathcal {C}_{1,1,2}$ and $\mathcal {C}_{1,1,3}$ can be handled by using 
\eqref{BMO 3}, which gives that
$$ \mathcal {C}_{1,1} \leq C\|b\|_{{\rm BMO}_{\mathcal{F}}(\mathbb R^n\times \mathbb R^m)} \|g\|_{L^p(\mathbb R^n\times \mathbb R^m)}
\|f\|_{L^{p'}(\mathbb R^n\times \mathbb R^m)}.
 $$

%For $\mathcal {C}_{1,2}$, we have 
%\begin{align*}
% \mathcal {C}_{1,2}
%&=\int_{\mathbb R^{n+1}_+\times\mathbb R^{m+1}_+}
%  t_1 t_2 \nabla_{x_1,x_2} \nabla_{x_2} B \cdot \nabla_{x_1,x_2} \nabla_{x_2} \widetilde{ R^{(1)}_i \ast f}\, \cdot \widetilde{R^{(2)}_j\ast_2 g}  
% \ dx_1dx_2dt_1dt_2\\
%&=\int_{\mathbb R^{n+1}_+\times\mathbb R^{m+1}_+}
%  t_1 t_2 \nabla_{x_1,x_2} \nabla_{x_2} B \cdot \nabla_{x_1,x_2}  \widetilde{ R^{(1)}_i \ast f}  \cdot \nabla_{x_2}   \widetilde{R^{(2)}_j\ast_2 g} 
% \ dx_1dx_2dt_1dt_2\\
%&=\int_{\mathbb R^{n+1}_+\times\mathbb R^{m+1}_+}
%  t_1 t_2 \nabla_{x_1,x_2} \nabla_{x_2} B \cdot  \nabla_{x_2} \widetilde{ R^{(1)}_i \ast f}  \cdot  \nabla_{x_1,x_2}\widetilde{R^{(2)}_j\ast_2 g}
% \ dx_1dx_2dt_1dt_2\\
%&=\int_{\mathbb R^{n+1}_+\times\mathbb R^{m+1}_+}
%  t_1 t_2 \nabla_{x_1,x_2} \nabla_{x_2} B   \cdot  \widetilde{ R^{(1)}_i \ast f}\, \cdot  \nabla_{x_1,x_2} \nabla_{x_2}\widetilde{R^{(2)}_j\ast_2 g}  
% \ dx_1dx_2dt_1dt_2\\
%&=: \mathcal {C}_{1,2,1}+\mathcal {C}_{1,2,2}+\mathcal {C}_{1,2,3}+\mathcal {C}_{1,2,4}.
%\end{align*}

Symmetrically we obtain the estimate for $\mathcal {C}_{1,4}$, and using similar estimates we can handle $\mathcal {C}_{1,2}$ and 
 $\mathcal {C}_{1,3}$. All these three terms are have the same upper as $\mathcal {C}_{1,1}$ above.

Next, for $\mathcal C_2$, note that
$  \partial_{t_1}^2 B = -\Delta_{x_1,x_2} B = -\nabla_{x_1,x_2}\cdot \nabla_{x_1,x_2} B $.
Thus, similar to the term $\mathcal C_1$,
 by integration by parts, we have
\begin{align*}
 \mathcal {C}_2
&=-\int_{\mathbb R^{n+1}_+\times\mathbb R^{m+1}_+}
  t_1 t_2 \nabla_{x_1,x_2} \partial_{t_2} B \cdot \nabla_{x_1,x_2} \partial_{t_2}  \Big(F  \cdot \widetilde{R^{(1)}_i \ast R^{(2)}_j\ast_2 g} \Big) \\
  & \quad + t_1 t_2 \nabla_{x_1,x_2} \partial_{t_2}  B\cdot \nabla_{x_1,x_2} \partial_{t_2}  \Big(\widetilde{ R^{(1)}_i \ast f}\, \cdot \widetilde{R^{(2)}_j\ast_2 g}\Big) \nonumber \\
&\quad + t_1 t_2 \nabla_{x_1,x_2} \partial_{t_2}  B\cdot \nabla_{x_1,x_2} \partial_{t_2} \Big( \widetilde{R^{(2)}_j \ast_2 f}\, \cdot \widetilde{R^{(1)}_i \ast g}\Big)\\
&\quad +t_1 t_2 \nabla_{x_1,x_2} \partial_{t_2}  B\cdot \nabla_{x_1,x_2} \partial_{t_2} \Big( \widetilde{ R^{(2)}_j\ast_2 R^{(1)}_i\ast f}\, \cdot G\Big) \ dx_1dx_2dt_1dt_2\\
&=: \mathcal {C}_{2,1}+\mathcal {C}_{2,2}+\mathcal {C}_{2,3}+\mathcal {C}_{2,4}.
\end{align*}
Again, the upper bounds from the four terms above can be obtained  by applying  Proposition 
\ref{prop BMO estimate}, and they are all controlled by 
$$C\|b\|_{{\rm BMO}_{\mathcal{F}}(\mathbb R^n\times \mathbb R^m)} \|g\|_{L^p(\mathbb R^n\times \mathbb R^m)}
\|f\|_{L^{p'}(\mathbb R^n\times \mathbb R^m)}.$$
The term $\mathcal C_3$ can be handled symmetrically to $\mathcal C_2$ and we obtain the same upper bounds.

For the term $\mathcal C_4$, by noting that $|\partial_{t_1} \partial_{t_2} B(x_1,x_2,t_1,t_2)|$ is bounded by
$| \nabla^{(1)} \nabla^{(2)} B(x_1,x_2,t_1,t_2) |$, we obtain that 
$\mathcal C_4$ is bounded by 
 $$C\|b\|_{{\rm BMO}_{\mathcal{F}}(\mathbb R^n\times \mathbb R^m)} \|g\|_{L^p(\mathbb R^n\times \mathbb R^m)}
\|f\|_{L^{p'}(\mathbb R^n\times \mathbb R^m)},$$
where we apply again the upper bounds in Proposition \ref{prop BMO estimate}.

% by integration by parts, we have
%\begin{align*}
%&\int_{\mathbb R^{n+1}_+\times\mathbb R^{m+1}_+} t_1 \partial_{t_1}^2t_2 \partial_{t_2}^2\ \mathcal {C}_3\ dx_1dx_2dt_1dt_2\\
%&=-\int_{\mathbb R^{n+1}_+\times\mathbb R^{m+1}_+}
%  t_1 t_2 \nabla_{x_2} \partial_{t_1} B \cdot \nabla_{x_1,x_2} \partial_{t_1}  \Big(F  \cdot \widetilde{R^{(1)}_i \ast R^{(2)}_j\ast_2 g} \Big) \\
%  & \quad + t_1 t_2 \nabla_{x_2} \partial_{t_1}  B\cdot \nabla_{x_1,x_2} \partial_{t_1}  \Big(\widetilde{ R^{(1)}_i \ast f}\, \cdot \widetilde{R^{(2)}_j\ast_2 g}\Big) \nonumber \\
%&\quad + t_1 t_2 \nabla_{x_2} \partial_{t_1}  B\cdot \nabla_{x_1,x_2} \partial_{t_1} \Big( \widetilde{R^{(2)}_j \ast_2 f}\, \cdot \widetilde{R^{(1)}_i \ast g}\Big)\\
%&\quad +t_1 t_2 \nabla_{x_2} \partial_{t_1}  B\cdot \nabla_{x_1,x_2} \partial_{t_1} \Big( \widetilde{ R^{(2)}_j\ast_2 R^{(1)}_i\ast f}\, \cdot G\Big) \ dx_1dx_2dt_1dt_2
%\end{align*}
% $\mathcal C_3$ can be handled similarly to  $\mathcal C_2$.

We now turn to the  term $\mathcal C_9$. We first point out the following equalities:
\begin{align*}
&\partial_{t_1} \widetilde{R^{(1)}_i \ast R^{(2)}_j\ast_2 g}(x_1,x_2) = -c \partial_{(x_{1},x_2),i}\ \widetilde{R^{(2)}_j\ast_2 g}(x_1,x_2)\\
&\partial_{t_1}^2 \widetilde{R^{(1)}_i \ast R^{(2)}_j\ast_2 g}(x_1,x_2) = -c\partial_{t_1} \partial_{(x_{1},x_2),i}\ \widetilde{R^{(2)}_j\ast_2 g}(x_1,x_2)\\
&\partial_{t_2} \widetilde{R^{(1)}_i \ast R^{(2)}_j\ast_2 g}(x_1,x_2) = -c \partial_{x_{2,j}}\ \widetilde{R^{(1)}_i\ast g}(x_1,x_2)\\
&\partial_{t_2}^2 \widetilde{R^{(1)}_i \ast R^{(2)}_j\ast_2 g}(x_1,x_2) = -c\partial_{t_2}  \partial_{x_{2,j}}\ \widetilde{R^{(1)}_i\ast g}(x_1,x_2)\\
&\partial_{t_1} \widetilde{ R^{(1)}_i \ast f}=-c \partial_{(x_{1},x_2),i} \widetilde {f},\\
&\partial_{t_1}^2 \widetilde{ R^{(1)}_i \ast f}=-c \partial_{t_1}\partial_{(x_{1},x_2),i} \widetilde {f},\\
&\partial_{t_2} \widetilde{R^{(2)}_j\ast_2 g}=-c  \partial_{x_{2,j}} \widetilde{ g},\\
&\partial_{t_2}^2 \widetilde{R^{(2)}_j\ast_2 g}=-c \partial_{t_2}  \partial_{x_{2,j}} \widetilde{ g}.
\end{align*}
Then for the term $\mathcal C_9$, we get
\begin{align*}
&\partial_{t_1}^2 \partial_{t_2}^2\Big( F  \cdot \widetilde{R^{(1)}_i \ast R^{(2)}_j\ast_2 g} +\widetilde{ R^{(1)}_i \ast f}\, \cdot \widetilde{R^{(2)}_j\ast_2 g}+ \widetilde{R^{(2)}_j \ast_2 f}\, \cdot \widetilde{R^{(1)}_i \ast g} +   \widetilde{ R^{(2)}_j\ast_2 R^{(1)}_i\ast f}\, \cdot G\Big)\\
&= 4 \partial_{(x_{1},x_2),i}\partial_{t_1}\partial_{x_{2,j}}\partial_{t_2} (FG)\\
&\quad\quad  -2 \nabla_{x_1,x_2} \partial_{x_{2,j}}\partial_{t_2} \Big( \nabla_{x_1,x_2}\widetilde{ R^{(1)}_i \ast f} \cdot G \Big)
-2 \nabla_{x_1,x_2} \partial_{x_{2,j}}\partial_{t_2} \Big( F \cdot \nabla_{x_1,x_2}\widetilde{ R^{(1)}_i \ast g}  \Big)\\
&\quad\quad +2 \nabla_{x_1,x_2} \partial_{x_{2,j}}\partial_{t_2} \Big( \nabla_{x_1,x_2} F\cdot \widetilde{ R^{(1)}_i \ast g} \Big)+
2 \nabla_{x_1,x_2} \partial_{x_{2,j}}\partial_{t_2} \Big(  \widetilde{ R^{(1)}_i \ast f} \cdot\nabla_{x_1,x_2} G\Big)  \\
&\quad-2\partial_{(x_{1},x_2),i}\partial_{t_1}\nabla_{x_2} \Big(  \nabla_{x_2}\widetilde{ R^{(2)}_j\ast f}\cdot G  \Big)\\
&\quad\quad +\nabla_{x_1,x_2} \nabla_{x_2}\Big(   \nabla_{x_1,x_2} \nabla_{x_2} \widetilde{ R^{(2)}_j\ast_2 R^{(1)}_i\ast f} \cdot G   \Big)+\nabla_{x_1,x_2} \nabla_{x_2}\Big(   \nabla_{x_2} \widetilde{ R^{(2)}_j\ast_2 f} \cdot \nabla_{x_1,x_2}    \widetilde{ R^{(1)}_i \ast g}  \Big)\\
&\quad\quad -\nabla_{x_1,x_2} \nabla_{x_2}\Big(   \nabla_{x_1,x_2} \nabla_{x_2} \widetilde{ R^{(2)}_j\ast_2  f} \cdot   \widetilde{ R^{(1)}_i \ast g}     \Big)- \nabla_{x_1,x_2} \nabla_{x_2}\Big(   \nabla_{x_2} \widetilde{ R^{(2)}_j\ast_2 R^{(1)}_i\ast f} \cdot \nabla_{x_1,x_2}    G  \Big)\\
&\quad-2\partial_{(x_{1},x_2),i}\partial_{t_1}\nabla_{x_2} \Big(  F \cdot \nabla_{x_2}\widetilde{ R^{(2)}_j\ast g}  \Big)\\
&\quad\quad +\nabla_{x_1,x_2} \nabla_{x_2}\Big(   \nabla_{x_1,x_2}  \widetilde{  R^{(1)}_i\ast f} \cdot \nabla_{x_2} \widetilde{  R^{(2)}_j\ast_2 g}   \Big)+\nabla_{x_1,x_2} \nabla_{x_2}\Big(  F \cdot \nabla_{x_1,x_2}  \nabla_{x_2}   \widetilde{ R^{(1)}_i \ast R^{(2)}_j\ast_2 g}  \Big)\\
&\quad\quad -\nabla_{x_1,x_2} \nabla_{x_2}\Big(   \nabla_{x_1,x_2} \nabla_{x_2} \widetilde{ R^{(2)}_j\ast_2  f} \cdot   \widetilde{ R^{(1)}_i \ast g}     \Big)- \nabla_{x_1,x_2} \nabla_{x_2}\Big(   \nabla_{x_2} \widetilde{ R^{(2)}_j\ast_2 R^{(1)}_i\ast f} \cdot \nabla_{x_1,x_2}    G  \Big)\\
&\quad+2\partial_{(x_{1},x_2),i}\partial_{t_1}\nabla_{x_2} \Big(  \nabla_{x_2} F \cdot \widetilde{ R^{(2)}_j\ast g}  \Big)\\
&\quad\quad -\nabla_{x_1,x_2} \nabla_{x_2}\Big(   \nabla_{x_1,x_2} \nabla_{x_2} \widetilde{  R^{(1)}_i\ast f} \cdot  \widetilde{  R^{(2)}_j\ast_2 g}   \Big)-\nabla_{x_1,x_2} \nabla_{x_2}\Big( \nabla_{x_2}   F \cdot \nabla_{x_1,x_2}   \widetilde{ R^{(1)}_i \ast R^{(2)}_j\ast_2 g}  \Big)\\
&\quad\quad +\nabla_{x_1,x_2} \nabla_{x_2}\Big(   \nabla_{x_1,x_2} \nabla_{x_2} F \cdot   \widetilde{ R^{(1)}_i \ast R^{(2)}_j\ast_2 g}     \Big)+     \nabla_{x_1,x_2} \nabla_{x_2}\Big(   \nabla_{x_2} \widetilde{  R^{(1)}_i\ast f} \cdot \nabla_{x_1,x_2}   \widetilde{  R^{(2)}_j\ast_2 g}    \Big)\\
&\quad+2\partial_{(x_{1},x_2),i}\partial_{t_1}\nabla_{x_2} \Big( \widetilde{ R^{(2)}_j\ast f} \cdot \nabla_{x_2} G    \Big)\\
&\quad\quad -\nabla_{x_1,x_2} \nabla_{x_2}\Big(   \nabla_{x_1,x_2}  \widetilde{  R^{(1)}_i\ast R^{(2)}_j\ast_2 f} \cdot  \nabla_{x_2} G   \Big)-\nabla_{x_1,x_2} \nabla_{x_2}\Big(        \widetilde{  R^{(2)}_j\ast_2 f} \cdot \nabla_{x_1,x_2} \nabla_{x_2}  \widetilde{ R^{(1)}_i \ast  g}  \Big)\\
&\quad\quad +\nabla_{x_1,x_2} \nabla_{x_2}\Big(   \nabla_{x_1,x_2}  \widetilde{  R^{(2)}_j\ast_2 f}    \cdot   \nabla_{x_2} \widetilde{ R^{(1)}_i \ast  g}     \Big)+     \nabla_{x_1,x_2} \nabla_{x_2}\Big(    \widetilde{  R^{(1)}_i\ast R^{(2)}_j\ast_2 f} \cdot \nabla_{x_1,x_2} \nabla_{x_2}   G    \Big).
\end{align*}

Thus, we input the above 25 terms back into the right-hand side of $\mathcal C_9$ and obtain the terms as follows:
\begin{align*}
\mathcal {C}_9&=\int_{\mathbb R^{n+1}_+\times\mathbb R^{m+1}_+}
  t_1 t_2 B \cdot  \partial_{t_1}^2 \partial_{t_2}^2\Big( F  \cdot \widetilde{R^{(1)}_i \ast R^{(2)}_j\ast_2 g}  + \widetilde{ R^{(1)}_i \ast f}\, \cdot \widetilde{R^{(2)}_j\ast_2 g} \\
  &\quad+ \widetilde{R^{(2)}_j \ast_2 f}\, \cdot \widetilde{R^{(1)}_i \ast g} + \widetilde{ R^{(2)}_j\ast_2 R^{(1)}_i\ast f}\, \cdot G\Big)dx_1dx_2dt_1dt_2\\
  &=4\int_{\mathbb R^{n+1}_+\times\mathbb R^{m+1}_+}  t_1 t_2 \partial_{(x_{1},x_2),i} \partial_{x_{2,j}} B \cdot\partial_{t_1}\partial_{t_2} (FG) dx_1dx_2dt_1dt_2\\
  &\quad -2\int_{\mathbb R^{n+1}_+\times\mathbb R^{m+1}_+}  t_1 t_2 \nabla_{x_1,x_2} \partial_{x_{2,j}} B \cdot   \partial_{t_2} \Big( \nabla_{x_1,x_2}\widetilde{ R^{(1)}_i \ast f} \cdot G \Big)dx_1dx_2dt_1dt_2\\
  &\cdots\\
  &\cdots\\
  &\quad +\int_{\mathbb R^{n+1}_+\times\mathbb R^{m+1}_+}  t_1 t_2 \nabla_{x_1,x_2} \nabla_{x_2} B \cdot  \Big(    \widetilde{  R^{(1)}_i\ast R^{(2)}_j\ast_2 f} \cdot \nabla_{x_1,x_2} \nabla_{x_2}   G    \Big) dx_1dx_2dt_1dt_2\\
  &=\mathcal {C}_{9,1}+\mathcal {C}_{9,2}+\cdots+\mathcal {C}_{9,25},
\end{align*}
where we get all these terms from the equality $\partial_{t_1}^2 \partial_{t_2}^2\big( \cdots\big)$
by integration by parts and taking all the gradients or partial derivatives with respect to $x_1,x_2$ to the function $B$. 
By applying  Proposition 
\ref{prop BMO estimate} to all these terms, we obtain that  they are all controlled by 
$$C\|b\|_{{\rm BMO}_{\mathcal{F}}(\mathbb R^n\times \mathbb R^m)} \|g\|_{L^p(\mathbb R^n\times \mathbb R^m)}
\|f\|_{L^{p'}(\mathbb R^n\times \mathbb R^m)}.$$

Next we consider the term $\mathcal C_5$, which can be consider as a cross term in between $\mathcal C_1$ and $\mathcal C_9$.  To continue, we write
\begin{align*}
&\partial_{t_2}^2\Big( F  \cdot \widetilde{R^{(1)}_i \ast R^{(2)}_j\ast_2 g} + \widetilde{ R^{(1)}_i \ast f}\, \cdot \widetilde{R^{(2)}_j\ast_2 g} + \widetilde{R^{(2)}_j \ast_2 f}\, \cdot \widetilde{R^{(1)}_i \ast g} + \widetilde{ R^{(2)}_j\ast_2 R^{(1)}_i\ast f}\, \cdot G\Big)\\
&=\partial_{t_2}^2\Big( F  \cdot \widetilde{R^{(2)}_j\ast_2 (R^{(1)}_i \ast  g)}  + \widetilde{R^{(2)}_j \ast_2 f}\, \cdot \widetilde{R^{(1)}_i \ast g} \Big)\\
&\quad+\partial_{t_2}^2\Big(  \widetilde{ R^{(1)}_i \ast f}\, \cdot \widetilde{R^{(2)}_j\ast_2 g}  + \widetilde{ R^{(2)}_j\ast_2 (R^{(1)}_i\ast f)}\, \cdot G\Big)\\
&= E_1+E_2.
\end{align*}
For the term $E_1$, we write
\begin{align*}
E_1
&=-2 \partial_{x_{2,j}}\partial_{t_2} \Big( F\cdot  \widetilde {R^{(1)}_i \ast  g} \Big)+\nabla_{x_2}\Big( \nabla_{x_2} \widetilde{R^{(2)}_j \ast_2 f}  \cdot \widetilde {R^{(1)}_i \ast  g} + F  \cdot  \nabla_{x_2}  \widetilde {R^{(2)}_j \ast_2 R^{(1)}_i \ast  g} \\
&\quad -\nabla_{x_2} F \cdot \widetilde{R^{(2)}_j \ast_2 R^{(1)}_i \ast  g}   - \widetilde{R^{(2)}_j \ast_2 f} \cdot  \nabla_{x_2}  \widetilde {R^{(1)}_i \ast  g} \Big).  
\end{align*}
For the term $E_2$, we write
\begin{align*}
E_2
&=-2 \partial_{x_{2,j}}\partial_{t_2} \Big( \widetilde{ R^{(1)}_i \ast f}\cdot G \Big)+  \nabla_{x_2}\Big( \nabla_{x_2} \widetilde{R^{(2)}_j \ast_2R^{(1)}_i \ast f}  \cdot G + \widetilde{ R^{(1)}_i \ast f}  \cdot  \nabla_{x_2}  \widetilde {R^{(2)}_j \ast_2   g} \\
&\quad -\nabla_{x_2} \widetilde{ R^{(1)}_i \ast f} \cdot \widetilde{R^{(2)}_j \ast_2 g}   - \widetilde{R^{(2)}_j \ast_2  f} \cdot  \nabla_{x_2} G \Big).  
\end{align*}
As a consequence, by substituting the above 10 terms in the right-hand side of the equalities $E_1$ and $E_2$ back in to the term $\mathcal C_5$,  we have that 
\begin{align*}
\mathcal {C}_5&=2\int_{\mathbb R^{n+1}_+\times\mathbb R^{m+1}_+}  t_1 \partial_{t_1}t_2\partial_{x_{2,j}}  B \cdot  \partial_{t_1}  \partial_{t_2} \Big( F\cdot  \widetilde {R^{(1)}_i \ast  g} \Big)
     dx_1dx_2dt_1dt_2\\
    &\quad- \int_{\mathbb R^{n+1}_+\times\mathbb R^{m+1}_+}  t_1 \partial_{t_1}t_2\nabla_{x_2} B \cdot  \partial_{t_1}  \Big( \nabla_{x_2} \widetilde{R^{(2)}_j \ast_2 f}  \cdot \widetilde {R^{(1)}_i \ast  g}\Big)
     dx_1dx_2dt_1dt_2\\
    &\quad- \int_{\mathbb R^{n+1}_+\times\mathbb R^{m+1}_+}  t_1 \partial_{t_1}t_2 \nabla_{x_2}B \cdot  \partial_{t_1}  \Big(  F  \cdot  \nabla_{x_2}  \widetilde {R^{(2)}_j \ast_2 R^{(1)}_i \ast  g}\Big)
     dx_1dx_2dt_1dt_2\\
    &\quad+ \int_{\mathbb R^{n+1}_+\times\mathbb R^{m+1}_+}  t_1 \partial_{t_1}t_2\nabla_{x_2} B \cdot  \partial_{t_1}  \Big(  \nabla_{x_2} F \cdot \widetilde{R^{(2)}_j \ast_2 R^{(1)}_i \ast  g} \Big)
     dx_1dx_2dt_1dt_2\\
    &\quad+ \int_{\mathbb R^{n+1}_+\times\mathbb R^{m+1}_+}  t_1 \partial_{t_1}t_2\nabla_{x_2} B \cdot  \partial_{t_1}  \Big(  \widetilde{R^{(2)}_j \ast_2 f} \cdot  \nabla_{x_2}  \widetilde {R^{(1)}_i \ast  g}  \Big)
     dx_1dx_2dt_1dt_2\\
   &\quad+ 2\int_{\mathbb R^{n+1}_+\times\mathbb R^{m+1}_+}  t_1 \partial_{t_1}t_2\partial_{x_{2,j}} B \cdot  \partial_{t_1}   \partial_{t_2} \Big(  \widetilde{ R^{(1)}_i \ast f}\cdot G  \Big)
     dx_1dx_2dt_1dt_2\\
    &\quad- \int_{\mathbb R^{n+1}_+\times\mathbb R^{m+1}_+}  t_1 \partial_{t_1}t_2\nabla_{x_2} B \cdot  \partial_{t_1}  \Big(  \nabla_{x_2} \widetilde{R^{(2)}_j \ast_2R^{(1)}_i \ast f}  \cdot G \Big)
     dx_1dx_2dt_1dt_2\\
    &\quad- \int_{\mathbb R^{n+1}_+\times\mathbb R^{m+1}_+}  t_1 \partial_{t_1}t_2\nabla_{x_2} B \cdot  \partial_{t_1}  \Big(  \widetilde{ R^{(1)}_i \ast f}  \cdot  \nabla_{x_2}  \widetilde {R^{(2)}_j \ast_2   g}  \Big)
     dx_1dx_2dt_1dt_2\\
    &\quad+ \int_{\mathbb R^{n+1}_+\times\mathbb R^{m+1}_+}  t_1 \partial_{t_1}t_2\nabla_{x_2} B \cdot  \partial_{t_1}  \Big(  \nabla_{x_2} \widetilde{ R^{(1)}_i \ast f} \cdot \widetilde{R^{(2)}_j \ast_2 g}  \Big)
     dx_1dx_2dt_1dt_2\\
    &\quad+ \int_{\mathbb R^{n+1}_+\times\mathbb R^{m+1}_+}  t_1 \partial_{t_1}t_2\nabla_{x_2} B \cdot  \partial_{t_1}  \Big(  \widetilde{R^{(2)}_j \ast_2  f} \cdot  \nabla_{x_2} G  \Big)
     dx_1dx_2dt_1dt_2\\
 &=: \mathcal {C}_{5,1}+\cdots+\mathcal {C}_{5,10}.
\end{align*}
By applying  Proposition 
\ref{prop BMO estimate} to these terms, we obtain that  they are all controlled by 
$$C\|b\|_{{\rm BMO}_{\mathcal{F}}(\mathbb R^n\times \mathbb R^m)} \|g\|_{L^p(\mathbb R^n\times \mathbb R^m)}
\|f\|_{L^{p'}(\mathbb R^n\times \mathbb R^m)}.$$
The estimates for the term $\mathcal C_6$ can be handled symmetrically, and we get the same upper bound for $\mathcal C_6$ as that for $\mathcal C_5$  above.

For the term $\mathcal C_7$, first note that 
$  \partial_{t_2}^2 B = -\Delta_{x_2} B = -\nabla_{x_2}\cdot \nabla_{x_2} B $. Hence we can write
\begin{align*}
\mathcal {C}_7=-&\int_{\mathbb R^{n+1}_+\times\mathbb R^{m+1}_+}
  t_1 t_2  \nabla_{x_2}  B \cdot   \nabla_{x_2}  \partial_{t_1}^2\Big( F  \cdot \widetilde{R^{(1)}_i \ast R^{(2)}_j\ast_2 g} + \widetilde{ R^{(1)}_i \ast f}\, \cdot \widetilde{R^{(2)}_j\ast_2 g} \\
  &\hskip1cm+ \widetilde{R^{(2)}_j \ast_2 f}\, \cdot \widetilde{R^{(1)}_i \ast g} + \widetilde{ R^{(2)}_j\ast_2 R^{(1)}_i\ast f}\, \cdot G\Big) dx_1dx_2dt_1dt_2.
  \end{align*}
Similar to the calculation in the terms $E_1$ and $E_2$ in the estimate of $\mathcal C_5$, we can now decompose 
\begin{align*}
\partial_{t_1}^2\Big( F  \cdot \widetilde{R^{(1)}_i \ast R^{(2)}_j\ast_2 g} + \widetilde{ R^{(1)}_i \ast f}\, \cdot \widetilde{R^{(2)}_j\ast_2 g} + \widetilde{R^{(2)}_j \ast_2 f}\, \cdot \widetilde{R^{(1)}_i \ast g} + \widetilde{ R^{(2)}_j\ast_2 R^{(1)}_i\ast f}\, \cdot G\Big) 
  \end{align*}
  into 10 terms, which further give 
  $$C_7= \mathcal {C}_{7,1}+\cdots+\mathcal {C}_{7,10}.$$
  Then by applying  Proposition 
\ref{prop BMO estimate} to these terms, we obtain that  they are all controlled by 
$$C\|b\|_{{\rm BMO}_{\mathcal{F}}(\mathbb R^n\times \mathbb R^m)} \|g\|_{L^p(\mathbb R^n\times \mathbb R^m)}
\|f\|_{L^{p'}(\mathbb R^n\times \mathbb R^m)}.$$
  
 The estimates for the term $\mathcal C_8$ can be handled symmetrically, and we get the same upper bound for $\mathcal C_7$ above. 
\end{proof}

\section{Upper bound of the big commutator $[b,R_{j,k}]$}\label{s4}

We derive a general upper bound result for commutators of any flag singular integral. The proof is based on the $A_{\mathcal{F},p}$ weighted estimate of flag singular integral operators and a Cauchy integral trick that goes back to the work of Coifman, Rochberg, and Weiss \cite{crw}. Roughly speaking, this technique allows one to bootstrap the weighted estimate for an arbitrary linear operator to that of its commutators of any order. This is the first time this idea is explored in the multi-parameter flag setting. In fact, although not needed for our upper bound proof, we demonstrate the bootstrapping result in the general higher order, two-weight setting. 

\subsection{$A_p$ weight and little bmo in the flag setting}\label{Ap}

%we need the auxiliary results on the Muckenhoupt $A_p$ weights in the flag setting, which is of independent interest in the literature.

To begin with, we define the Muckenhoupt $A_p$ weights in the flag setting, which consists of positive, locally integrable functions $w$ satisfying
\begin{equation}\label{Apdef}
[w]_{A_{\mathcal{F},p}}:=\sup_{R\in\mathcal{R}_{\mathcal{F}}}\left({1\over |R|}\int_R w(x,y) \,dx dy\right)\left({1\over |R|}\int_R w(x,y)^{1-p'}\,d xd y\right)^{p-1}<\infty,\quad 1<p<\infty,
\end{equation}where $p'$ denotes the H\"older conjugate of $p$. The following result of Wu \cite{W} provides a way of approaching the $A_{\mathcal{F},p}$ weights via the classical weights:
\begin{equation}\label{Apintersect}
A_{\mathcal{F},p}= A_p\cap A_p^{(2)},\qquad \forall 1<p<\infty,
\end{equation}where $A_p$ is the classical Muckenhoupt $A_p$ class of weights on $\R^{n+m}$, and $A_p^{(2)}$ consists of weights $w(x,y)$ such that $w(x,\cdot)\in A_p$ with uniformly bounded characteristics for a.e. fixed $x\in\R^n$.

We first show that a similar relation holds true for $\text{bmo}_{\mathcal{F}}$, which will be a useful tool for us in the study of this space.
\begin{lem}\label{bmointersect}
Let ${\rm BMO}(\R^{n+m})$ denote the classical John-Nirenberg BMO space on $\R^{n+m}$, and ${\rm BMO}^{(2)}(\mathbb{R}^m)$ be the space consisting of functions $f(x,y)$ such that $f(x,\cdot)\in\text{BMO}(\mathbb{R}^m)$ for a.e. fixed $x\in\mathbb{R}^n$ with uniformly bounded norm. There holds
\[
{\rm bmo}_{\mathcal{F}}(\mathbb{R}^{n+m})= {\rm BMO}(\R^{n+m})\cap {\rm BMO}^{(2)}(\mathbb{R}^m)
\]with comparable norms.
\end{lem}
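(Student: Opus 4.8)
The plan is to prove the two inclusions ${\rm bmo}_{\mathcal F}(\R^{n+m})\subseteq {\rm BMO}(\R^{n+m})\cap {\rm BMO}^{(2)}(\R^m)$ and ${\rm BMO}(\R^{n+m})\cap {\rm BMO}^{(2)}(\R^m)\subseteq {\rm bmo}_{\mathcal F}(\R^{n+m})$, each with a quantitative comparison of the relevant quantities; together these yield the asserted identity with comparable norms. The first inclusion is ``soft'' — it amounts to testing \eqref{bmodef} against cleverly chosen flag rectangles — while the second requires a genuine decomposition of the oscillation over a flag rectangle.

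\emph{The inclusion $\subseteq$.} Every axes-parallel cube $Q\subset\R^{n+m}$ factors as $Q=Q_1\times Q_2$ with $Q_1\subset\R^n$, $Q_2\subset\R^m$ cubes of equal side length, so $\ell(Q_1)\le\ell(Q_2)$ and $Q\in\mathcal R_{\mathcal F}$; hence \eqref{bmodef} gives $\|b\|_{{\rm BMO}(\R^{n+m})}\lesssim\|b\|_{{\rm bmo}_{\mathcal F}}$ at once. For the ${\rm BMO}^{(2)}$ part, fix a cube $Q_2\subset\R^m$ and, for a cube $I\subset\R^n$ with $\ell(I)\le\ell(Q_2)$, apply \eqref{bmodef} to the flag rectangle $R=I\times Q_2$; since the mean of $b(x,\cdot)$ over $Q_2$ is within the $Q_2$-average of $|b(x,\cdot)-\langle b\rangle_R|$ of $\langle b\rangle_R$, one obtains
\[
\frac{1}{|I|}\int_I\Big(\frac{1}{|Q_2|}\int_{Q_2}\big|b(x,y)-\langle b(x,\cdot)\rangle_{Q_2}\big|\,dy\Big)\,dx\le 2\,\|b\|_{{\rm bmo}_{\mathcal F}}.
\]
Letting $\ell(I)\to0$ and invoking the Lebesgue differentiation theorem in the $x$-variable for the function $x\mapsto\frac{1}{|Q_2|}\int_{Q_2}|b(x,y)-\langle b(x,\cdot)\rangle_{Q_2}|\,dy\in L^1_{loc}(\R^n)$ shows that this $Q_2$-average is $\le 2\|b\|_{{\rm bmo}_{\mathcal F}}$ for a.e.\ $x$. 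Running this over the countable family of cubes $Q_2$ with rational centres and side lengths, and then passing to arbitrary cubes by density, produces a single null set off which $\|b(x,\cdot)\|_{{\rm BMO}(\R^m)}\le 2\|b\|_{{\rm bmo}_{\mathcal F}}$.

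\emph{The inclusion $\supseteq$.} Let $b\in{\rm BMO}(\R^{n+m})\cap{\rm BMO}^{(2)}(\R^m)$ and let $R=I\times J\in\mathcal R_{\mathcal F}$. Enlarging $I$ slightly reduces matters, at the cost of only a dimensional constant, to the case $\ell(J)=2^N\ell(I)$ with $N\ge0$ an integer, so that $J$ partitions into $K:=2^{Nm}$ cubes $J_1,\dots,J_K\subset\R^m$ of side length $\ell(I)$ and each $I\times J_k$ is a \emph{cube} in $\R^{n+m}$. Decompose
\[
b(x,y)-\langle b\rangle_R=\big(b(x,y)-\langle b(x,\cdot)\rangle_J\big)+\big(\langle b(x,\cdot)\rangle_J-\langle b\rangle_R\big).
\]
Averaging the first bracket over $R$ bounds it by $\frac{1}{|I|}\int_I\big(\frac{1}{|J|}\int_J|b(x,\cdot)-\langle b(x,\cdot)\rangle_J|\big)\,dx\le\|b\|_{{\rm BMO}^{(2)}(\R^m)}$. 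For the second bracket, put $\phi(x):=\langle b(x,\cdot)\rangle_J=\frac1K\sum_{k=1}^K\psi_k(x)$ with $\psi_k(x):=\langle b(x,\cdot)\rangle_{J_k}$; since $\langle b\rangle_R=\langle\phi\rangle_I$, the $I$-average of $|\phi-\langle\phi\rangle_I|$ is at most $\frac1K\sum_k$ of the $I$-average of $|\psi_k-\langle\psi_k\rangle_I|$, and, because $\langle\psi_k\rangle_I=\langle b\rangle_{I\times J_k}$ and averaging in $y$ is a contraction, each such term is dominated by the oscillation of $b$ over the cube $I\times J_k$, hence by $\|b\|_{{\rm BMO}(\R^{n+m})}$. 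Summing the $K$ terms and dividing by $K$ gives $\frac{1}{|I|}\int_I|\phi(x)-\langle\phi\rangle_I|\,dx\le\|b\|_{{\rm BMO}(\R^{n+m})}$, whence $\frac{1}{|R|}\int_R|b-\langle b\rangle_R|\lesssim\|b\|_{{\rm BMO}(\R^{n+m})}+\|b\|_{{\rm BMO}^{(2)}(\R^m)}$ uniformly in $R\in\mathcal R_{\mathcal F}$.

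\emph{Main obstacle.} The estimates are elementary once the above decomposition is in hand; the two points demanding care are the uniformity in $Q_2$ of the exceptional set in the ${\rm BMO}^{(2)}$ identification — dealt with by the countable-dense-family argument — and, in the converse inclusion, the necessity of distributing the average over the $2^{Nm}$ individual cubes $I\times J_k$ instead of telescoping the numbers $\langle b\rangle_{I\times J_k}$ against one another: the naive telescoping would cost a spurious factor $\log\!\big(\ell(J)/\ell(I)\big)$, which is unacceptable, whereas the cube-by-cube argument is loss-free. This is precisely the place where the flag constraint $\ell(I)\le\ell(J)$ is used favourably, by covering the large cube $J$ by cubes as small as $I$.
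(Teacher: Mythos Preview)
Your proof is correct. The inclusion $\subseteq$ is handled exactly as in the paper (cubes for ${\rm BMO}(\R^{n+m})$, shrinking $I$ plus Lebesgue differentiation for ${\rm BMO}^{(2)}$), but for the reverse inclusion $\supseteq$ your route is genuinely different from the paper's.

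The paper does not argue $\supseteq$ directly. Instead it passes through weights: given $b\in{\rm BMO}(\R^{n+m})\cap{\rm BMO}^{(2)}(\R^m)$, the classical exp--log connection gives $e^{\eta b}\in A_2(\R^{n+m})\cap A_2^{(2)}$ for small $\eta$; Wu's characterization \eqref{Apintersect} then yields $e^{\eta b}\in A_{\mathcal F,2}$; and finally Proposition~\ref{explog}(i) (proved by the usual Jensen argument) sends this back to $\eta b\in{\rm bmo}_{\mathcal F}$. Your argument replaces this detour by an elementary cube-by-cube decomposition: partition the long side $J$ into subcubes $J_k$ of side $\ell(I)$ so that each $I\times J_k$ is a genuine cube in $\R^{n+m}$, and average the oscillations. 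This is more self-contained --- it does not invoke the external result of Wu --- and it makes transparent the point you flag about avoiding the $\log(\ell(J)/\ell(I))$ loss that a naive chain-of-cubes telescoping would incur. The paper's indirect approach, on the other hand, dovetails with what comes next: the exp--log bridge of Proposition~\ref{explog} is precisely the tool used in the Cauchy-integral bootstrapping of Theorem~\ref{bootstrap}, so from the paper's point of view nothing is wasted by proving $\supseteq$ this way.
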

\begin{proof}
The inclusion
\[
\text{bmo}_{\mathcal{F}}(\mathbb{R}^{n+m})\subset \text{BMO}(\R^{n+m})\cap \text{BMO}^{(2)}(\mathbb{R}^m)
\]can be easily verified. Indeed, the inclusion $\text{bmo}_{\mathcal{F}}(\mathbb{R}^{n+m})\subset \text{BMO}(\R^{n+m})$ is obvious from the definition. Now fix $x\in\mathbb{R}^n$. For any cube $J\subset\R^m$, one can find a sequence of cubes $I_k\subset\R^n$ such that $\ell(I_k)\leq\ell(J)$ and $I_k$ shrinks to the point $\{x\}$ as $k\to\infty$. The containment thus follows from the Lebesgue differentiation theorem. 

The other inclusion (``$\supset$'') of the lemma follows from Proposition \ref{explog} below, which establishes the exp-log connection between $A_{\mathcal{F},p}$ weights and $\text{bmo}_{\mathcal{F}}(\R^{n+m})$, similarly as in the one-parameter and the product setting.
\end{proof}

\begin{prop}\label{explog}
Suppose $w$ is a weight and $1<p<\infty$. We have
\begin{enumerate}
\item[{\rm(i)}] if $w\in A_{\mathcal{F},p}$, then $\log w\in \text{bmo}_{\mathcal{F}}(\R^{n+m})$;
\item[{\rm(ii)}] if $\log w\in \text{bmo}_{\mathcal{F}}(\R^{n+m})$, then $w^\eta \in A_{\mathcal{F},p}$ for sufficiently small $\eta>0$.
\end{enumerate}
\end{prop}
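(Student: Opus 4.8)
The plan is to reduce both implications to the classical one-parameter exp--log correspondence between Muckenhoupt weights and $\mathrm{BMO}$, using two ingredients that are already available: Wu's identity $A_{\mathcal{F},p}=A_p\cap A_p^{(2)}$ from \eqref{Apintersect}, and the elementary geometric observation that a flag rectangle $R=I\times J$ with $\ell(I)\le\ell(J)$ tiles into cubes $I\times J_k\subset\R^{n+m}$ of side $\ell(I)$ (while, conversely, every cube of $\R^{n+m}$ is itself a flag rectangle, since there $\ell(I)=\ell(J)$). I would prove (i) by combining the isotropic and fiberwise weighted theory on the two factors and then transferring that information back to flag rectangles by such a tiling, and (ii) by running the classical ${\rm BMO}\Rightarrow A_p$ construction on each factor with a common dilation parameter.

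\emph{Proof of (i).} Suppose $w\in A_{\mathcal{F},p}$. By \eqref{Apintersect}, $w\in A_p(\R^{n+m})$ and $w(x,\cdot)\in A_p(\R^m)$ uniformly in $x$, both characteristics being bounded by $[w]_{A_{\mathcal{F},p}}$; the quantitative one-parameter exp--log theorem then gives $\phi:=\log w\in{\rm BMO}(\R^{n+m})$ together with $\sup_x\|\phi(x,\cdot)\|_{{\rm BMO}(\R^m)}<\infty$, both norms controlled by $[w]_{A_{\mathcal{F},p}}$ and $p,n,m$. Fix a flag rectangle $R=I\times J$, $\ell(I)\le\ell(J)$, and put $g(y):=\frac{1}{|I|}\int_I\phi(x,y)\,dx$. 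A Fubini argument gives $\|g\|_{{\rm BMO}(\R^m)}\le\sup_x\|\phi(x,\cdot)\|_{{\rm BMO}(\R^m)}$, and since $\langle\phi\rangle_R=\langle g\rangle_J$ the ``coarse'' piece $\frac{1}{|R|}\int_R|g(y)-\langle g\rangle_J|\,dxdy=\frac{1}{|J|}\int_J|g-\langle g\rangle_J|\,dy$ is acceptable. For the remainder $h:=\phi-g$, which satisfies $\frac{1}{|I|}\int_I h(x,y)\,dx=0$ for a.e.\ $y$, tile $J=\bigcup_kJ_k$ with $\ell(J_k)=\ell(I)$; this uses the flag condition $\ell(I)\le\ell(J)$ in an essential way and makes each $Q_k:=I\times J_k$ a genuine cube of $\R^{n+m}$ with $\langle h\rangle_{Q_k}=0$. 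Viewing $g$ as a function on $\R^{n+m}$ constant in the first variable, $\|g\|_{{\rm BMO}(\R^{n+m})}=\|g\|_{{\rm BMO}(\R^m)}$, so $\|h\|_{{\rm BMO}(\R^{n+m})}\lesssim\|\phi\|_{{\rm BMO}(\R^{n+m})}+\|g\|_{{\rm BMO}(\R^m)}$ and therefore
\[
\frac{1}{|R|}\int_R|h|=\sum_k\frac{|Q_k|}{|R|}\cdot\frac{1}{|Q_k|}\int_{Q_k}\big|h-\langle h\rangle_{Q_k}\big|\le\|h\|_{{\rm BMO}(\R^{n+m})}.
\]
Adding the two pieces shows $\|\phi\|_{{\rm bmo}_{\mathcal{F}}(\R^{n+m})}$ is bounded by a constant depending only on $[w]_{A_{\mathcal{F},p}}$, $p$, $n$, $m$. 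The same splitting, applied to an arbitrary $f$ in place of $\phi$, also supplies the nontrivial inclusion of Lemma \ref{bmointersect}, so this route resolves the apparent circularity there.

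\emph{Proof of (ii), and the main obstacle.} Suppose $\log w\in{\rm bmo}_{\mathcal{F}}(\R^{n+m})$. Since every cube of $\R^{n+m}$ is a flag rectangle, $\log w\in{\rm BMO}(\R^{n+m})$; and the shrinking-cube argument from the easy inclusion in Lemma \ref{bmointersect} gives $\log w(x,\cdot)\in{\rm BMO}(\R^m)$ with norm uniform in $x$. The classical implication ``$\psi\in{\rm BMO}\Rightarrow e^{\eta\psi}\in A_p$ for $\eta$ small depending only on $p$ and $\|\psi\|_{{\rm BMO}}$'' then applies on $\R^{n+m}$ and, with the \emph{same} $\eta$ (thanks to the uniformity of the fiber norms), fiberwise on $\R^m$; hence for all sufficiently small $\eta>0$ one has $w^\eta\in A_p(\R^{n+m})$ and $w^\eta(x,\cdot)\in A_p(\R^m)$ with uniformly bounded characteristics, i.e.\ $w^\eta\in A_{\mathcal{F},p}$ by \eqref{Apintersect}. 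The genuinely flag-specific point, and the one I expect to require the most care, is the tiling step in (i): the constraint $\ell(I)\le\ell(J)$ is precisely what lets $R$ be decomposed into $\R^{n+m}$-cubes, and it is the only mechanism by which the isotropic and the fiberwise oscillation estimates recombine on flag rectangles without loss. Everything else is bookkeeping around \eqref{Apintersect} and standard weighted theory, provided one is careful to invoke only quantitative versions of those classical facts, so that the ${\rm bmo}_{\mathcal{F}}$ norm produced in (i) and the admissible range of $\eta$ in (ii) depend only on the relevant weight/BMO constants and on $p,n,m$.
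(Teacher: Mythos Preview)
Your proof is correct, and part (ii) is essentially identical to the paper's argument. Part (i), however, takes a genuinely different route. The paper proves (i) directly from the $A_{\mathcal{F},2}$ condition on flag rectangles: writing $\varphi=\log w$, the $A_{\mathcal{F},2}$ inequality reads $\langle e^{\varphi-\langle\varphi\rangle_R}\rangle_R\,\langle e^{\langle\varphi\rangle_R-\varphi}\rangle_R\le[w]_{A_{\mathcal{F},2}}$; Jensen forces each factor $\ge1$, hence each factor is $\le[w]_{A_{\mathcal{F},2}}$, and $t\le e^t$ finishes. This is a three-line argument that never leaves the flag world. You instead pass through \eqref{Apintersect} to classical $A_p$, apply one-parameter exp--log to land in $\mathrm{BMO}(\R^{n+m})\cap\mathrm{BMO}^{(2)}(\R^m)$, and then run a tiling argument to push this back to flag rectangles.

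What each approach buys: the paper's direct argument is considerably shorter and avoids invoking \eqref{Apintersect} for (i). Your detour is longer but delivers more: the tiling step (splitting $\phi=g+h$ and partitioning $R$ into $\R^{n+m}$-cubes using $\ell(I)\le\ell(J)$) is in fact a direct proof of the nontrivial inclusion $\mathrm{BMO}(\R^{n+m})\cap\mathrm{BMO}^{(2)}(\R^m)\subset\mathrm{bmo}_{\mathcal{F}}$ in Lemma~\ref{bmointersect}, whereas the paper obtains that inclusion only \emph{a posteriori} by combining both halves of Proposition~\ref{explog} with \eqref{Apintersect}. One minor technical remark: your tiling $J=\bigcup_k J_k$ with $\ell(J_k)=\ell(I)$ assumes $\ell(J)/\ell(I)$ is an integer; for general rectangles you should either restrict to dyadic $R$ (equivalent up to constants) or cover $J$ with bounded overlap and absorb the constant.
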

\begin{proof}
One observes directly from the definition that
\[
A_{\mathcal{F},p}\subset A_{\mathcal{F},q},\qquad \forall 1<p\leq q<\infty,
\]and
\[
w\in A_{\mathcal{F},p}\iff w^{1-p'}\in A_{\mathcal{F},p'},\qquad \forall 1<p<\infty.
\]Therefore, it suffices to prove the case $p=2$. 

We first prove (i). Suppose $w\in A_{\mathcal{F},2}$ and let $\varphi=\log w$. Then, for any $R\in\mathcal{R}_{\mathcal{F}}$ the $A_{\mathcal{F},2}$ condition implies that
\[
\left({1\over |R|}\int_R e^{\varphi(x,y)-\avg{\varphi}_R}\,dxdy\right)\left({1\over |R|}\int_R e^{\avg{\varphi}_R-\varphi(x,y)}\,dxdy\right)\leq[w]_{A_{\mathcal{F},2}}<\infty.
\]By Jensen's inequality we have each of the factors above is at least $1$ and at most $[w]_{A_{\mathcal{F},2}}$. Therefore, the inequality below holds:
\[
{1\over |R|}\int_R e^{|\varphi(x,y)-\avg{\varphi}_R|}\,dxdy\leq 2[w]_{A_{\mathcal{F},2}},
\]which, using the trivial estimate $t\leq e^t$, implies that
\[
{1\over |R|}\int_R |\varphi(x,y)-\avg{\varphi}_R|\,dxdy\leq 2[w]_{A_{\mathcal{F},2}}.
\]Hence, $\varphi\in\text{bmo}_{\mathcal{F}}(\mathbb{R}^{n+m})$.

We now prove (ii). Let $\varphi=\log w\in\text{bmo}_{\mathcal{F}}(\R^{n+m})$, it follows from (\ref{bmointersect}) that $\varphi\in \text{BMO}(\R^{n+m})$ and $\varphi\in\text{BMO}^{(2)}(\mathbb{R}^m)$. According to the classical exp-log connection between BMO and $A_2$, there hold for sufficiently small $\eta>0$ that
\[
e^{\eta\varphi(\cdot,\cdot)}\in A_2(\R^{n+m})
\]and
\[
e^{\eta\varphi(x,\cdot)}\in A_2(\R^m)\qquad \text{uniformly in}\,\, x\in\R^n.
\]Hence, (\ref{Apintersect}) implies that $e^{\eta\varphi}\in A_{\mathcal{F},2}$ for sufficiently small $\eta>0$, which completes the proof.
\end{proof}

\subsection{Upper bound of the commutator}

Given an operator $T$, define its $k$-th order commutator as
\[
C^{k}_{\vec{b}}(T):=[b_k,[b_{k-1},\cdots,[b_1,T]\cdots]],
\]where each $b_j$ is a function on $\mathbb{R}^n\times\mathbb{R}^m$, $\forall 1\leq j\leq k$.
\begin{thm}\label{bootstrap}
Let $\nu$ be a fixed weight on $\mathbb{R}^n\times\mathbb{R}^m$, $1<p<\infty$, and $T$ be a linear operator satisfying
\[
\|T\|_{L^p(\mu)\to L^p(\lambda)}\leq C_{n,m,p,T}\left([\mu]_{A_{\mathcal{F},p}},[\lambda]_{A_{\mathcal{F},p}}\right),
\]where $C_{n,m,p,T}(\cdot,\cdot)$ is an increasing function of both components, with $\mu, \lambda\in A_{\mathcal{F},p}$ and $\mu/\lambda=\nu^p$. For $k\geq 1$, let $b_j\in \text{bmo}_{\mathcal{F}}(\mathbb{R}^n\times\mathbb{R}^m)$, $1\leq j\leq k$, then there holds
\[
\|C^k_{\vec{b}}(T)\|_{L^p(\mu)\to L^p(\lambda)}\leq C_{n,m,p,k,T}\left([\mu]_{A_{\mathcal{F},p}},[\lambda]_{A_{\mathcal{F},p}}\right)\prod_{j=1}^k\|b_j\|_{\text{bmo}_{\mathcal{F}}}.
\]
\end{thm}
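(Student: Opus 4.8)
The plan is to run the Coifman--Rochberg--Weiss complex-analytic bootstrapping argument \cite{crw} with the classical Muckenhoupt class replaced throughout by the flag class $A_{\mathcal{F},p}$. For $\vec z=(z_1,\dots,z_k)\in\mathbb{C}^k$ set $\Phi(\vec z):=\sum_{j=1}^k z_j b_j$ and introduce the analytic family of operators $\Psi(\vec z)g:=e^{\Phi(\vec z)}\,T\!\left(e^{-\Phi(\vec z)}g\right)$. Differentiating one variable at a time, each step producing one further commutator bracket on the outside, one obtains the algebraic identity $C^k_{\vec b}(T)=\partial_{z_1}\cdots\partial_{z_k}\big|_{\vec z=0}\Psi(\vec z)$. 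Granting that $\vec z\mapsto\Psi(\vec z)g$ is holomorphic on a small polydisc as an $L^p(\lambda)$-valued map, Cauchy's formula then gives, for any sufficiently small $\epsilon>0$,
\[
C^k_{\vec b}(T)g=\frac{1}{(2\pi i)^k}\oint_{|z_1|=\epsilon}\cdots\oint_{|z_k|=\epsilon}\frac{\Psi(\vec z)g}{z_1^2\cdots z_k^2}\,dz_1\cdots dz_k,
\]
so that by Minkowski's integral inequality the whole theorem reduces to a uniform estimate $\sup_{|z_j|=\epsilon}\|\Psi(\vec z)\|_{L^p(\mu)\to L^p(\lambda)}\le C$ for one fixed choice of $\epsilon$, the leftover factor being exactly $\epsilon^{-k}$.

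To bound $\Psi(\vec z)$ I would observe that $e^{-\Phi(\vec z)}$ is an isometry from $L^p(\mu)$ onto $L^p(\tilde\mu)$ and $e^{\Phi(\vec z)}$ an isometry from $L^p(\tilde\lambda)$ onto $L^p(\lambda)$, where $\tilde\mu:=e^{p\,\mathrm{Re}\,\Phi(\vec z)}\mu$ and $\tilde\lambda:=e^{p\,\mathrm{Re}\,\Phi(\vec z)}\lambda$. The crucial point is that $\tilde\mu/\tilde\lambda=\mu/\lambda=\nu^p$, so the hypothesis on $T$ applies verbatim to the pair $(\tilde\mu,\tilde\lambda)$ and yields $\|\Psi(\vec z)\|_{L^p(\mu)\to L^p(\lambda)}\le C_{n,m,p,T}\!\left([\tilde\mu]_{A_{\mathcal{F},p}},[\tilde\lambda]_{A_{\mathcal{F},p}}\right)$. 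It then remains only to check that the perturbed weights $\tilde\mu,\tilde\lambda$ are still $A_{\mathcal{F},p}$ weights, with characteristics controlled in terms of $[\mu]_{A_{\mathcal{F},p}}$, $[\lambda]_{A_{\mathcal{F},p}}$, $p$, $n$, $m$ and $\sum_j\|b_j\|_{\text{bmo}_{\mathcal{F}}}$, once $\epsilon$ is small.

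This stability statement is the main obstacle, and I would derive it from the structural identities $A_{\mathcal{F},p}=A_p\cap A_p^{(2)}$ (see \eqref{Apintersect}) and $\text{bmo}_{\mathcal{F}}={\rm BMO}(\R^{n+m})\cap {\rm BMO}^{(2)}(\R^m)$ (Lemma \ref{bmointersect}). Since $\mathrm{Re}\,\Phi(\vec z)=\sum_j s_j b_j$ with $|s_j|\le\epsilon$, the function $\varphi:=p\,\mathrm{Re}\,\Phi(\vec z)$ has ${\rm BMO}(\R^{n+m})$ norm and $\sup_{x}\|\varphi(x,\cdot)\|_{{\rm BMO}(\R^m)}$ both $\le p\epsilon\sum_j\|b_j\|_{\text{bmo}_{\mathcal{F}}}$. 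I would then invoke the classical fact that an $A_p$ weight remains in $A_p$, with comparable characteristic, after multiplication by $e^{\varphi}$ provided the ${\rm BMO}$ norm of $\varphi$ is small enough relative to the original characteristic --- a standard consequence of the reverse H\"older self-improvement of $A_p$ together with the John--Nirenberg inequality. Applying this on $\R^{n+m}$ to the $A_p$-components of $\mu,\lambda$, and on $\R^m$ --- uniformly in $x\in\R^n$ --- to their $A_p^{(2)}$-components, and reassembling through \eqref{Apintersect}, yields $\tilde\mu,\tilde\lambda\in A_{\mathcal{F},p}$ with the required bounds, for $\epsilon$ below a threshold depending only on $[\mu]_{A_{\mathcal{F},p}}$, $[\lambda]_{A_{\mathcal{F},p}}$, $p$, $n$, $m$ and $\sum_j\|b_j\|_{\text{bmo}_{\mathcal{F}}}$. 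The delicate part here is keeping this threshold and the output characteristics uniform in the auxiliary variable $x$ in the $A_p^{(2)}$-component; the exp--log dictionary of Proposition \ref{explog} and its proof are the right tools to make these uniform statements quantitative.

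Finally I would clean up the bookkeeping. If some $\|b_j\|_{\text{bmo}_{\mathcal{F}}}=0$ then $b_j$ is a.e.\ constant and $C^k_{\vec b}(T)=0$, so assume all are nonzero; using that $C^k_{\vec b}(T)$ is linear in each $b_j$ separately, rescale so that $\|b_j\|_{\text{bmo}_{\mathcal{F}}}=1$, extracting the factor $\prod_j\|b_j\|_{\text{bmo}_{\mathcal{F}}}$. Then $\sum_j\|b_j\|_{\text{bmo}_{\mathcal{F}}}=k$, so the admissible $\epsilon$, and hence the final constant $\epsilon^{-k}C_{n,m,p,T}(\cdots)$, depends only on $[\mu]_{A_{\mathcal{F},p}}$, $[\lambda]_{A_{\mathcal{F},p}}$, $p$, $n$, $m$, $k$, and is increasing in the two characteristics, as required; undoing the rescaling restores the product. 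The only genuinely routine loose end is the holomorphy of $\vec z\mapsto\Psi(\vec z)g$ on a small polydisc used above: for $g$ bounded with compact support it follows from the local integrability of $|b_j|^N\mu$ (again via reverse H\"older) together with the fact that bounded linear operators and multiplication by $e^{\pm\Phi(\vec z)}$ preserve holomorphy, the general $g\in L^p(\mu)$ being handled by density once the bound is established on this dense subclass.
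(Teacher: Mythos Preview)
Your proposal is correct and follows essentially the same route as the paper: the complex-analytic representation $C^k_{\vec b}(T)=\partial_{z_1}\cdots\partial_{z_k}|_{\vec z=0}\,e^{\Phi}Te^{-\Phi}$, Cauchy's formula on a polydisc, Minkowski, and then stability of $A_{\mathcal{F},p}$ under small $\text{bmo}_{\mathcal{F}}$ perturbations obtained by splitting via $A_{\mathcal{F},p}=A_p\cap A_p^{(2)}$ and $\text{bmo}_{\mathcal{F}}={\rm BMO}\cap{\rm BMO}^{(2)}$ and invoking the one-parameter result (the paper cites Hyt\"onen's version explicitly as its Lemma~\ref{lem}). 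The only organizational difference is that the paper chooses the radii $\delta_1,\dots,\delta_k$ \emph{recursively}, with $\delta_j\sim \|b_j\|_{\text{bmo}_{\mathcal{F}}}^{-1}$ times an inverse weight characteristic updated after the first $j-1$ perturbations, and then applies the stability lemma one factor at a time; you instead normalize each $b_j$ to unit norm up front and use a single common radius $\epsilon$, absorbing the $k$-dependence into the threshold --- an equivalent bookkeeping that yields the same conclusion.
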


Assuming Theorem \ref{bootstrap}, in order to derive (even unweighted) upper estimate for commutator of operator $T$, it suffices to know the corresponding weighted estimate for $T$ itself. When $T$ is a flag singular integral operator (which includes the flag Riesz transform $R_{j,k}$), such a result is obtained by Han, Lin and Wu in \cite{HLW}.
\begin{defn}\label{FlagSIO}
A flag singular integral $T_\mathcal{F}:\,f\mapsto \mathcal{K}*f$ is defined via a flag kernel $\mathcal{K}$ on $\R^n\times \R^m$, which is a distribution on $\R^{n+m}$ that coincides with a $C^\infty$ function away from the coordinate subspace $\{ (0,y) \}\subset\R^{n+m}$ and satisfies
\begin{enumerate}
  \item[\rm (i)] (differential inequalities) For each $\alpha=(\alpha_1, \ldots \alpha_n)$, $\beta=(\beta_1, \ldots \beta_n)$
$$ |\partial_{x}^{\alpha}\partial_{y}^{\beta} \mathcal{K}(x,y)|\lesssim |x|^{-n-|\alpha|}
 (|x|+|y|)^{-m-|\beta|}
$$
for all $(x,y)\in \R^{n+m}$ with $|x|\not= 0;$
  \item[\rm (ii)] (cancellation conditions)
  \begin{enumerate}
    \item $$\bigg| \int_{\R^m} \partial_{x}^{\alpha}\mathcal{K}(x,y) \psi_1(\delta y)dy \bigg|\leq C_\alpha |x|^{-n-|\alpha|}$$
    for every multi-index $\alpha$ and for every normalized bump function $\psi_1$ on $\R^m$ and every $\delta>0$;
  \item $$\bigg| \int_{\R^n} \partial_{y}^{\beta}\mathcal{K}(x,y) \psi_2(\delta y)dy \bigg|\leq C_\beta |y|^{-m-|\beta|}$$
    for every multi-index $\beta$ and for every normalized bump function $\psi_2$ on $\R^n$ and every $\delta>0$;
  \item $$\bigg| \int_{\R^{n+m}} \mathcal{K}(x,y) \psi_3(\delta_1x,\delta_2 y)dxdy \bigg|\leq C$$
   for every normalized bump function $\psi_3$ on $\R^{n+m}$ and every $\delta_1,\delta_2>0$.
     \end{enumerate}
   \end{enumerate}
\end{defn}

\begin{thm}[Remark 1.4 of \cite{HLW}]\label{borrow}
Let $1<p<\infty$ and $w\in A_{\mathcal{F},p}(\R^{n+m})$, there holds
\[
\|T_\mathcal{F}(f)\|_{L^p_w(\R^{n+m})}\leq C_p \|f\|_{L^p_w(\R^{n+m})},\quad \forall f\in L^p_w(\R^{n+m}).
\]
\end{thm}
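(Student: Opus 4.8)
This statement is quoted from \cite{HLW}; here we indicate the route one would take to prove it. The starting point is the structural description of flag kernels going back to M\"uller--Ricci--Stein \cite{MRS,MRS2} and Nagel--Ricci--Stein \cite{NRS}: a flag kernel $\mathcal{K}$ on $\R^n\times\R^m$ in the sense of Definition \ref{FlagSIO} can be written as a convolution $\mathcal{K}=K^{(1)}\ast_{\R^m}K^{(2)}$, where $K^{(1)}$ is a classical Calder\'on--Zygmund convolution kernel on $\R^{n+m}$ and $K^{(2)}$ is a classical Calder\'on--Zygmund convolution kernel on $\R^m$, the latter lifted to $\R^{n+m}$ so that it acts only in the $y$-variable and is independent of $x$. (As usual one first carries this out for smooth truncations $\mathcal{K}^\varepsilon$, keeping all constants uniform in $\varepsilon$, and then passes to the limit.) Consequently the operator $T_\mathcal{F}$ factors as a composition
\[
T_\mathcal{F}(f)=T^{(1)}\bigl(T^{(2)}_2 f\bigr),\qquad T^{(1)}g:=K^{(1)}\ast g,\quad T^{(2)}_2 f:=K^{(2)}\ast_2 f,
\]
where $T^{(1)}$ is a Calder\'on--Zygmund operator on $\R^{n+m}$ and $T^{(2)}_2$ is a Calder\'on--Zygmund operator acting on the second variable $y\in\R^m$ with the first variable as a parameter.

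With this factorization in hand, the weighted bound follows from scalar Calder\'on--Zygmund theory together with Wu's identity $A_{\mathcal{F},p}=A_p\cap A_p^{(2)}$ recalled in \eqref{Apintersect}. Fix $w\in A_{\mathcal{F},p}(\R^{n+m})$. Since $w\in A_p^{(2)}$, for a.e.\ $x\in\R^n$ the slice $w(x,\cdot)$ belongs to $A_p(\R^m)$ with $[w(x,\cdot)]_{A_p(\R^m)}$ bounded uniformly in $x$; by the classical weighted estimate of Hunt--Muckenhoupt--Wheeden and Coifman--Fefferman applied to the Calder\'on--Zygmund operator $K^{(2)}\ast_2\,\cdot\,$ on $\R^m$ (whose constant depends only on $p$, $m$ and the $A_p$ characteristic), we get
\[
\int_{\R^m}\bigl|T^{(2)}_2 f(x,y)\bigr|^p\,w(x,y)\,dy\le C\int_{\R^m}|f(x,y)|^p\,w(x,y)\,dy\qquad\text{for a.e. }x\in\R^n,
\]
and integrating in $x$ gives $\|T^{(2)}_2 f\|_{L^p_w(\R^{n+m})}\le C\|f\|_{L^p_w(\R^{n+m})}$. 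Since also $w\in A_p(\R^{n+m})$, the same weighted estimate applied to $T^{(1)}$ on $\R^{n+m}$ gives $\|T^{(1)}g\|_{L^p_w(\R^{n+m})}\le C\|g\|_{L^p_w(\R^{n+m})}$. Composing the two bounds yields
\[
\|T_\mathcal{F}(f)\|_{L^p_w(\R^{n+m})}=\bigl\|T^{(1)}\bigl(T^{(2)}_2 f\bigr)\bigr\|_{L^p_w(\R^{n+m})}\le C\,\|T^{(2)}_2 f\|_{L^p_w(\R^{n+m})}\le C\,\|f\|_{L^p_w(\R^{n+m})},
\]
with $C$ depending only on $p$, $n$, $m$ and $[w]_{A_{\mathcal{F},p}}$, as claimed. (An alternative to the last step is to prove the $L^2_w$ bound this way and then invoke Rubio de Francia extrapolation in the $A_{\mathcal{F},p}$ scale, but the direct composition argument already suffices.)

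The main obstacle is the first step: verifying that an operator associated to a flag kernel as in Definition \ref{FlagSIO} genuinely splits as $T^{(1)}\circ T^{(2)}_2$ with honest Calder\'on--Zygmund factors, and --- crucially --- that the inner factor $K^{(2)}$ may be chosen independent of $x$. This last point is precisely why the uniform-in-$x$ condition $A_p^{(2)}$, rather than some joint condition, is the correct hypothesis on the weight. Tracking the differential inequalities and the three cancellation conditions (i)--(ii) of Definition \ref{FlagSIO} through this factorization, together with the uniformity of all constants in the truncation limit, is the technical heart of the matter; it is carried out in \cite{MRS,NRS}, and in the weighted formulation stated here in \cite{HLW}. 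Once the factorization is granted, the remainder is the routine scalar weighted Calder\'on--Zygmund theory combined with \eqref{Apintersect}.
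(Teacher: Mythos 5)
First, note that the paper does not prove this statement at all: Theorem \ref{borrow} is imported verbatim as Remark 1.4 of \cite{HLW}, so there is no internal argument to compare against; what matters is whether your sketch would actually deliver the result for the whole class of operators in Definition \ref{FlagSIO}.

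Your second step (slicing in $x$ using $w\in A_p^{(2)}$ for the inner operator, then the classical weighted bound on $\R^{n+m}$ using $w\in A_p$, via \eqref{Apintersect}) is fine, and the whole argument is complete and correct for operators that genuinely are compositions, such as the flag Riesz transforms $R_{j,k}=R^{(1)}_j\ast R^{(2)}_k\ast_2$. The gap is your first step: it is not true, and it is not what \cite{MRS} or \cite{NRS} prove, that every flag kernel in the sense of Definition \ref{FlagSIO} factors as a single convolution $\mathcal{K}=K^{(1)}\ast_{\R^m}K^{(2)}$ with $K^{(1)}$ a Calder\'on--Zygmund kernel on $\R^{n+m}$ and $K^{(2)}$ one on $\R^m$. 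The structure theorem of Nagel--Ricci--Stein is weaker and of a different shape: a flag kernel is the \emph{projection} of a product kernel, $\mathcal{K}(x,y)=\int_{\R^m}K^{\sharp}(x,y-z,z)\,dz$ with $K^{\sharp}$ a two-parameter product Calder\'on--Zygmund kernel on $\R^{n+m}\times\R^m$ (of which a tensor/composition $K^{(1)}\otimes K^{(2)}$ is only the model case). A general flag kernel — already a finite sum of such compositions, or a general Marcinkiewicz-type multiplier — need not split as one composition, so the identity $T_\mathcal{F}=T^{(1)}\circ T^{(2)}_2$ on which your whole estimate rests is unavailable. The proofs in the literature go instead through that lifting: one transfers the problem to the product setting on $\R^{n+m}\times\R^m$, checks that $A_{\mathcal{F},p}$ weights lift to product $A_p$ weights there, and invokes weighted multi-parameter singular integral (or weighted flag Littlewood--Paley) theory; this is the route of Wu \cite{W} and of \cite{HLW}. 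So either restrict your claim to the composition subclass (which is all that is needed when $T_\mathcal{F}=R_{j,k}$), or replace the factorization step by the projection-of-product-kernel argument.
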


Applying Theorem \ref{bootstrap} (with the choice $\mu=\lambda=w$) together with Theorem \ref{borrow}, one obtains immediately the following.
\begin{cor}
Let $w\in A_{\mathcal{F},p}$, $1<p<\infty$ and $T$ be a flag singular integral operator as defined above. For any $k\geq 1$, $\vec{b}=(b_1,\cdots,b_k)$ where $b_j\in\text{bmo}_{\mathcal{F}}(\mathbb{R}^n\times\mathbb{R}^m)$, $j=1,\ldots,k$, there holds
\[
\|C^{k}_{\vec{b}}(T)\|_{L^p(w)\to L^p(w)}\leq C_{n,m,p,k,w,T} \prod_{j=1}^k\|b_j\|_{\text{bmo}_{\mathcal{F}}}.
\]

\end{cor}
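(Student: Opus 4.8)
The plan is to obtain the Corollary as an immediate consequence of Theorem~\ref{bootstrap} and Theorem~\ref{borrow}, with essentially no new work. First I would specialize the two-weight statement of Theorem~\ref{bootstrap} to the one-weight situation by taking $\mu=\lambda=w$. Then the auxiliary weight $\nu$ appearing in that theorem is determined by $\nu^p=\mu/\lambda=1$, so $\nu\equiv 1$ and the only hypothesis left to verify is $w\in A_{\mathcal{F},p}$, which is assumed.

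Next I would check that the flag singular integral operator $T=T_{\mathcal{F}}$ satisfies the structural assumption required by Theorem~\ref{bootstrap}, namely that $\|T\|_{L^p(\mu)\to L^p(\lambda)}$ is dominated by an increasing function of $[\mu]_{A_{\mathcal{F},p}}$ and $[\lambda]_{A_{\mathcal{F},p}}$ when $\mu=\lambda=w\in A_{\mathcal{F},p}$. This is precisely Theorem~\ref{borrow} (Remark~1.4 of \cite{HLW}): for every $w\in A_{\mathcal{F},p}$ the operator $T_{\mathcal{F}}$ is bounded on $L^p_w(\R^{n+m})$, and the operator norm depends on $[w]_{A_{\mathcal{F},p}}$ in the usual monotone fashion familiar from quantitative $A_p$ estimates. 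Hence the hypothesis of Theorem~\ref{bootstrap}, with $C_{n,m,p,T}$ increasing in both slots, holds for this $T$.

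Finally, applying Theorem~\ref{bootstrap} with $\mu=\lambda=w$ yields
\[
\|C^k_{\vec b}(T)\|_{L^p(w)\to L^p(w)}\le C_{n,m,p,k,T}\big([w]_{A_{\mathcal{F},p}},[w]_{A_{\mathcal{F},p}}\big)\prod_{j=1}^k\|b_j\|_{\text{bmo}_{\mathcal{F}}},
\]
and since $w$ is fixed and $[w]_{A_{\mathcal{F},p}}<\infty$, the first factor can be absorbed into a constant $C_{n,m,p,k,w,T}$, giving the claimed bound. The only point requiring a moment's thought is the monotone dependence of the constant in Theorem~\ref{borrow} on $[w]_{A_{\mathcal{F},p}}$: since no explicit quantitative version is recorded there, one either invokes the standard fact that such weighted norm inequalities are increasing in the weight characteristic, or, because $w$ is fixed throughout, simply observes that finiteness of $[w]_{A_{\mathcal{F},p}}$ is all that is needed to feed Theorem~\ref{bootstrap}. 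Beyond this bookkeeping, there is no substantial obstacle; all of the analytic content has already been carried out in the proofs of Theorems~\ref{bootstrap} and~\ref{borrow}.
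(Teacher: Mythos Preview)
Your proposal is correct and matches the paper's own argument exactly: the corollary is obtained immediately by applying Theorem~\ref{bootstrap} with the choice $\mu=\lambda=w$ together with Theorem~\ref{borrow}. Your additional remark about the monotone dependence of the constant on $[w]_{A_{\mathcal{F},p}}$ is a reasonable clarification, though the paper does not pause on this point.
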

Obviously, the result above in the first order unweighted case is precisely the desired upper bound estimate in Theorem \ref{thm big com}.

The core of the proof of Theorem \ref{bootstrap} lies in a complex function representation of the commutators and the Cauchy integral formula. This method has been widely used to obtain upper estimates for linear and multilinear commutators in various settings, see \cite{CPP, crw, Hy, BMMST, KO} for examples. The main new challenge in our problem is the unique structure of the little flag BMO space and flag weights, which for instance doesn't seem to fall into the category of spaces recently studied in \cite{BMMST}. 
\begin{proof}[Proof of Theorem \ref{bootstrap}]
Observe that
\[
C^k_{\vec{b}}(T)=\partial_{z_1}\cdots\partial_{z_k}F(\vec{0}),\qquad F(\vec{z}):=e^{\sum_{j=1}^k b_1z_1}Te^{-\sum_{j=1}^k b_jz_j},
\]which generalizes a classical formula representing higher order commutators. We remark that when all the symbol functions $b_j$ are the same, one can work instead with a simpler formula using single variable complex functions and their $k$-th order derivatives. 
According to the Cauchy integral formula on polydiscs,
\[
C_{\vec{b}}^k(T)=\frac{1}{(2\pi i)^k}\oint \cdots\oint \frac{F(\vec{z})\,d z_1\cdots d z_k}{z_1^2\cdots z_k^2}
\]where each integral is over any closed path around the origin in the corresponding variable. For fixed $(\delta_1,\ldots,\delta_k)$ which will be determined later, there holds by Minkowski inequality that
\[
\begin{split}
&\|C^k_{\vec{b}}(T)\|_{L^p(\mu)\to L^p(\lambda)}\\
\leq&\frac{1}{(2\pi)^k}\oint _{|z_1|=\delta_1}\cdots\oint_{|z_k|=\delta_k}\|T\|_{L^p\left(e^{p\operatorname{Re}(\sum_{j=1}^kb_jz_j)}\mu\right)\to L^p\left(e^{p\operatorname{Re}(\sum_{j=1}^kb_jz_j)}\lambda\right)}\frac{|\d z_1|\cdots |\d z_k|}{\delta_1^2\cdots \delta_k^2}\\
\leq&\frac{1}{(2\pi)^k}\oint _{|z_1|=\delta_1}\cdots\oint_{|z_k|=\delta_k}C_{n,m,p,T}\left([e^{p\operatorname{Re}(\sum_{j=1}^kb_jz_j)}\mu]_{A_{\mathcal{F},p}},[e^{p\operatorname{Re}(\sum_{j=1}^kb_jz_j)}\lambda]_{A_{\mathcal{F},p}}\right)\frac{|\d z_1|\cdots |\d z_k|}{\delta_1^2\cdots\delta_k^2},
\end{split}
\]where we have used the fact that $(e^{p\operatorname{Re}(\sum_{j=1}^kb_jz_j)}\mu, e^{p\operatorname{Re}(\sum_{j=1}^kb_jz_j)}\lambda)$ is a pair of weights satisfying
\[
\frac{e^{p\operatorname{Re}(\sum_{j=1}^kb_jz_j)}\mu}{e^{p\operatorname{Re}(\sum_{j=1}^kb_jz_j)}\lambda}=\frac{\mu}{\lambda}=\nu^p.
\]

Now we choose $\{\delta_j\}$ according to Lemma \ref{lem} below, which is the key ingredient of the proof concerning the relation between $A_{\mathcal{F},p}$ weights and little flag BMO functions. Let
\[
\delta_1:=\frac{\epsilon_{n,m,p}}{\max\left((\mu)_{A_{\mathcal{F},p}},(\lambda)_{A_{\mathcal{F},p}}\right)\|b_1\|_{\text{bmo}_{\mathcal{F}}}},
\]where for any $w\in A_{\mathcal{F},p}$
\begin{align}\label{(w)}
(w)_{A_{\mathcal{F},p}}:=\max\left([w]_{A_{\mathcal{F},p}}, [\sigma]_{A_{\mathcal{F},p'}}\right).
\end{align}
Here we have used the notation $\sigma:=w^{1-p'}$ to denote the dual weight of $w$, and the relevant property of $(w)_{A_{\mathcal{F},p}}$ to us is that
\[
(w)_{A_{\mathcal{F},p}}=\max([w]_{A_{\mathcal{F},p}},[w]_{A_{\mathcal{F},p}}^{p'-1})=[w]_{A_{\mathcal{F},p}}^{\max(1,p'-1)}.
\]Recursively, for any $j\geq 2$, choose
\[
\delta_j:=\frac{\epsilon_{n,m,p}}{\sup_{\{z_t\}:\,|z_1|=\delta_1,\ldots,|z_{j-1}|=\delta_{j-1}}\max\left(\big(e^{p\operatorname{Re}(\sum_{t=1}^{j-1}b_tz_t)}\mu\big)_{A_{\mathcal{F},p}},\big(e^{p\operatorname{Re}(\sum_{t=1}^{j-1}b_tz_t)}\lambda\big)_{A_{\mathcal{F},p}}\right)\|b_j\|_{\text{bmo}_{\mathcal{F}}}}.
\]
Then applying Lemma \ref{lem} iteratively shows that
\[
[e^{p\operatorname{Re}(\sum_{j=1}^kb_jz_j)}\mu]_{A_{\mathcal{F},p}}\leq C_{n,m,p}[e^{p\operatorname{Re}(\sum_{j=1}^{k-1}b_jz_j)}\mu]_{A_{\mathcal{F},p}}\leq \cdots\leq C_{n,m,p}^k[\mu]_{A_{\mathcal{F},p}},
\]and similarly
\[
[e^{p\operatorname{Re}(\sum_{j=1}^kb_jz_j)}\lambda]_{A_{\mathcal{F},p}}\leq C_{n,m,p}^k[\lambda]_{A_{\mathcal{F},p}},
\]which in turn via the monotonicity of $C_{n,m,p,T}(\cdot,\cdot)$ leads to
\[
C_{n,m,p,T}\left([e^{p\operatorname{Re}(\sum_{j=1}^kb_jz_j)}\mu]_{A_{\mathcal{F},p}},[e^{p\operatorname{Re}(\sum_{j=1}^kb_jz_j)}\lambda]_{A_{\mathcal{F},p}}\right)\leq C'_{n,m,p,k,T}\left([\mu]_{A_{\mathcal{F},p}},[\lambda]_{A_{\mathcal{F},p}}\right).
\]Therefore,
\[
\begin{split}
\|C^k_{\vec{b}}(T)\|_{L^p(\mu)\to L^p(\lambda)}\leq& \frac{1}{\delta_1\cdots\delta_k}C'_{n,m,p,k,T}\left([\mu]_{A_{\mathcal{F},p}},[\lambda]_{A_{\mathcal{F},p}}\right)\\
\leq& C_{n,m,p,k,T}\left([\mu]_{A_{\mathcal{F},p}},[\lambda]_{A_{\mathcal{F},p}}\right)\prod_{j=1}^k\|b_j\|_{\text{bmo}_{\mathcal{F}}}.
\end{split}
\]The proof is thus complete.
\end{proof}
\begin{lem}\label{lem}
Let $w\in A_{\mathcal{F},p}$, $1<p<\infty$, and $b\in\text{bmo}_{\mathcal{F}}(\mathbb{R}^n\times\mathbb{R}^m)$. There are constants $\epsilon_{n,m,p}, C_{n,m,p}>0$ such that
\[
[e^{\operatorname{Re}(bz)}w]_{A_{\mathcal{F},p}}\leq C_{n,m,p}[w]_{A_{\mathcal{F},p}}
\]whenever $z\in\mathbb{C}$ satisfies
\[
|z|\leq\frac{\epsilon_{n,m,p}}{\|b\|_{\text{bmo}_{\mathcal{F}}}(w)_{A_{\mathcal{F},p}}},
\]
where $(w)_{A_{\mathcal{F},p}}$ is defined as in \eqref{(w)}.     %$(w)_{A_p}:=\max([w]_{A_\infty},[w^{1-p'}]_{A_\infty})$.
\end{lem}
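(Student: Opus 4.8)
The plan is to reduce the flag estimate to its classical one-parameter counterpart, exploiting Wu's identification $A_{\mathcal{F},p}=A_p\cap A_p^{(2)}$ (equation \eqref{Apintersect}) together with the analogous decomposition of $\text{bmo}_{\mathcal{F}}$ from Lemma \ref{bmointersect}. First I would record the one-parameter perturbation lemma I intend to invoke: for a fixed Euclidean space $\R^N$, if $v\in A_p(\R^N)$ and $\beta\in{\rm BMO}(\R^N)$, then $[e^{\operatorname{Re}(\beta z)}v]_{A_p(\R^N)}\le 4^p[v]_{A_p(\R^N)}$ whenever $|z|\le c_{N,p}/\big(\|\beta\|_{{\rm BMO}(\R^N)}(v)_{A_p(\R^N)}\big)$, where $(v)_{A_p}:=\max\big([v]_{A_p},[v^{1-p'}]_{A_{p'}}\big)$. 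This is classical (see e.g. \cite{CPP,Hy}); its proof runs as follows. By the quantitative reverse H\"older inequality for $A_\infty$ weights one obtains exponents $r,s>1$ with $\max(r',s')\lesssim_{N,p}(v)_{A_p}$ such that $\langle v^r\rangle_Q^{1/r}\le 2\langle v\rangle_Q$ and $\langle\sigma^s\rangle_Q^{1/s}\le 2\langle\sigma\rangle_Q$ for every cube $Q$, where $\sigma:=v^{1-p'}$. Writing $u:=\operatorname{Re}(\beta z)$, H\"older's inequality gives $\langle e^u v\rangle_Q\le\langle e^{r'u}\rangle_Q^{1/r'}\langle v^r\rangle_Q^{1/r}$ and $\langle e^{-u/(p-1)}\sigma\rangle_Q\le\langle e^{-s'u/(p-1)}\rangle_Q^{1/s'}\langle\sigma^s\rangle_Q^{1/s}$; after pulling out the constant-on-$Q$ factors $e^{\operatorname{Re}(z\avg{\beta}_Q)}$ and $e^{-\operatorname{Re}(z\avg{\beta}_Q)/(p-1)}$ respectively, the John--Nirenberg inequality bounds the remaining exponential averages by $2$ once $|z|$ is as small as stated. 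Forming the product $\langle e^u v\rangle_Q\langle e^{-u/(p-1)}\sigma\rangle_Q^{p-1}$, the two $\avg{\beta}_Q$-exponentials cancel exactly, leaving $\le 4^p[v]_{A_p}$, which is the claim since $(e^u v)^{1-p'}=e^{-u/(p-1)}\sigma$.

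With this in hand I would set $u:=\operatorname{Re}(bz)$ and observe that $(e^u w)^{1-p'}=e^{-u/(p-1)}w^{1-p'}$, while for a.e. fixed $x\in\R^n$ the $x$-slice of $e^u w$ equals $e^{u(x,\cdot)}w(x,\cdot)$ with $u(x,\cdot)=\operatorname{Re}(b(x,\cdot)z)$. By Lemma \ref{bmointersect}, $b\in{\rm BMO}(\R^{n+m})$ and $b(x,\cdot)\in{\rm BMO}(\R^m)$ uniformly in $x$, with both norms at most $C_{n,m}\|b\|_{\text{bmo}_{\mathcal{F}}}$. By Wu's characterization applied both to $w$ and to its dual weight $w^{1-p'}\in A_{\mathcal{F},p'}$ (using $[w^{1-p'}]_{A_{\mathcal{F},p'}}=[w]_{A_{\mathcal{F},p}}^{p'-1}$, recorded in the proof of Proposition \ref{explog}), one has $w\in A_p(\R^{n+m})$ and $w(x,\cdot)\in A_p(\R^m)$ uniformly in $x$, with $(w)_{A_p(\R^{n+m})}$ and $\sup_x (w(x,\cdot))_{A_p(\R^m)}$ both at most $C_{n,m,p}(w)_{A_{\mathcal{F},p}}$. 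Hence, for $\epsilon_{n,m,p}$ chosen small enough, the hypothesis $|z|\le\epsilon_{n,m,p}/\big(\|b\|_{\text{bmo}_{\mathcal{F}}}(w)_{A_{\mathcal{F},p}}\big)$ forces the smallness condition of the one-parameter lemma both on $\R^{n+m}$ (with $\beta=b$, $v=w$) and, uniformly in $x$, on each slice $\R^m$ (with $\beta=b(x,\cdot)$, $v=w(x,\cdot)$). Applying that lemma in both situations yields
\[
[e^u w]_{A_p(\R^{n+m})}\le 4^p[w]_{A_p(\R^{n+m})},\qquad \sup_x[e^{u(x,\cdot)}w(x,\cdot)]_{A_p(\R^m)}\le 4^p\sup_x[w(x,\cdot)]_{A_p(\R^m)}.
\]
Feeding these two bounds back through Wu's characterization --- in the direction controlling $[\,v\,]_{A_{\mathcal{F},p}}$ by $C_{n,m,p}\max\big([v]_{A_p(\R^{n+m})},\sup_x[v(x,\cdot)]_{A_p(\R^m)}\big)$ for an arbitrary weight $v$ --- gives $[e^u w]_{A_{\mathcal{F},p}}\le C_{n,m,p}[w]_{A_{\mathcal{F},p}}$, as desired.

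The step I expect to cost the most care is making Wu's set identity \eqref{Apintersect} quantitative, i.e. establishing $[w]_{A_{\mathcal{F},p}}\approx_{n,m,p}\max\big([w]_{A_p(\R^{n+m})},\sup_x[w(x,\cdot)]_{A_p(\R^m)}\big)$ with implied constants depending only on $n,m,p$. The inequality ``$\gtrsim$'' is elementary: axis-parallel cubes of $\R^{n+m}$ are flag rectangles, and flag rectangles $I_k\times J$ with $\ell(I_k)\to 0$ recover the $\R^m$-slice condition via the Lebesgue differentiation theorem --- precisely the arguments already used in the proof of Lemma \ref{bmointersect}. The reverse inequality is the substantive content of \cite{W}, and one must check that Wu's proof, being assembled from one-parameter reverse-H\"older / $A_\infty$ arguments, indeed produces constants depending only on $n,m,p$. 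A subsidiary point is the uniformity in $x$ of the reverse-H\"older exponents appearing in the slice estimates; this is guaranteed because $\sup_x(w(x,\cdot))_{A_p(\R^m)}\le C_{n,m,p}(w)_{A_{\mathcal{F},p}}$, so those exponents can be chosen independently of $x$. Everything else is routine bookkeeping of constants.
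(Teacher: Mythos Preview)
Your proof is correct and follows essentially the same route as the paper: both reduce the flag estimate to the one-parameter result of Hyt\"onen \cite{Hy} via Wu's characterization \eqref{Apintersect} and Lemma \ref{bmointersect}, applying the one-parameter lemma on $\R^{n+m}$ and on each slice $\R^m$, then recombining. Your write-up is in fact more careful than the paper's on two points: you sketch the one-parameter perturbation argument explicitly, and you correctly flag that the final recombination step requires the \emph{quantitative} direction $[v]_{A_{\mathcal{F},p}}\lesssim_{n,m,p}\max\big([v]_{A_p(\R^{n+m})},\sup_x[v(x,\cdot)]_{A_p(\R^m)}\big)$ of Wu's identity, which the paper uses without comment.
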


\begin{proof}
This estimate is a consequence of (\ref{Apintersect}), Lemma \ref{bmointersect} and a one-parameter version proven by Hyt\"onen in \cite{Hy}, which states that for any $w\in A_p$, the classical Muckenhoupt $A_p$ class on $\mathbb{R}^d$, $1<p<\infty$, there exist $\epsilon_{d,p}, C_{d,p}>0$ such that 
\[
[e^{\operatorname{Re}(bz)}w]_{A_p}\leq C_{d,p}[w]_{A_p}
\]for all $z\in\mathbb{C}$ with  
\[
|z|\leq\frac{\epsilon_{n,p}}{\|b\|_{\text{BMO}}(w)_{A_p}}.
\]

To see this, by (\ref{Apintersect}) and Lemma \ref{bmointersect}, given $w\in A_{\mathcal{F},p}$ and $b\in \text{bmo}_{\mathcal{F}}$, there hold $w\in A_p\cap A_p^{(2)}$ and $b\in \text{BMO}(\R^{n+m})\cap \text{BMO}^{(2)}(\R^m)$. Hence, taking $\epsilon_{n,m,p}>0$ sufficiently small, for all $z\in\mathbb{C}$ satisfying
\[
|z|\leq\frac{\epsilon_{n,m,p}}{\|b\|_{\text{bmo}_{\mathcal{F}}}(w)_{A_{\mathcal{F},p}}},
\]one has
\[
[e^{\operatorname{Re}(bz)}w]_{A_p}\leq C_{n+m,p}[w]_{A_p}\leq C_{n,m,p}[w]_{A_{\mathcal{F},p}}
\]and
\[
[e^{\operatorname{Re}(b(x,\cdot)z)}w(x,\cdot)]_{A_p}\leq C_{m,p}[w(x,\cdot)]_{A_p}\leq C_{n,m,p}[w]_{A_{\mathcal{F},p}},\qquad \text{a.e.}\, x\in \R^n,
\]by observing that $$\|b\|_{\text{bmo}_{\mathcal{F}}}\gtrsim \max\big(\|b\|_{\text{BMO}(\R^{n+m})},\sup_{x\in\R^n}\|b(x,\cdot)\|_{\text{BMO}^{(2)}(\R^m)}\big)$$ and that 
$$(w)_{A_{\mathcal{F},p}}\gtrsim \max([w]_{A_p},\sup_{x\in\R^n}[w(x,\cdot)]_{A_p}).$$ The proof is thus complete.
\end{proof}

\section{Applications: div-curl lemmas in the flag setting}\label{s5}

Let $E^{(1)}$ be a vector field on $\R^{n+m}$ taking the values in $\R^{n+m}$, and let
$E^{(2)}$ be a vector field on $\R^{m}$ taking the values in $\R^{m}$. 
Now let $\mathcal M_{{n+m},m}$ denote the set of all $(n+m) \times m$ matrices.
We now consider the following
version of vector fields on $\R^{n}\times\R^m$ taking the values in $\mathcal M_{{n+m},m}$, associated  with the flag structure:
\begin{align}\label{flag vector field}
E=E^{(1)}\ast_2 E^{(2)}:=
\begin{bmatrix}
    E^{(1)}_{1}\ast_2 E^{(2)}_1        & \dots & E^{(1)}_{1}\ast_2 E^{(2)}_m \\
        \vdots    & \dots &  \vdots \\
        \vdots    & \dots &  \vdots \\
         \vdots    & \dots &  \vdots \\
   E^{(1)}_{n+m}\ast_2 E^{(2)}_1        & \dots & E^{(1)}_{n+m}\ast_2 E^{(2)}_m
\end{bmatrix},
\end{align}
where $$E^{(1)}_{j}\ast_2 E^{(2)}_k(x,y) =\int_{\R^m} E^{(1)}_{j}(x,y-z) E^{(2)}_k(z)\,dz.  $$

Next we consider the following $L^p$ space via projections. Suppose $1<p<\infty$. We define
$L^p_{\mathcal{F}}(\R^n\times\R^m; \mathcal M_{{n+m},m})$ to be the set of vector fields $E$ 
in $L^p(\R^n\times\R^m; \mathcal M_{{n+m},m})$ such that there exist $r_1,r_2\in(1,\infty)$ with ${1\over r_1}+{1\over r_2}={1\over p}+1$, 
$E^{(1)}\in L^{r_1}(\R^{n+m};\R^{n+m})$, $E^{(2)}\in L^{r_2}(\R^{m};\R^{m})$ and that
$E=E^{(1)}\ast_2 E^{(2)}$, moreover,
$$ \|E\|_{L^p_{\mathcal{F}}(\R^n\times\R^m; \mathcal M_{{n+m},m})} :=\inf\|E^{(1)}\|_{L^{r_1}(\R^{n+m};\R^{n+m})}\|E^{(2)}\|_{L^{r_2}(\R^{m};\R^{m})},$$
where the infimum is taken over all possible $r_1,r_2\in(1,\infty),E^{(1)}\in L^{r_1}(\R^{n+m};\R^{n+m}), E^{(2)}\in L^{r_2}(\R^{m};\R^{m})$.

Given two matrices $A, B \in \mathcal M_{{n+m},m}$, we define the ``dot product'' between $A$ and $B$ by
$$ A\cdot B = \sum_{j=1}^{n+m}\sum_{k=1}^m A_{j,k}B_{j,k}. $$
We point out that this is the Hilbert-Schmidt inner product for two matrices and more generally this is referred to as the Schur product of two matrices.

We now prove Theorem \ref{thm divcurl1}.
\begin{proof}[Proof of Theorem \ref{thm divcurl1}]
Note that $B$  is a vector field on $\R^{n}\times\R^m$ taking the values in $\mathcal M_{{n+m},m}$, associated  with the flag structure \eqref{flag vector field}. Then there exist certain vector field $B^{(1)}$  on $\R^{n+m}$ taking the values in $\R^{n+m}$ and   vector field $B^{(2)}$  on $\R^{m}$ taking the values in $\R^{m}$ such that
$B=B^{(1)}\ast_2 B^{(2)}$ and that 
$$ \|B\|_{L^q_{\mathcal{F}}(\R^n\times\R^m; \mathcal M_{{n+m},m})} \approx \inf\|B^{(1)}\|_{L^{q_1}(\R^{n+m};\R^{n+m})}\|B^{(2)}\|_{L^{q_2}(\R^{m};\R^{m})}$$
with ${1\over q_1}+{1\over q_2}={1\over q}+1$.

Thus, $\operatorname{curl}_{(x,y)} B^{(1)}=0$ implies that there exists $\phi^{(1)}\in L^q(\R^{n+m})$ such that
$$ B^{(1)} = ( R^{(1)}_1\phi^{(1)},\ldots,R^{(1)}_{n+m}\phi^{(1)}  )$$
with $\|B^{(1)}\|_{L^{q_1}(\R^{n+m};\R^{n+m})}\approx \|\phi^{(1)}\|_{L^{q_1}(\R^{n+m})}$.
Again, $\operatorname{curl}_{y} B^{(2)}=0$ implies that there exists $\phi^{(2)}\in L^{q_2}(\R^{n+m})$ such that
$$ B^{(2)} = ( R^{(2)}_1\phi^{(2)},\ldots,R^{(2)}_{m}\phi^{(2)}  )$$
with $\|B^{(2)}\|_{L^{q_2}(\R^{m};\R^{m})}\approx \|\phi^{(2)}\|_{L^{q_2}(\R^{m})}$. 
As a consequence we get that the matrix $B$ has elements
$$B_{j,k}= R_{j,k} \ast \phi,\quad j=1,\ldots,n+m,\ k=1,\ldots,m, $$
where $\phi = \phi^{(1)}\ast_2 \phi^{(2)}$ and 
$\|B\|_{L^q_{\mathcal{F}}(\R^n\times\R^m; \mathcal M_{{n+m},m})} \approx \|\phi\|_{L^q(\R^{n+m})}$.

Similarly, note that $E$  is a vector field on $\R^{n}\times\R^m$ taking the values in $\mathcal M_{{n+m},m}$, associated  with the flag structure \eqref{flag vector field}. Then there exist certain vector field $E^{(1)}$  on $\R^{n+m}$ taking the values in $\R^{n+m}$ and   vector field $E^{(2)}$  on $\R^{m}$ taking the values in $\R^{m}$ such that
$E=E^{(1)}\ast_2 E^{(2)}$ and that 
$$ \|E\|_{L^p_{\mathcal{F}}(\R^n\times\R^m; \mathcal M_{{n+m},m})} \approx \inf\|E^{(1)}\|_{L^{p_1}(\R^{n+m};\R^{n+m})}\|E^{(2)}\|_{L^{p_2}(\R^{m};\R^{m})}$$
with ${1\over p_1}+{1\over p_2}={1\over p}+1$.

Thus, the conditions  $\operatorname{div}_{(x,y)} E^{(1)}=0$ and 
 $\operatorname{div}_{y} E^{(2)}=0$ imply that
$$\sum_{j=1}^{n+m} R^{(1)}_j\ast E^{(1)}_j(x,y)=0\quad{\rm and}\quad  \sum_{k=1}^{m} R^{(2)}_k\ast_2 E^{(2)}_k(y)=0. $$
Hence, we get that
$$ \sum_{j=1}^{n+m}  R^{(1)}_j\ast E_{j,k}(x,y)=0\quad{\rm and}\quad   \sum_{k=1}^{m} R^{(2)}_k\ast_2E_{j,k}(x,y)=0. $$
With these facts, we have that
\begin{align*}
E(x,y)\cdot B(x,y)& =  \sum_{j=1}^{n+m}  \sum_{k=1}^{m} E_{j,k}(x,y)B_{j,k}(x,y)= \sum_{j=1}^{n+m}  \sum_{k=1}^{m} E_{j,k}(x,y)R_{j,k} \ast \phi(x,y)\\
&=\sum_{j=1}^{n+m}  \sum_{k=1}^{m}\bigg\{E_{j,k}(x,y)R_{j,k} \ast \phi(x,y) + R^{(1)}_j\ast E_{j,k}(x,y) R^{(2)}_k \ast_2 \phi(x,y)\\
&\hskip2cm + R^{(2)}_k\ast_2 E_{j,k}(x,y) R^{(1)}_j\ast \phi(x,y)  + R_{j,k} \ast E_{j,k}(x,y) \phi(x,y)  \bigg\}
\end{align*}
Now testing this equality over all functions in the flag BMO space, i.e., for every $b\in {\rm BMO}_{\mathcal{F}}(\R^n\times\R^m)$, 
and then unravelling the expression with the Riesz transforms we see that
\begin{align*}
&\int_{\R^n\times \R^m} E(x,y)\cdot B(x,y)\, b(x,y)\, dxdy\\
&=\sum_{j=1}^{n+m}  \sum_{k=1}^{m}\int_{\R^n\times \R^m}  \big[[b,R^{(1)}_j], R^{(2)}_k\big]_2(E_{j,k})(x,y) \phi(x,y)\, dxdy.
\end{align*}
Then based on Theorem \ref{thm iterated com}, since $b\in {\rm BMO}_{\mathcal{F}}(\R^n\times\R^m)$ we have that each of the above commutators is a bounded operator on $L^p(\R^n \times \R^m)$ with norm controlled by the norm of $b$, i.e.,
$\|b\|_{ {\rm BMO}_{\mathcal{F}}(\R^n\times\R^m)}.$

As a consequence, we get that 
\begin{align*}
&\bigg|\int_{\R^n\times \R^m} E(x,y)\cdot B(x,y)\, b(x,y)\, dxdy\bigg|\\
&\ls  \|b\|_{ {\rm BMO}_{\mathcal{F}}(\R^n\times\R^m)}\|E\|_{L^p_{\mathcal{F}}(\R^n\times\R^m; \mathcal M_{{n+m},m})} \|\phi\|_{L^q(\R^{n+m})}\\
&\ls  \|b\|_{ {\rm BMO}_{\mathcal{F}}(\R^n\times\R^m)}\|E\|_{L^p_{\mathcal{F}}(\R^n\times\R^m; \mathcal M_{{n+m},m})} \|B\|_{L^q_{\mathcal{F}}(\R^n\times\R^m; \mathcal M_{{n+m},m})}.
\end{align*}
Then from the duality of $H^1_{\mathcal{F}}(\R^n\times\R^m)$ with ${\rm BMO}_{\mathcal{F}}(\R^n\times\R^m)$, we obtain that
\begin{align*}
\|E\cdot B\|_{H^1_{\mathcal{F}}(\R^n\times\R^m)}\ls  \|b\|_{ {\rm BMO}_{\mathcal{F}}(\R^n\times\R^m)}\|E\|_{L^p_{\mathcal{F}}(\R^n\times\R^m; \mathcal M_{{n+m},m})} \|B\|_{L^q_{\mathcal{F}}(\R^n\times\R^m; \mathcal M_{{n+m},m})}.
\end{align*}
This finishes the proof of Theorem \ref{thm divcurl1}.
\end{proof}

\begin{proof}[Proof of Theorem \ref{thm divcurl2}]
Suppose that 
$E,B$ are vector fields on $\R^{n}\times\R^m$ taking values in $\R^{n+m}$. Moreover, suppose
$E\in L^p(\R^{n}\times\R^m; \R^{n+m})$ and 
$B\in L^q(\R^{n}\times\R^m; \R^{n+m})$ satisfy that
$$ \operatorname{div}_{(x,y)} E(x,y)=0\quad{\rm and}\quad \operatorname{curl}_{(x,y)} B(x,y)=0 $$
and
$$ \operatorname{div}_{y} E(x,y)=0\quad{\rm and}\quad \operatorname{curl}_{y} B(x,y)=0  ,\quad \forall x\in\R^n.$$

%First note that
% $curl_{(x,y)} B=0$ implies that there exists $\phi\in L^q(\R^{n+m})$ such that
%$$ B = ( R^{(1)}_1\phi,\ldots,R^{(1)}_{n+m}\phi  )$$
%with $\|B\|_{L^{q}(\R^{n+m};\R^{n+m})}\approx \|\phi\|_{L^{q}(\R^{n+m})}$.

We now define the projection operator $\mathcal P$ as
$$ \mathcal P E =\bigg( E_1+ R^{(1)}_1\Big( \sum_{k=1}^{n+m}R^{(1)}_kE_k \Big),\ldots, E_{n+m}+ R^{(1)}_{n+m}\Big( \sum_{k=1}^{n+m}R^{(1)}_kE_k \Big) \bigg). $$
Then by definition, it is direct that
$$ \operatorname{div}_{(x,y)} \mathcal P E  =0 $$
since
\begin{align}\label{ee1}
\sum_{j=1}^{n+m} R^{(1)}_j\bigg( E_j+ R^{(1)}_j\bigg(\sum_{k=1}^{n+m} R^{(1)}_k E_k \bigg) \bigg)=0.
\end{align}

Moreover, we also have $ \mathcal P \circ \mathcal P E =  \mathcal P E$. Next, we point out that
applying $[b,\mathcal P]$ to the vector field $E$, we can get that the $j$th component is given by
$$ \sum_{k=1}^{n+m} [ b,  R^{(1)}_jR^{(1)}_k](E_k). $$

Suppose now $b\in {\rm bmo}_{\mathcal{F}}(\R^n\times \R^m)$. Then from Lemma \ref{bmointersect} we know that
$$\text{bmo}_{\mathcal{F}}(\mathbb{R}^{n+m})= \text{BMO}(\R^{n+m})\cap \text{BMO}^{(2)}(\mathbb{R}^m)$$
with comparable norms. Hence, we have that $b\in  \text{BMO}(\R^{n+m})$ with 
$$\|b\|_{ \text{BMO}(\R^{n+m})} \ls \|b\|_{ {\rm bmo}_{\mathcal{F}}(\R^n\times \R^m)}.$$
With all these observations,  an application of the Coifman, Rochberg and Weiss theorem demonstrates
that $[b,\mathcal P](E)$ is bounded on $L^p(\R^{n}\times\R^m; \R^{n+m})$ with 
\begin{align*} 
\|[b,\mathcal P](E)\|_{L^p(\R^{n}\times\R^m; \R^{n+m})}&\ls \|b\|_{ \text{BMO}(\R^{n+m})}\|E\|_{L^p(\R^{n}\times\R^m; \R^{n+m})}\\
&\ls \|b\|_{ {\rm bmo}_{\mathcal{F}}(\R^n\times \R^m)}\|E\|_{L^p(\R^{n}\times\R^m; \R^{n+m})}. 
\end{align*}
As a consequence, from the definition of $[b,P]$ and \eqref{ee1} we get that
\begin{align*}
\bigg| \int_{\R^{n+m}} E(x,y)\cdot B(x,y)\, b(x,y)\,dxdy \bigg| &= \bigg| \int_{\R^{n+m}} [b, P]E(x,y)\cdot B(x,y)\,dxdy \bigg|\\
&\ls \|b\|_{ \text{BMO}(\R^{n+m})}\|E\|_{L^p(\R^{n}\times\R^m; \R^{n+m})} \|B\|_{L^q(\R^{n}\times\R^m; \R^{n+m})}\\
&\ls \|b\|_{ {\rm bmo}_{\mathcal{F}}(\R^n\times \R^m)} \|E\|_{L^p(\R^{n}\times\R^m; \R^{n+m})}\|B\|_{L^q(\R^{n}\times\R^m; \R^{n+m})}.
\end{align*}
Thus we get that $E\cdot B$ is in $H^1(\R^{n+m})$ with 
$$ \|E\cdot B\|_{H^1(\R^{n+m})}\ls  \|E\|_{L^p(\R^{n}\times\R^m; \R^{n+m})}\|B\|_{L^q(\R^{n}\times\R^m; \R^{n+m})}.$$

To show the second result, we now define the projection operator $\mathcal P^{(2)}$ as
$$ \mathcal P^{(2)} E =\bigg( E_{n+1}+ R^{(2)}_1\Big( \sum_{k=1}^{m}R^{(2)}_kE_{n+k} \Big),\ldots, E_{n+m}+ R^{(1)}_{n+m}\Big( \sum_{k=1}^{m}R^{(1)}_kE_{n+k} \Big) \bigg). $$
Then, again, by definition, we have that
$$ \operatorname{div}_{y} \mathcal P^{(2)} E  =0 $$
since
\begin{align}\label{ee2}
\sum_{j=1}^{m} R^{(2)}_j\bigg( E_{n+j}+ R^{(2)}_j\bigg(\sum_{k=1}^{m} R^{(2)}_k E_{n+k} \bigg) \bigg)=0.
\end{align}
%Next, note that 
% $curl_{y} B=0$ implies that there exists $\psi\in L^q(\R^{m})$ such that
%$$ (B_{n+1},\ldots,B_{n+m}) = ( R^{(2)}_1\psi,\ldots,R^{(2)}_{m}\psi  )$$
%with $  \Big\| \Big\{\sum_{k=1}^m |B_{n+k}|^2\Big\}^{1/2} \Big\|_{L^q(\R^m)} \approx \|\psi\|_{L^{q}(\R^{m})}$.

Now fix $x\in\R^n$, by using the definition of  $\mathcal P^{(2)}$ and the fact  \eqref{ee2}, we get that for 
$b\in {\rm bmo}_{\mathcal{F}}(\R^n\times \R^m)$,
\begin{align*}
\int_{\R^m} E(x,y)\cdot_2 B(x,y) b(x,y)\,dy = \int_{\R^m} [b(x,\cdot),\mathcal P^{(2)}]E(x,y)  \psi(x,y) \,dy.
\end{align*}
Integrating the above equality over $\R^n$, we have
\begin{align*}
&\hskip-.5cm\bigg|\int_{\R^n}\int_{\R^m} E(x,y)\cdot_2 B(x,y) b(x,y)\,dydx \bigg|\\
&=\bigg| \int_{\R^n}\int_{\R^m} [b(x,\cdot),\mathcal P^{(2)}]E(x,y)  \cdot_2 B(x,y)  \,dydx\bigg|\\
&\ls  \int_{\R^n} \|b(x,\cdot)\|_{{\rm BMO}(\R^n)} \|E(x,\cdot)\|_{L^p(\R^m)} \| B(x,\cdot)\|_{L^q(\R^m)} \,dx\\
&\ls \|b\|_{{\rm bmo}_{\mathcal{F}}(\R^n\times \R^m)} \int_{\R^n}  \|E(x,\cdot)\|_{L^p(\R^m)} \| B(x,\cdot)\|_{L^q(\R^m)} \,dx\\
&\ls \|b\|_{{\rm bmo}_{\mathcal{F}}(\R^n\times \R^m)}  \|E\|_{L^p(\R^m\times\R^n;\R^{n+m})} \| B\|_{L^p(\R^m\times\R^n;\R^{n+m})}. 
\end{align*}
Here we use again Lemma \ref{bmointersect}  and H\"older's inequality.  Taking the supremum over all $b\in {\rm bmo}_{\mathcal{F}}(\R^n\times \R^m)$ we obtain that
\begin{align*}
\int_{\R^m}\|E(\cdot,y)\cdot_2 B(\cdot,y)\|_{H^1(\R^{m})}\,dy \ls \|E\|_{L^p(\R^{n}\times\R^m; \R^{n+m})}
\|B\|_{L^q(\R^{n}\times\R^m; \R^{n+m})}.
\end{align*}
This finishes the proof of Theorem \ref{thm divcurl2}.
\end{proof}

\medskip

{\bf Acknowledgments:}  
 X. T. Duong, J. Li and J. Pipher  are supported by ARC DP 160100153.  X. T. Duong and J. Li are also supported by
Macquarie University Research Seeding Grant. 
B. D. Wick's research is supported in part by National Science Foundation grant 
DMS \#1560955. This paper started in July 2016 during J. Li's visiting J. Pipher at Brown University. J. Li would like to thank the Department of Mathematics of Brown University for its hospitality. 
Part of this material is based upon work supported by the National Science Foundation under Grant No. DMS-1440140 while Y. Ou and J. Pipher were in residence at the Mathematical Sciences Research Institute in Berkeley, California, during the Spring 2017 semester.

\medskip

{\small
\medskip

%Xuan Thinh Duong

\smallskip

X. Duong, Department of Mathematics, Macquarie University, NSW, 2109, Australia.

\smallskip

{\it E-mail}: \texttt{xuan.duong@mq.edu.au}

\vspace{0.3cm}

%%Ji Li
%
%%\smallskip
%
J. Li, Department of Mathematics, Macquarie University, NSW, 2109, Australia.

\smallskip

{\it E-mail}: \texttt{ji.li@mq.edu.au}

\vspace{0.3cm}

Y. Ou, Department of Mathematics, Baruch College, CUNY, New York, NY 10025, USA

\smallskip

{\it E-mail}: \texttt{yumeng.ou@baruch.cuny.edu}

\vspace{0.3cm}
%
%%Brett D. Wick
%
%%\smallskip
%
J. Pipher, Department of Mathematics, Brown University, Providence,  RI 02912, USA

\smallskip

{\it E-mail}: \texttt{jpipher@math.brown.edu}

\vspace{0.3cm}

B. Wick, Department of Mathematics, Washington University--St. Louis, St. Louis, MO 63130-4899 USA

\smallskip

{\it E-mail}: \texttt{wick@math.wustl.edu}

\vspace{0.3cm}
}

\end{document}